\renewcommand{\d}{\mathrm{d}}
\newcommand{\D}{\mathrm{D}}
\renewcommand{\i}{\mathrm{i}}
\newtheorem{Thm}{Theorem}[section]
\newtheorem{Lem}[Thm]{Lemma}
\newtheorem{Prop}[Thm]{Proposition}
\newtheorem{Cor}[Thm]{Corollary}
\newtheorem{Rem}[Thm]{Remark}
\newtheorem{Def}[Thm]{Definition}
\newtheorem{Con}[Thm]{Conjecture}
\newtheorem{Ex}[Thm]{Example}
\newtheorem{Fact}[Thm]{Fact}
\newtheorem{Nota}[Thm]{Notation}
\newtheorem*{MThm}{Main Theorem}
\def\R{\mathbb{R}}
\def\Q{\mathbb{Q}}
\def\N{\mathbb{N}}
\def\C{\mathbb{C}}
\def\Z{\mathbb{Z}}
\def\T{\mathbb{T}}
\def\to{\longrightarrow}
\def\cA{\mathcal{A}}
\def\cB{\mathcal{B}}
\def\cD{\mathcal{D}}
\def\cE{\mathcal{E}}
\def\cH{\mathcal{H}}
\def\cM{\mathcal{M}}
\def\cP{\mathcal{P}}
\def\cU{\mathcal{U}}
\def\cW{\mathcal{W}}
\def\a{\alpha}
\def\b{\beta}
\def\c{\gamma}
\def\D{\Delta}
\def\d{\delta}
\def\l{\lambda}
\def\L{\Lambda}
\def\t{\tau}
\def\W{\Omega}
\def\w{\omega}
\def\ze{\zeta}
\def\fb{\mathfrak{b}}
\def\sl{\mathfrak{sl}}
\def\g{\mathfrak{g}}
\def\fh{\mathfrak{h}}
\def\fq{\mathfrak{q}}
\def\ox{\otimes}
\def\o+{\oplus}
\def\bo+{\bigoplus}
\def\x{\times}
\def\p[#1,#2]{\phi_{#1,#2}}
\def\til[#1]{\widetilde{#1}}
\def\what[#1]{\widehat{#1}}
\def\ba{\mathbf{a}}
\def\bb{\mathbf{b}}
\def\bc{\mathbf{c}}
\def\bC{\mathbf{C}}
\def\bu{\mathbf{u}}
\def\bv{\mathbf{v}}
\def\bx{\mathbf{x}}
\def\be{\mathbf{e}}
\def\bE{\mathbf{E}}
\def\bf{\mathbf{f}}
\def\bF{\mathbf{F}}
\def\bi{\mathbf{i}}
\def\bK{\mathbf{K}}
\def\bp{\mathbf{p}}
\def\bU{\mathbf{U}}
\def\bii{\underline{\mathbf{i}}}
\def\z[#1]{z_{#1}}
\def\oo{\infty}
\def\=>{\Longrightarrow}
\def\inj{\hookrightarrow}
\def\<{\langle}
\def\>{\rangle}
\def\corr{\longleftrightarrow}
\def\^{\wedge}
\def\+{\dagger}
\def\iff{\Longleftrightarrow}
\def\sub{\subset}
\def\inv{^{-1}}
\def\dis{\displaystyle}
\def\over[#1]{\overline{#1}}
\def\vec[#1]{\overrightarrow{#1}}
\def\mat[#1, #2]{\left[\begin{array}{ccccc}#1\end{array}\left|\begin{array}{c}#2\end{array}\right.\right]}
\def\xto[#1]{\xrightarrow{#1}}
\def\dd[#1,#2]{\frac{d#1}{d#2}}
\def\del[#1,#2]{\frac{\partial #1}{\partial #2}}
\def\pd{\partial}
\def\Facts[#1]{\begin{Fact}\mbox{}\begin{itemize}#1\end{itemize}\end{Fact}}
\def\Notation[#1]{\begin{Nota}\mbox{}\begin{itemize}#1\end{itemize}\end{Nota}}
\def\tab{\;\;\;\;\;\;}
\newcommand{\veca}[2][cccccccccccccccccccccccccccccccccccccccccc]{\left(\begin{array}{#1}#2 \\ \end{array} \right)}
\newcommand{\Eq}[1]{\begin{align}#1\end{align}}
\newcommand{\Eqn}[1]{\begin{align*}#1\end{align*}}
\newcommand{\case}[2][lllllllllllllllllllllllllll]{\left\{\begin{array}{#1}#2 \\ \end{array}\right.}
\begin{document}
\title[Positive representations of $\cU_q(\g_\R)$]{Positive representations of split real simply-laced quantum groups}
\dedicatory{Dedicated to Igor Frenkel on his 60th birthday}
\author{Ivan C.H. Ip}
\thanks{Department of Mathematics, Hong Kong University of Science and Technology, Hong Kong}
\thanks{Email: ivan.ip@ust.hk}
\thanks{\textbf{2010 Mathematics Subject Classification:} 17B37, 81R50}

\numberwithin{equation}{section}
\maketitle

\begin{abstract}
We construct the positive principal series representations for $\cU_q(\g_\R)$ where $\g$ is of simply-laced type, parametrized by $\R_{\geq 0}^r$ where $r$ is the rank of $\g$. We describe explicitly the actions of the generators in the positive representations as positive essentially self-adjoint operators on a Hilbert space, and prove the transcendental relations between the generators of the modular double. We define the modified quantum group $\bU_{\fq\til[\fq]}(\g_\R)$ of the modular double and show that the representations of both parts of the modular double commute weakly with each other, there is an embedding into a quantum torus algebra, and the commutant contains its Langlands dual. 
\end{abstract}

\setcounter{tocdepth}{1}
\tableofcontents
\newpage
\section{Introduction}\label{sec:intro}

To any finite-dimensional complex simple Lie algebra $\g$, Drinfeld \cite{D} and Jimbo \cite{J} defined a remarkable Hopf algebra $\cU_q(\g)$ known as the quantum group. Its representation theory has evolved into an important area with many applications in various fields of mathematics and physics (cf. \cite{EGGHS} and reference therein).

In classical Lie theory, one is interested in certain real subalgebras of $\g$ known as \emph{real forms}, two important cases being $\g_c$ corresponding to compact groups (e.g. $SU(n)$), and $\g_\R$ corresponding to split real groups (e.g. $SL(n,\R)$). The finite-dimensional representation theory in the compact case is well behaved, and it is generalized nicely to the corresponding quantum group $\cU_q(\g_c)$. However, on the contrary, representation theory in the split real case is much more complicated as was shown by the monumental works of Harish-Chandra. Its generalization to the quantum group level - involving self-adjoint operators on Hilbert spaces -- is physically more relevant, but is still largely open due to various analytic difficulties coming from noncompactness and the use of unbounded operators.

\subsection{Positive representations}
In this second paper of the series, we give the construction of the \emph{positive principal series representations}, or \emph{positive representations} for short, of the modular double $\cU_{q\til[q]}(\g_\R)$ of the split real quantum group for simply-laced $\g$, generalizing our first work \cite{FI} where the positive representations for the modular double $\cU_{q\til[q]}(\sl(n,\R))$ are constructed for the first time. This result strengthens the perspectives discussed in \cite{FI} for a new direction of representation theory of split real quantum groups since the discovery of the concept of the modular double for quantum groups \cite{Fa}, and in the case of $\cU_{q\til[q]}(\sl(2,\R))$, the special class of representations studied by Ponsot--Teschner \cite{PT1}.

Let us specialize the quantum parameter $q$ to be $q=e^{\pi \bi b^2}\in\C$, where $b^2\in\R\setminus\Q$ and $0<b<1$. Let $E_i,F_i,K_i$ be the generators of $\cU_q(\g_\R)$ with the standard quantum relations. Similarly let $\til[E_i],\til[F_i],\til[K_i]$ be the generators of $\cU_{\til[q]}(\g_\R)$ by replacing $b$ with $b\inv$, where $\til[q]=e^{\pi \bi b^{-2}}$. Furthermore, denote the rescaled variables by
\Eq{\label{rescale}
e_i:=2\sin(\pi b^2)E_i, \tab f_i:=2\sin(\pi b^2)F_i}and similarly for $\til[e_i]$ and $\til[f_i]$ with $b$ replaced by $b\inv$.

In this paper we construct the \emph{positive representations} for the split real form $\cU_q(\g_\R)$, which have the following remarkable properties:
\begin{itemize}
\item[(i)] the generators $e_i,f_i,K_i^{\pm1}$ and $\til[e_i],\til[f_i],\til[K_i]^{\pm1}$ are represented by positive essentially self-adjoint operators on a Hilbert space;
\item[(ii)] the generators satisfy the transcendental relations (as positive operators on the same domain)
\Eq{\label{transcendenal}e_i^{\frac{1}{b^2}}=\til[e_i],\tab f_i^{\frac{1}{b^2}}=\til[f_i],\tab K_i^{\frac{1}{b^2}}=\til[K]_i}
\end{itemize}
Furthermore, let $\be_i,\bf_i,\bK_i^{\pm 1}$ (similarly for the tilde variables) be the modified generators obtained from multiplication by certain factors of the $K_i$ (see Definition \ref{modified}). Then we also obtain the compatibility of the positive representations with the modular double $\bU_{\fq\til[\fq]}(\g_\R)$:
\begin{itemize}
\item[(iii)]  the modified generators $\be_i,\bf_i,\bK_i^{\pm 1}$ commute weakly with $\til[\be]_i,\til[\bf]_i,\til[\bK]_i^{\pm 1}$.
\end{itemize}

In the case of $SL(n,\R)$, there are two natural coordinate systems on the totally positive unipotent semi-subgroup $U_{>0}^+$. These are the Lusztig's data parametrized by a given choice of reduced expression of the longest element $w_0$, and the cluster coordinates given by the determinants of the square submatrices. In this paper, we choose the Lusztig data as the coordinates of the totally positive unipotent subspace, since the exchange relations for the Lusztig coordinates are more explicit than the cluster coordinates given by the generalized minors for arbitrary type $\g$. The transformation between the coordinates corresponding to different reduced expressions of the longest element $w_0$ can be written explicitly in Lusztig coordinates. 

\subsection{Main results}

The main results of the paper are the following: 
\begin{MThm} There exists a family of irreducible representations $\cP_\l\simeq L^2(\R^N)$ of $\cU_q(\g_\R)$ and its (modified) modular double $\bU_{\fq\til[\fq]}(\g_\R)$, where $N=\dim (U_{>0}^+)=l(w_0)$, parametrized by $\l\in\R^r$ where $r=rank(\g)$, satisfying properties (i)--(iii) above. 
\end{MThm}

More precisely, for every reduced expression for $w_0$ we can construct, explicitly, the positive representations. For each change of words of $w_0$, we establish the following unitary transformation, so that in particular the family of positive representation is \emph{canonical}, where it is independent of choice of the reduced expression of $w_0$.

\begin{Thm}\label{Thmtrans}For a change of words $(...,i,j,i,...)\corr (...,j,i,j,...)$ with the corresponding variables $(...u,v,w...)$ of the Hilbert space $L^2(\R^N)$, the generator of $\cU_q(\g_\R)$ acting as an operator $X$ on $L^2(\R^N)$ is transformed unitarily by
\Eq{X\corr \Phi X\Phi\inv,}
where
\Eq{\Phi = T\circ M,}
is a unitary transformation on $L^2(\R^N)$ with
\Eq{
M=g_b(e^{\pi b(2p_w-2p_u+u-v+w)})\circ g_b^*(e^{\pi b(2p_w-2p_u-u+v-w)})} a unitary operator expressed in terms of the functional calculus applied to the quantum dilogarithm function $g_b$ and its complex conjugate, while $T$ is a linear transformation on the variables and has determinant 1.
\end{Thm}

Since the transformation is unitary, it suffices to show the commutation relations, the positivity and the transcendental relations for a specific reduced expression of $w_0$. In particular, by choosing a ``good" reduced expression for $w_0$, the above properties follow immediately.

On the other hand, by choosing the expression for $w_0$ in a particular way, we have the folloing theorem.
\begin{Thm} The positive representations for type $D_n$, $n\geq 4$ and $E_6$, $E_7$, $E_8$ are constructed explicitly. In particular, the minimal principal series representations for the \emph{classical} $\cU(\g_\R)$ in terms of finite difference operators can be read off from the expressions.
\end{Thm}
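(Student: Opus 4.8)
The plan is to turn the existence result behind the Main Theorem into explicit formulas by reducing everything to a single, well-chosen reduced expression for each type. Fix $w_0 = s_{i_1}\cdots s_{i_N}$, where $N$ is the number of positive roots, and work in the Lusztig coordinates $u_1,\dots,u_N$ on the positive unipotent subgroup. The base observation is that the Chevalley generator attached to the first letter $s_{i_1}$ acts by a single positive exponential operator, while each $K_i$ acts by the exponential of a linear form in the conjugate momenta; so one generator is essentially free, and the whole task is to transport this simple action across the Dynkin diagram and record all the operators $E_i, F_i, K_i^{\pm1}$ in one common coordinate system. By Theorem~\ref{Thmtrans} the answer does not depend on the chosen word, so we are free to pick the most convenient one.

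First I would select, for $D_n$ and for each of $E_6, E_7, E_8$, one carefully chosen ``good'' reduced expression of $w_0$, arranged so that each simple root recurs in the word as rarely as possible, which keeps every generator $E_i$ a sum of only a few exponential monomials, the number of terms being governed by the local geometry of the Dynkin diagram at node $i$. This choice is the main creative input: a careless word forces a long chain of braid moves and an unmanageable proliferation of terms, whereas a good word makes the formulas essentially as simple as the $A_n$ case treated in \cite{FI}, with corrections localized at the trivalent branch node.

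Next I would generate the remaining generators by iterating the change-of-word transformation $X \mapsto \Phi X \Phi^{-1}$ of Theorem~\ref{Thmtrans}, each step realized by conjugation with the unitary $\Phi = T\circ g_b(\cdot)\,g_b^*(\cdot)$. Composing these conjugations along the sequence of braid moves that links the good word to the one exhibiting the simple base action yields the explicit operators. For the infinite family $D_n$ I would set up an induction in $n$, reusing the $A_{n-1}$ expressions and computing only the modification introduced by the fork; for $E_6, E_7, E_8$ I would carry out the computation individually and list the resulting operators. Crucially, positivity, the quantum Serre relations and the transcendental relations \eqref{trans} need no separate proof: they hold by the Main Theorem and are preserved under the unitary conjugations, so the entire content of the theorem is the bookkeeping of the final formulas.

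Finally, the classical statement is read off from the same formulas. The quantum generators are already finite difference operators, since exponentiated momenta act as shifts in the imaginary direction, and as $b \to 0$ the quantum-dilogarithm dressing trivializes while the exponential prefactors survive, leaving a system of finite difference operators that one recognizes as the principal series representation of the classical $U(\g_\R)$. The main obstacle is purely the combinatorial and computational weight of the exceptional cases: $E_8$ has $120$ positive roots, so even with an optimal reduced word the generators carry many terms and the transformation chains are long. Controlling this growth, and organizing the computation so that the branch node is handled once and then propagated, is where the real effort lies, rather than in any conceptual difficulty.
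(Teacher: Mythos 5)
Your proposal follows essentially the same route as the paper: choose a ``good'' reduced word (the paper takes $w_0=w_{t_1}w_{t_2}\cdots w_{t_n}$ built from nested longest elements of growing connected sub-diagrams), transport the one-term action of each $E_i$ into the common coordinate system by iterating the braid-move unitaries $\Phi$ of Theorem \ref{Thmtrans} (for this word only the simple rule \eqref{rule} ever occurs, which is what keeps the term count linear), obtain $F_i,K_i$ from Propositions \ref{F} and \ref{H}, derive the $D_n$ formulas by induction and the $E_6,E_7,E_8$ ones case by case, and read off the classical principal series via the identification in Corollary \ref{classreplimit}. The only cosmetic deviations are that the simple action of $E_i$ belongs to words \emph{ending} (not beginning) in $s_i$, and that the paper's criterion for a good word is the nested-subdiagram structure (which guarantees only rule \eqref{rule} is used) rather than minimizing letter multiplicities, which controls the size of $F_i$ rather than $E_i$; neither affects correctness.
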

The general expression for type $D_n$ is given in Theorem \ref{ThmDn}, while the explicit expressions in type $E_n$ can be found in the Appendix (reproduced from \cite{Ip3}).

Furthermore, as in the type $A_n$ case \cite{FI}, by using the modified version $\bU_{\fq\til[\fq]}(\g_\R)$ of the modular double we have the following important properties.
\begin{Thm}\label{Thmqtori}
We have an embedding
\Eq{\bU_{\fq}(\g)\inj\T_{\fq}^{N}.}
of the modified quantum group into the Laurent polynomials generated by $N$ $q$-tori, where $N=l(w_0)=dim(U_{>0}^+)$ with certain coefficients. Furthermore, representing the $q$-tori using the canonical position and momentum operators, we recover the positive representation of the modular double $\bU_{\fq\til[\fq]}(\g_\R)$.
\end{Thm}
\begin{Thm} \label{ThmLL} The generators of the Langlands dual group $\bU_{\til[\fq]}({}^L\g_\R)$, obtained by adjoining to $\bU_{\til[\fq]}(\g_\R)$ the fractional powers of its Cartan generators, commute weakly with the generators of $\bU_{\fq}(\g_\R)$ under the positive representations.
\end{Thm}

Finally, in the positive representations of $\cU_{q\til[q]}(\sl(2,\R))$, it is shown in \cite{PT2} that the representations corresponding to the parameters $\l$ and $-\l$ are equivalent under certain transformations involving multiplications by the quantum dilogarithms. In the general case there is a natural action of the Weyl group $W$ on the parameters $\l\in\R^r$, where $r$ is the rank of $\g$. Then we have the following result:
\begin{Thm}\label{RW} The positive representations corresponding to the parameters $\l$ and $w(\l)$ where $w\in W, \l\in\R^r$ are unitarily equivalent. In particular the positive representations are parametrized by $\l\in\R_{\geq 0}^r\simeq \R_{\geq 0}P^+$,  the cone of the $\R_{\geq0}$-span of dominant weights.
\end{Thm}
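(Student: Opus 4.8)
The plan is to reduce the statement to the rank-one case treated in \cite{PT2}. Since the Weyl group $W$ is generated by the simple reflections $s_1,\dots,s_r$ and unitary equivalence is transitive, it suffices to exhibit, for each simple reflection $s_k$, a unitary operator $\mathcal{I}_k$ intertwining the positive representation at $\l$ with the one at $s_k(\l)$; an arbitrary $w=s_{k_1}\cdots s_{k_m}$ is then handled by composing the corresponding intertwiners along the chain $\l\sim s_{k_m}(\l)\sim\cdots\sim w(\l)$. Because $s_k(\l)=\l-\<\l,\a_k^\vee\>\a_k$ differs from $\l$ only in the $\a_k$-direction, one expects $\mathcal{I}_k$ to act nontrivially only through the variable attached to the root $\a_k$.

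To make this precise I would first invoke Theorem \ref{Thmtrans}: since the family of positive representations is independent (up to unitary equivalence) of the reduced expression of $w_0$, I am free to choose a reduced word $w_0=s_k s_{i_2}\cdots s_{i_N}$ beginning with $s_k$. For such an adapted word the first Lusztig variable, say $x_1$, is attached to the simple root $\a_k$, and the Chevalley generators $E_k,F_k,K_k$ together with the parameter $\l_k:=\<\l,\a_k^\vee\>$ act on $x_1$ exactly as in the positive representation of the embedded $U_q(\sl(2,\R))$ subalgebra. In other words, restricting attention to the $x_1$-dependence, the representation factors through the rank-one positive principal series with parameter $\l_k$.

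The rank-one intertwiner is then supplied by \cite{PT2}: there is an explicit unitary operator, built as a product/convolution of quantum dilogarithms $g_b$ (analogous in shape to the operator $\Phi$ of Theorem \ref{Thmtrans}), implementing the equivalence $\l_k\mapsto-\l_k$ for $U_{q\til[q]}(\sl(2,\R))$. I would lift this operator to $\mathcal{I}_k$ acting on the full space $L^2(\R^N)$ through the single variable $x_1$, and check that conjugation by $\mathcal{I}_k$ sends $E_k,F_k,K_k$ at parameter $\l$ to the same operators at parameter $s_k(\l)$, which by construction is precisely the rank-one statement.

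The main obstacle is the compatibility of $\mathcal{I}_k$ with the remaining generators $E_j,F_j,K_j$ for $j\neq k$, in particular for the neighbours $j$ of $k$ in the Dynkin diagram. These operators depend on $x_1$ through the coupling dictated by the quantum Serre relations, so one must verify that conjugating them by the single-variable unitary $\mathcal{I}_k$ reproduces their form at $s_k(\l)$ rather than introducing spurious terms. I expect this to follow from the explicit difference-operator expressions: the $\l$-dependence of the representation is concentrated in a controlled way (essentially in the generators carrying the $\a_k$-weight), and the pentagon and inversion identities for $g_b$, together with the already-established word-change relations of Theorem \ref{Thmtrans}, should absorb the cross terms. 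Carrying out this interaction explicitly, using the good reduced expression so that the coupling is as simple as possible, is the technical heart of the argument; once it is done, positivity and self-adjointness of the transformed operators are automatic since $\mathcal{I}_k$ is unitary, and restricting to a fundamental domain for the $W$-action gives the parametrization by $\l\in\R_{\geq 0}^r$.
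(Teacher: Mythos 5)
Your overall route coincides with the paper's own proof: reduce to simple reflections, invoke Theorem \ref{Thmtrans} to pass to a reduced word for $w_0$ beginning with $s_k$, and conjugate by a unitary operator acting through the single leftmost variable $u$, built from the rank-one quantum dilogarithm intertwiner of \cite{PT2}. The paper's operator is exactly of this shape,
\Eq{B=(u\mapsto u-\l_k)\circ \frac{g_b(e^{2\pi b(u+\l_k)})}{g_b(e^{2\pi b(u-\l_k)})}e^{-2\pi i\l_k u}\circ (u\mapsto u-\l_k),}
so the intertwiner you propose is the right one.

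There is, however, a genuine error in the claim your verification plan rests on. It is \emph{not} true that for the adapted word the generators $E_k,F_k,K_k$, restricted to their dependence on the leftmost variable, form an embedded rank-one positive representation: in general $E_k$ does not involve that variable at all. Concretely, in type $A_2$ with $w_0=s_2s_1s_2$ (so $k=2$ and the leftmost variable is $u_2^2$), one has $E_2=[u_2^1]e(-p_2^1)$, which contains neither $u_2^2$ nor $p_2^2$, while the term $[u_2^2]e(-p_1^1+p_2^1-p_2^2)$ that the intertwiner must preserve sits inside $E_1$. In general the rank-one pair is split between two different generators: $F_k$ carries the unique term $[-2\l_k-u]e(p_u)$ involving $p_u$, and a single generator $E_m$ --- with $m\neq k$ in general, as in the example --- carries the unique term of the form $[u]e(-p_u+\cdots)$. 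Consequently your step ``conjugation sends $E_k,F_k,K_k$ at $\l$ to the same operators at $s_k(\l)$, which by construction is precisely the rank-one statement'' does not hold by construction: the check for $E_k$ is vacuous, and half of the rank-one computation is hidden among the generators $j\neq k$ that you set aside as the remaining obstacle. Conversely, the part you expect to be the technical heart --- the neighbours $F_j$ --- requires no dilogarithm identities at all: those operators involve $u$ only multiplicatively (no $p_u$), so conjugation by $B$ reduces to the net shift $u\mapsto u-2\l_k$, which turns the weight $[-2\l_j+u+\cdots]$ into $[-2(\l_j+\l_k)+u+\cdots]$, i.e.\ exactly the Weyl action $s_k(\l_j)=\l_j+\l_k$; non-neighbours are untouched. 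Once the roles are reassigned in this way --- preserve the $[u]e(-p_u+\cdots)$ term of $E_m$, flip the sign of $\l_k$ in $F_k$ and $K_k$ via the dilogarithm factor, and let the shift handle everything else --- the argument closes, and this is precisely what the paper's proof does.
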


\subsection{Recent progress and future directions}

There are several problems yet to be answered. A natural question is whether these representations can be generalized to $\cU_q(\g_\R)$ of arbitrary type. In the third paper of this series \cite{Ip4}, we construct the positive representations for the non-simply-laced type, where it turns out that the transcendental relations play a crucial role relating its Langlands dual. 

In \cite{Ip} we prove a Peter--Weyl type theorem in the case of $\cU_{q\til[q]}(\sl(2,\R))$. It is shown that under the left and right regular representations of $\cU_{q\til[q]}(\sl(2,\R))$, the Hilbert space $L^2(SL_q^+(2,\R))$, suitably defined using the Gelfand--Naimark--Segal (GNS) construction, decomposes into a direct integral of tensor products of the positive representations $\cP_\l$, with the Plancherel measure given by the quantum dilogarithms. In general, the Haar functional needed to construct the $L^2$ space structure $L^2(G_q^+)$ of (the modular double of) the quantized function space $F_{q\til[q]}(G^+)$ is suggested in \cite{Ip2}. Together with the remark after Theorem \ref{transLi}, one can ask the following
\begin{Con}\label{conL2}
The space $L^2(G_q^+)$ is decomposed into (a direct integral) of tensor products of the positive representations as the left and right regular representations of $\cU_{q\til[q]}(\g_\R)$:
$$L^2(G_q^+)\simeq \int_{\R_{\geq 0}^N}^{\o+} \cP_\l\ox \cP_\l d\mu(\l)$$
for some Plancherel measure $d\mu$.
\end{Con} 

In \cite{PT2} it is shown that the class of positive representations $\cP_\l$ for $\cU_{q\til[q]}(\sl(2,\R))$ is closed under the tensor product (as a direct integral), with the same Plancherel measure appearing in the Peter--Weyl theorem. In particular the positivity and transcendental relations are preserved under the tensor product. A natural question for general $\g$ is then the following
\begin{Con}\label{contensor}The class of positive representations is closed under tensor product (as a direct integral):
$$\cP_\a\ox \cP_\b\simeq \int_{\R_{\geq 0}^N}^{\o+} M_{\a\b}^\c \ox \cP_\c d\mu(\c)$$
for some measure $d\mu$  and some multiplicity space $M_{\a\b}^\c$.
\end{Con}

The research direction from the problem of the closure of $\cP_\l$ under taking tensor product leads to the study of its \emph{positive Casimir} operators \cite{Ip7} and the semiclassical limit of the Clebsch--Gordan coefficients \cite{Ip6}. A promising direction is the recent results of the cluster algebraic realization of $\cU_q(\g)$ via the positive representations in terms of certain quantum torus algebra \cite{Ip8} related to quantization of higher Teichm\"uller theory \cite{FG}. It was pointed out by Schrader and Shapiro that in this notion the positive Casimir elements can be represented as certain monodromies around punctures of some surface, which (at least in type $A_n$) has a meaning of quantum Hamiltonian operators of certain $q$-Toda integrable systems, and is expected to be simultaneously diagonalizable \cite{SS}.

In a separate publication \cite{Ip5}, we constructed the universal $R$ operator for arbitrary type $\cU_q(\g_\R)$, giving it the braiding structure. Together with Conjecture \ref{contensor} about the tensor product, the class of positive representations will form a new \emph{``continuous" braided tensor category}, and it is envisioned in \cite{FI} that this will lead to future applications comparable to the development of the finite-dimensional representation theory of compact quantum groups since its discovery by Drinfeld and Jimbo over 30 years ago.

The properties of being \emph{positive self-adjoint} become central in further analysis, where they are needed to define complex powers of the generators in order to do harmonic analysis on the $C^*$-algebraic level, its relation to multiplier Hopf algebra, the construction of the universal $R$ operator and a notion of a continuous canonical basis \cite{Ip1}. On the other hand, the \emph{transcendental relations} are also important in understanding the interplay between the modular double, which is again united in the $C^*$-algebraic setting. Furthermore, these properties allow us to employ the powerful \emph{quantum dilogarithm function} for various constructions. These aspects distinguish positive representations as a remarkable class from other integrable representations, and have been partially investigated in \cite{Ip, Ip2,Ip5}.

Finally, we would like to remark that there are other ways to deform the principal series of representations even associated with the same minimal parabolic subalgebra $\fb_\R^+\subset\g_\R$. For example, a class of representations of $\cU_{q}(\g)$ has been constructed by Gerasimov et al. using the quantum tori variables $\{u_i,v_i\}$ (see \cite{GKLO} and references therein). However, the generators in their work do not seem to be represented by positive self-adjoint operators, and the transcendental relations characterizing the positive representations do not seem to hold. On the other hand, our representations of the lower Borel part $\cU_q^-$ generated by $F_i$, under the embedding in Theorem \ref{Thmqtori}, essentially coincide with what is called the Feigin map (see e.g. \cite{R}), thus the construction of the positive representations also extends the Feigin map to the whole quantum group. However, as in the work by Teschner \textit{et al.} on $\cU_{q\til[q]}(\sl(2,\R))$, it is clear from the construction that there is no classical $b\to 0$ limit, hence the class of positive representations distinguishes itself from being the usual $q$-deformation of classical representations.

\subsection{Organization of the paper}
The paper is organized as follows. In Section \ref{sec:prelim} we recall the definition of the quantum group $\cU_q(\g_\R)$ and Lusztig's parametrization for a positive unipotent semigroup $U_{>0}^+$, and the transformation between the coordinate corresponding to the change of words for $w_0$. In Section \ref{sec:def} we give the construction for $\cU_q(\g_\R)$ on a specific choice of reduced expression of $w_0$. In Section \ref{sec:qdilog} we recall the definition and properties of the quantum dilogarithm function needed in Section \ref{sec:trans}, where we prove Theorem \ref{Thmtrans} by defining the unitary transformation bringing together the action of $\cU_q(\g_\R)$ for any choice of expression for $w_0$. In Section \ref{sec:com} we prove the commutation relations between the generators for simply-laced type $\cU_q(\g_\R)$. In Sections \ref{sec:An} and \ref{sec:Dn} we write the action for $\cU_q(\sl(n,\R))$ and $\cU_q(\mathfrak{so}(n,n))$ explicitly in Lusztig coordinates. In Section \ref{sec:E} we give explicit results of the calculations for type $E_6$,$E_7$ and $E_8$. In Section \ref{sec:modified} we recall the modified quantum group $\bU_{\fq\til[\fq]}(\g_\R)$ defined in \cite{FI} and state the main theorems about the positive representations of the modular double, its embedding into the $q$-tori, and the Langlands dual inside the commutant. In Section \ref{sec:weylaction} we prove the unitary equivalence between positive representations with parameters related by Weyl actions. Finally in Section \ref{sec:conjectures} we give some remarks on the possible approaches to the conjectures stated in the introduction.

\vspace{10mm}

\textbf{Acknowledgments.} I would like to dedicate this work to my advisor Professor Igor Frenkel, who has enlightened me in this very beautiful area of mathematics, for all his support and guidance over the years at Yale University. I would like to thank both referees for their constructive comments to the manuscript in particular to the handling of unbounded operators. This work was partially supported by World Premier International Research Center Initiative (WPI Initiative), MEXT, Japan and JSPS KAKENHI
Grant Numbers JP16K17571.

\section{Definition of $\cU_q(\g_\R)$ and Lusztig data}\label{sec:prelim}
Throughout the paper we denote $\bi:=\sqrt{-1}$. Let $\g$ be a simple Lie algebra over $\C$ of simply-laced type. Let $I=\{1,2,...,r\}$ denote the set of nodes of the Dynkin diagram of $\g$ with $r=rank(\g)$. Let $W$ denote the Weyl group and $w_0\in W$ the longest element. We let $N=l(w_0)=dim(U_{>0}^+)$ (cf. Definition \ref{U+}), and we call a tuple
\Eq{\bii=(i_1,...,i_N)\in I^N}
a reduced word if $w_0=s_{i_1}s_{i_2}...s_{i_N}$ is a reduced expression of $w_0$. Finally we let $A=(a_{ij})$ be the Cartan matrix of $\g$, such that $i,j$ are connected in the Dynkin diagram whenever $a_{ij}=-1$.

\begin{Def}[\cite{D,J}]
The Drinfeld--Jimbo quantum group $\cU_q(\g)$ is the Hopf algebra generated by $\{E_i,F_i,K_i^{\pm1}\}_{i\in I}$ over $\C$ subject to the relations for $i,j\in I$,
\Eq{
K_iE_j=q^{a_{ij}}E_jK_i,\tab K_iF_j=q^{-a_{ij}}F_jK_i,\tab
{[E_i,F_j]} = \d_{ij}\frac{K_i-K_i\inv}{q-q\inv},
}
together with the Serre relations for $i\neq j$ and $X=E,F$,
\Eq{
\case{X_i^2X_j - (q+q\inv)X_iX_jX_i + X_jX_i^2=0,& a_{ij}=-1,\\ X_iX_j=X_jX_i&a_{ij}=0.}\label{SerreE}
}

The Hopf algebra structure of $\cU_q(\g)$ is given by (we will not need the counit and antipode in this paper)
\Eq{
\label{coprodE}\D(E_i)=&1\ox E_i+E_i\ox K_i,\\
\label{coprodF}\D(F_i)=&K_i^{-1}\ox F_i+F_i\ox 1,\\
\label{coprodK}\D(K_i)=&K_i\ox K_i.
}
The split real quantum group $\cU_q(\g_\R)$ is a real form of $\cU_q(\g)$, which is a Hopf-* algebra \cite{T} equipped in addition with a star structure defined by
\Eq{E_i^*=E_i,\tab F_i^*=F_i,\tab K_i^*=K_i.}
together with $q^*=q\inv$, which forces $|q|=1$. We will assume $q$ is not a root of unity.
\end{Def}

We recall the description of Lusztig data for the positive unipotent subgroup $U_{>0}^+$, given in detail in \cite{Lu}. 
\begin{Def}\label{U+}Let $G$ be the real simple Lie group corresponding to the split real form $\g_\R$ of the Lie algebra $\g$, such that it has a split real maximal torus $T$ and two opposite Borel subgroups $B^+, B^-$ containing $T$ with unipotent subgroups $U^+, U^-$. For any $i\in I$, there exists a homomorphism $SL_2(\R)\to G$ denoted by
\begin{eqnarray}
\veca{1&a\\0&1}&\mapsto& x_i(a)\in U_i^+,\\
\veca{b&0\\0&b\inv}&\mapsto &\chi_i(b)\in T,\\
\veca{1&0\\c&1}&\mapsto &y_i(c)\in U_i^-,
\end{eqnarray}
called the \emph{pinning} of $G$, where $U_i^+$ and $U_i^-$ are the simple root subgroups of the unipotent subgroups $U^+$ and $U^-$ respectively. Then the positive unipotent semigroup $U_{>0}^+$ is defined by the image of the map $\iota:\R_{>0}^N\to U^+$ given by
\Eq{\label{w0coord}\iota:(a_1,a_2,...,a_N)\mapsto x_{i_1}(a_1)x_{i_2}(a_2)...x_{i_N}(a_N).}
\end{Def}
\begin{Lem}[{\cite[Proposition 2.7]{Lu}}]\label{inj} The map $\iota:\R_{>0}^N\to U^+$ is injective: if 
$$x_{i_1}(a_1)x_{i_2}(a_2)...x_{i_N}(a_N)=x_{i_1}(a'_1)x_{i_2}(a'_2)...x_{i_N}(a'_N),$$
then $a_k=a'_k$ for every $k=1,...,N$.
\end{Lem}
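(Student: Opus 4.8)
The plan is to recover the factorization parameters $a_1,\dots,a_m$ from the single group element $g=x_{i_1}(a_1)\cdots x_{i_m}(a_m)$; once each $a_k$ is exhibited as a function of $g$, injectivity is immediate. Since $U^+$ is unipotent, $g$ carries the same information as $X=\log g\in\mathfrak{n}^+$, so one's first instinct is to read the $a_k$ off the root-space components of $X$. Grading $\mathfrak{n}^+=\bigoplus_{\beta>0}\mathfrak{g}_\beta$ by height and using that each $x_{i_k}(a_k)=\exp(a_k e_{i_k})$ raises weight by the simple root $\alpha_{i_k}$, the Baker--Campbell--Hausdorff expansion shows that the height-one part of $X$ is $\sum_j\bigl(\sum_{k:\,i_k=j}a_k\bigr)e_j$, all brackets contributing in height $\geq 2$. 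This already settles the case where every index occurs once, but in general it yields only the partial sums $\sum_{k:\,i_k=j}a_k$. Separating the contributions of a repeated index is precisely the crux, and the naive height grading cannot achieve it.

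To separate repeated letters I would pass to the fundamental representations. For each node $i$ let $V_{\omega_i}$ have extremal weight vectors $v_{w\omega_i}$, $w\in W$, with dual covectors $\xi_{w\omega_i}$, and let $\Delta_{u\omega_i,v\omega_i}(g)=\langle\xi_{u\omega_i},\,g\,v_{v\omega_i}\rangle$ be the associated generalized minors; these are regular functions on $G$, hence determined by $g$. The key computation is that $x_j(c)$ acts on an extremal weight vector \emph{upper-triangularly} with respect to the Bruhat order --- it fixes $v_{w\omega_i}$ modulo lower terms unless $s_jw<w$, in which case it contributes a factor linear in $c$. Propagating this along the word $(i_1,\dots,i_m)$, and using that the roots $s_{i_1}\cdots s_{i_{k-1}}(\alpha_{i_k})$ enumerate the positive roots exactly once, the chamber minors of the wiring diagram of $(i_1,\dots,i_m)$ factor as monomials in the $a_k$. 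Inverting this (triangular, unipotent) monomial change of variables --- the Chamber Ansatz of Fomin--Zelevinsky --- expresses each $a_k$ as an explicit subtraction-free rational function of the $\Delta_{u\omega_i,v\omega_i}(g)$, and so each $a_k$ is determined by $g$.

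The hard part will be establishing this recovery formula, since one must track how the successive factors $x_{i_k}(a_k)$ build up each chamber minor; this interlocks the combinatorics of the reduced word with the extremal-vector computation above. Here positivity enters in a mild but essential way: because each $a_k\in\R_{>0}$, all chamber minors are strictly positive, so the denominators in the recovery never vanish and the inversion is valid on the whole image. As a cleaner packaging I would then run an induction on $m$: having recovered $a_1$ from the appropriate ratio of minors, I left-multiply by $x_{i_1}(a_1)\inv$ to obtain a product over the word $(i_2,\dots,i_m)$, which is again reduced (any contiguous substring of a reduced word is reduced), and invoke the inductive hypothesis; the base case $m\leq 1$ is just the injectivity of the one-parameter root subgroup $c\mapsto x_i(c)$ coming from the pinning.
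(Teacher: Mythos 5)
Your strategy does prove the lemma, but it is a genuinely different --- and much heavier --- route than the paper's. The paper's proof really is the one-line appeal to the Bruhat decomposition relative to the opposite Borel $B^-=TU^-$: for $a\neq 0$ one has $x_i(a)\in B^-s_iB^-$ (a rank-one computation), and the Tits-system identity $B^-wB^-\cdot B^-s_iB^-=B^-ws_iB^-$, valid whenever $l(ws_i)=l(w)+1$, applies at every step precisely because the word is reduced; hence $x_{i_1}(a_1)\cdots x_{i_m}(a_m)\in B^-s_{i_1}\cdots s_{i_m}B^-$ for any nonzero parameters. If the two products in the statement coincide, left-multiply by $x_{i_1}(a_1)^{-1}$: the left side lands in the cell of $s_{i_2}\cdots s_{i_m}$, while the right side stays in the disjoint, strictly longer cell of $s_{i_1}\cdots s_{i_m}$ unless $a_1'=a_1$; so $a_1'=a_1$, and the same induction you use finishes the proof. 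This argument needs no positivity (nonvanishing suffices), no fundamental representations, and no inversion formulas. Your route, by contrast, recovers each $a_k$ explicitly via generalized minors; that is a legitimate known alternative (it is exactly how Berenstein--Fomin--Zelevinsky invert the parametrization) and it buys strictly more than injectivity --- a subtraction-free formula for the inverse map on the totally positive part, which is where positivity genuinely enters. The cost is that it outsources the real work to the BFZ factorization theory, which you yourself flag as ``the hard part''; as a self-contained proof of this lemma it is disproportionate.

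One intermediate claim is false as literally written: the chamber minors of $g$ itself are \emph{not} monomials in the $a_k$. Already in type $A_2$, $g=x_1(a)x_2(b)x_1(c)$ has $\Delta_{\{1\},\{2\}}(g)=a+c$. The monomial property holds for the minors of the \emph{twisted} element $y$ (the BFZ twist of $g$), and the Chamber Ansatz inverts that monomial map; the minors of $y$ are then re-expressed through (complementary) minors of $g$ by Jacobi's identity, which is why each $a_k$ is nevertheless a ratio of products of minors of $g$ --- in the example, $b=\Delta_{\{2\},\{3\}}(g)$, $a=\Delta_{\{1\},\{3\}}(g)/\Delta_{\{2\},\{3\}}(g)$, $c=\Delta_{\{1,2\},\{2,3\}}(g)/\Delta_{\{2\},\{3\}}(g)$. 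So your conclusion survives, but the inversion must pass through the twist (and, beyond type $A$, the wiring-diagram combinatorics must be replaced by the Fomin--Zelevinsky generalized-minor factorization formulas); without the twist there is no ``triangular monomial change of variables'' to invert.
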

\begin{Def} We define the totally positive semigroup 
\Eq{\label{gauss}
G_{>0}:=U_{>0}^- T_{>0} U_{>0}^+,
} where $U_{>0}^\pm$ is as above, and $T_{>0}$ are generated by the images $\chi_i(b)$ with $b\in\R_{>0}$.
\end{Def}
\begin{Lem} We have the following identities in $G_{>0}$: for $a,b,c\in \R_{>0}$ and $i,j\in I$,
\begin{eqnarray}
\label{EK}\chi_i(b)x_i(a)&=&x_i(b^2 a)\chi_i(b),\\
\label{EF}x_i(a)y_j(c)&=&y_j(c)x_i(a)\tab \mbox{ if $i\neq j$},\\
\label{EKF}x_i(a)\chi_i(b)y_i(c)&=&y_i(\frac{c}{ac+b^2})\chi_i(\frac{ac+b^2}{b})x_i(\frac{a}{ac+b^2}).
\end{eqnarray}

Assume $a_{ij}=-1$. Then we have
\begin{eqnarray}
\label{EK2}\chi_i(b)x_j(a)&=&x_j(b\inv a)\chi_i(b),\\
\label{121}x_i(a)x_j(b)x_i(c)&=&x_j(\frac{bc}{a+c})x_i(a+c)x_j(\frac{ab}{a+c}).
\end{eqnarray}
\end{Lem}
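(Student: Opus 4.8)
The plan is to observe that each of \eqref{EK}--\eqref{121} is an identity taking place inside a rank one or rank two subgroup of $G$, so that every one reduces to an explicit computation with $2\x2$ or $3\x3$ matrices transported through the pinning homomorphisms $\phi_i:SL_2(\R)\to G$. I would first dispose of the relations internal to a single $\phi_i(SL_2(\R))$, namely \eqref{EK} and \eqref{EKF}. Since $\phi_i$ is a homomorphism it suffices to check them among $\bigl(\begin{smallmatrix}1&a\\0&1\end{smallmatrix}\bigr)$, $\bigl(\begin{smallmatrix}b&0\\0&b\inv\end{smallmatrix}\bigr)$ and $\bigl(\begin{smallmatrix}1&0\\c&1\end{smallmatrix}\bigr)$. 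For \eqref{EK} this is immediate. For \eqref{EKF}, writing $D=ac+b^2$, one checks that the left hand side and the claimed right hand side both equal the matrix with rows $(b+acb\inv,\,ab\inv)$ and $(cb\inv,\,b\inv)$; this is just the Gauss factorization of a big-cell element of $SL_2$, whose uniqueness also pins down the stated coefficients. Applying $\phi_i$ yields both identities in $G$.

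Next, \eqref{EK2} (and \eqref{EK} once more) is the statement that the torus element $\chi_i(b)=\alpha_i^\vee(b)$ acts on the root subgroup $U_j^+$ by the character $a\mapsto b^{\langle\a_j,\alpha_i^\vee\rangle}a$. For $j=i$ the Cartan integer is $2$, recovering \eqref{EK}; when $\a_i,\a_j$ are joined by an edge in the simply-laced diagram the Cartan integer is $-1$, giving the exponent $b\inv$ in \eqref{EK2}. For \eqref{EF} with $i\neq j$ I would invoke Chevalley's commutator formula: the commutator of $x_i(a)\in U_{\a_i}$ with $y_j(c)\in U_{-\a_j}$ is a product of terms lying in the subgroups indexed by roots of the form $p\a_i-q\a_j$ with $p,q\geq1$. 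Since every root is an integral combination of simple roots with coefficients of a single sign, no such mixed combination is a root, the product is empty, and the two elements commute.

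Finally, \eqref{121} is the braid move underlying the change of reduced words, and here I would pass to the rank two subgroup generated by $\phi_i(SL_2(\R))$ and $\phi_j(SL_2(\R))$. For an edge in a simply-laced diagram the pair $\{\a_i,\a_j\}$ spans a subsystem of type $A_2$, and the pinning identifies $x_i(a),x_j(b)$ with the standard upper unipotents $I+aE_{12}$ and $I+bE_{23}$ of $SL_3$. A direct multiplication shows the left hand side equals the upper unitriangular matrix whose $(1,2),(2,3),(1,3)$ entries are $a+c$, $b$, $ab$, and the same holds for the right hand side after substituting $\tfrac{bc}{a+c},\,a+c,\,\tfrac{ab}{a+c}$, verifying \eqref{121}.

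The routine core throughout is small-matrix algebra; the one genuinely non-formal point is the rank two reduction for \eqref{EF} and \eqref{121}. One must know that $\{\a_i,\a_j\}$ generates an $A_2$ (respectively $A_1\x A_1$) subsystem and, more delicately, that the restriction of the global pinning to the subgroup it generates coincides with the standard rank two pinning up to Chevalley structure constants. Over $\R$ with positive parameters these constants can be normalized to $+1$, which is precisely what produces the clean coefficients appearing in \eqref{EKF} and \eqref{121}; keeping track of this normalization is the main thing to be careful about.
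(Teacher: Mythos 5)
Your proof is correct: the paper itself offers no proof of this lemma (it is recalled from Lusztig's total positivity paper \cite{Lu}), and your argument---reduction through the pinning homomorphisms to explicit $2\times 2$ and $3\times 3$ matrix computations for \eqref{EK}, \eqref{EKF}, \eqref{121}, the torus-character computation $\chi_i(b)x_j(a)\chi_i(b)\inv=x_j(b^{\<\a_j,\a_i^\vee\>}a)$ for \eqref{EK2}, and the Chevalley commutator formula for \eqref{EF}---is precisely the standard verification that the cited source carries out. One remark: the structure-constant normalization you single out as the delicate point is in fact immaterial for these particular identities, since \eqref{EKF} lives inside a single $\phi_i(SL_2(\R))$ and \eqref{121} never references the $\a_i+\a_j$ root subgroup explicitly; indeed, writing $[e_i,e_j]=Ne_{\a_i+\a_j}$ with $N=\pm1$, a short Baker--Campbell--Hausdorff computation shows both sides of \eqref{121} equal $\exp((a+c)e_i)\exp(be_j)\exp(-bcN e_{\a_i+\a_j})$ for either sign of $N$, so no normalization needs to be tracked.
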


\section{Construction of the positive representations}\label{sec:def}
We will first construct the action of the generators $E_i$, $F_i$, $K_i$ of $\cU_q(\g_\R)$ on the Hilbert space $L^2(\R^N)$ for a particular $i\in I$ using a specific choice of reduced word $\bii$. Then using Theorem \ref{trans} we can find the actions corresponding to an arbitrary choice of $\bii$ by a unitary transformation, hence in particular the positivity and the transcendental relations are preserved once we prove it for some $\bii$. Finally, we will prove in Section \ref{sec:com} that the actions really satisfy the defining commutation relations of $\cU_q(\g_\R)$.

First let us recall the classical construction, which corresponds to the induced representation $\mathrm{Ind}_{B^-}^G$ of the lower Borel (i.e. minimal parabolic) subgroup $B^-$, acting on the ring of smooth functions with compact support $C_0^\oo(G/B^-)$ on the flag variety. (See e.g. \cite[VII.3]{Kn}. For our purpose we do not need to consider the $L^2$ completion). It is known that the open big Bruhat cell $G^{w_0,w_0}/B^-$ is isomorphic to the unipotent group $U_+$. Since we are only interested in the infinitesimal action, we can restrict our action to functions $C_0^\oo(U_{>0}^+)$ of the positive chart.
\begin{Prop}\label{classical} The minimal principal series representation for $\cU(\g)$ can be realized as the infinitesimal action of $g\in G_{>0}$ acting on $C_0^\oo(U_{>0}^+)$ by
\Eq{g\cdot f(h) =\chi_\l(hg) f([hg]_+).}
Here we write the Gauss decomposition \eqref{gauss} of $g$ as
\Eq{g=g_-g_0g_+\in U_{>0}^-T_{>0}U_{>0}^+,}
so that $[g]_+=g_+$ is the projection of $g$ onto $U_{>0}^+$, and $\chi_\l(g)$ is the character function defined by
\Eq{\chi_\l(g):=\prod_{i=1}^r t_i^{2\l_i},}
where $\l=(\l_i)\in \C^r$ and $t_i=\chi_i\inv(g_0)\in \R_{>0}$.
\end{Prop}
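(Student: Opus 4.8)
The plan is to obtain the stated formula by starting from the abstract minimal principal series as an induced representation and transporting it to the unipotent model on $\C[U_{>0}^+]$ through the Gauss decomposition. Let $B^-=U^-T$ be the Borel opposite to $U^+$, and realize the minimal principal series as $\mathrm{Ind}_{B^-}^{G_\R}\chi_\l$, i.e.\ the space of functions $F$ on the big cell of $G_\R$ satisfying the left covariance
\Eq{F(u_-\,t\,g)=\chi_\l(t)\,F(g),\tab u_-\in U^-,\ t\in T,}
with $G_\R$ acting by the right regular representation $(R_a F)(g)=F(ga)$; this is a genuine left action because $R_aR_b=R_{ab}$, and it commutes with the left $B^-$-covariance, so it descends to the induced space. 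The word ``minimal'' refers to inducing from the minimal parabolic $B^-$.

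First I would identify the representation space with $\C[U_{>0}^+]$. The covariance law forces $F$ on the open dense cell $U^-TU^+$ to be determined by its restriction $f:=F|_{U^+}$ via $F(g_-g_0g_+)=\chi_\l(g_0)\,f(g_+)$, and conversely every $f$ extends uniquely; by Lemma~\ref{inj} the Lusztig parametrization of $U_{>0}^+$ is injective, so $f$ is an honest function of the coordinates. Here one must check that the ingredients are well defined on the positive locus: for fixed $h\in U_{>0}^+$ and $g$ near the identity, $hg$ stays in the big cell (an open condition), so the factors $[hg]_+$ and $(hg)_0$ exist, and positivity of the Gauss decomposition keeps the coordinates $u_i=\chi_i\inv((hg)_0)$ positive, so that $\chi_\l(hg)=\prod_i u_i^{2\l_i}$ is an unambiguous complex number for $\l\in\C^r$.

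Next I would transport the action. Restricting $R_aF$ to $U^+$ and using $F|_{U^+}=f$ gives
\Eq{(R_aF)(h)=F(ha)=\chi_\l\big((ha)_0\big)\,f\big((ha)_+\big)=\chi_\l(ha)\,f([ha]_+),}
which is precisely the asserted formula upon writing $a=g$. As an independent check that this is a bona fide action directly on $\C[U_{>0}^+]$, I would verify the two cocycle identities $\big[[hg_1]_+g_2\big]_+=[hg_1g_2]_+$ and $\chi_\l(hg_1)\,\chi_\l([hg_1]_+g_2)=\chi_\l(hg_1g_2)$; both follow from associativity of the Gauss decomposition, using that $\chi_\l$ is a multiplicative character of $T$ depending only on the torus factor and that conjugation by $T$ preserves $U^-$. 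Passing to the infinitesimal action, I would then differentiate $g\mapsto g\cdot f$ along the one-parameter subgroups $x_i(t)$, $y_i(t)$ and $\chi_i(e^{t})$; the pinning identities \eqref{EK}, \eqref{EKF} and \eqref{121} are exactly what reduce $h\,x_i(t)$, $h\,y_i(t)$ and $h\,\chi_i(e^{t})$ to Gauss-decomposed form, whence $E_i$, $F_i$ and the Cartan generators emerge as first-order differential operators on $\C[U_{>0}^+]$, with $\chi_\l$ contributing the $\l$-dependent zeroth-order terms.

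The main obstacle is the partial-definedness intrinsic to the Gauss decomposition: as a group action the formula lives only on the big cell, hence only densely, and the key point is that passing to the infinitesimal action dissolves this difficulty, since for each fixed $h$ there is a neighbourhood of the identity in $G_\R$ on which $hg$ remains in the big cell, making the generators globally defined differential operators. Ensuring simultaneously that everything stays inside the totally positive subsemigroups $U_{>0}^\pm$ and $T_{>0}$, so that the powers $u_i^{2\l_i}$ defining $\chi_\l$ are well defined, is the place where Lusztig's positivity enters, and this is precisely the feature that makes the present construction the correct classical template ($q\to1$) for the positive representations built in the remainder of the paper.
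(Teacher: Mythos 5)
Your proof is correct. Note that the paper itself offers no proof of this Proposition at all --- it is stated as a recollection of the classical construction --- so there is no argument to compare against line by line; what you have written is the standard justification that the paper implicitly relies on. Realizing the minimal principal series as $\mathrm{Ind}_{B^-}^{G_\R}\chi_\l$ and restricting covariant functions to the open cell $U^-TU^+$ reproduces the stated formula verbatim, and your two cocycle identities (which use that conjugation by $T$ preserves $U^-$ and that $\chi_\l$ is multiplicative on the torus factor) verify that the formula defines an action wherever the Gauss decomposition exists. Your closing observation is the essential one: the group-level formula is only densely defined and does not preserve $\C[U_{>0}^+]$ --- it produces rational functions and the powers $D_n^{2\l_i}$ --- whereas the infinitesimal action along $x_i(t)$, $y_i(t)$, $\chi_i(e^t)$ is by first-order differential operators with polynomial coefficients, with $\chi_\l$ supplying the $\l$-dependent zeroth-order terms. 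This is exactly what the paper then carries out concretely: Propositions \ref{E}, \ref{F} and \ref{H} compute these three infinitesimal actions using the pinning identities \eqref{EK}--\eqref{121}, in precise agreement with your differentiation step, so your argument dovetails with (and fills the gap before) the paper's subsequent computations.
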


Let $q=e^{\pi \bi b^2}$ with $b^2\in\R\setminus\Q$, $0<b<1$, let
\Eq{
[n]_q:=\frac{q^n-q^{-n}}{q-q\inv}}
be the $q$-number and denote $Q:=b+b\inv$.
 
\begin{Def}\label{labelling}Let $\bii=(i_1,...,i_N)$ be a reduced word, and denote the Lusztig coordinates of $C_0^\oo(U_{>0}^+)\simeq C_0^\oo(\R_{>0}^N)$ by $\bx=(x_j)_{j=1}^N$ such that $x_j$ corresponds to the $j$-th letter $i_j$. 
 
Following the approach in \cite{FI}, we apply the formal Mellin transformation of the form
$$\cM: F(\bx)\mapsto f(\bu):=\int F(\bx)\bx^\bu d\bx$$
 on the functions $F(\bx)\in C_0^\oo(U_{>0}^+)$, which transforms differential operators in $\bx$ into finite difference operators in the Mellin-transformed variables $\bu=(u_j)_{j=1}^N$.
\end{Def}
Here, by \emph{differential operators in $\bx$} we mean linear combinations of operators of the form $\dis P(\bx)\left(\del[,\bx]\right)^{\ba}$ where $P(\bx)$ is a Laurent polynomial in the $\bx$ variables and $\dis\left(\del[,\bx]\right)^\ba:=\prod_{n=1}^N \left(\del[,x_j]\right)^{a_j}$ for some nonnegative integer vector $\ba=(a_1,...,a_N)\in\Z_{\geq0}^N$.

\begin{Rem} In the \emph{formal} Mellin transformation we do not care about the metric and can just take $d\bx$ to be the standard Lebesgue measure for $U_{>0}^+\simeq \R_{>0}^N$. We will see later that the resulting finite difference operators induced from the regular action of $U(\g)$ on $C_0^\oo(U_{>0}^+)$ extend to well-defined operators on the \emph{polynomial ring} $\C[u_1,...,u_N]$, and still form a representation of $\cU(\g)$, as proved in Propositions \ref{PropE}, \ref{PropF} and \ref{PropH} below for the action of its generators. 

As shown in \cite{Ip2}, using the Haar measure instead, one can modify $\cM$ to become the usual unitary transformation $L^2(U_{>0}^+,dg)\to L^2(\R^N, d\bu)$. However, this will just introduce an extra shift in $\bu$ by a constant in the transformed action, which can be absorbed into the parameter $\a$ in $D^q$ below (cf. \eqref{qact}), and is not important for our purpose.
\end{Rem}
\begin{Def} Let $\cW\subset L^2(\R^N)$ be the dense subspace of entire rapidly decreasing functions spanned by functions of the form
\Eq{
\label{entire}e^{-\bu A\bu^T+\bc\cdot\bu}p(\bu)
}
for any $A\in M_{N\x N}(\C)$ with $Re(A)$ positive definite, $\bc\in \C^N$ and $p(\bu)$ a polynomial in $\bu$.
\end{Def}
Note that this space is preserved under the Fourier transformation \cite{FG}, and the unbounded multiplication operators $\cE_\bc:=e^{\pi b \bc\cdot\bu}$ for $\bc\in\C^N$ defined on $\cW$ are essentially self-adjoint with self-adjoint extension on 
\Eq{\label{domain}Dom(\cE_\bc):=\{f(\bu)\in L^2(\R^N): e^{\pi b \bc\cdot\bu}f(\bu)\in L^2(\R^N)\}.}

Now we introduce the following quantization method.
\begin{Def}\label{posquan} Given a finite difference operator on $\C[u_1,...,u_N]$ of the form $$D:f(\bu)\mapsto (1+\a+P(\bu))f(\bu+\be),$$ in the classical action, where $\a\in\C$ is a constant, $P(\bu)$ is a linear function over $\Q$ in the coordinate vector $\bu$ and $\be\in \R^N$ is a constant vector, we define the corresponding positive quantized action by
\Eq{\label{qact}D^q:=\left[\frac{Q}{2b}+\frac{\bi}{b}\a-\frac{\bi}{b}P(\bu)\right]_qe^{-2\pi bp_\be}}
acting on $\cW\subset L^2(\R^N)$ as unbounded operator, where the momentum operator $p_{\be}$ is defined such that $e^{-2\pi bp_\be}$ acts as $f(\bu)\mapsto f(\bu+\bi b\be)$. 
\end{Def}

The rescaling by $-\frac{\bi}{b}$ is such that we recover the standard quantum plane variables $\{e^{2\pi bu}, e^{2\pi bp}\}$ in the expressions
\Eq{e^{2\pi b u}e^{2\pi b p}=q^2 e^{2\pi b p}e^{2\pi b u}}
where the relation holds in the sense of unbounded operators interpreted appropriately \cite{Hall, PT2}, namely as bounded unitary operators,
\Eq{\label{oneparameter}e^{2\pi \bi b s u}e^{2\pi \bi bt p} = e^{-2\pi \bi stb^2}e^{2\pi \bi b t p}e^{2\pi \bi b s u},\tab \forall s,t\in \R.}

Due to the twisting $\frac{Q}{2b}=\frac{1}{2}+\frac{1}{2b^2}$, the quantized action has no classical limit $b\to 0$. The introduction of the twist comes from the following observation\footnote{We thank the referee for pointing out an argument that avoids the use of the Baker-Campbell-Hausdorff formula, which requires more justification on the domains of the unbounded operators.}:
\begin{Prop}\label{symmetric}
Expression \eqref{qact} is positive symmetric on $\cW$ whenever 
$$[p_\be,P(\bu)]=\frac{1}{2\pi \bi}.$$
\end{Prop}
\begin{proof} For $A$ a $\Q$-linear combination of $u_i$ (and possibly a constant $\a$) and $B$ a $\Q$-linear combination of $p_i$ as above such that they are self-adjoint on $L^2(\R^N)$ and $[B,A]=\frac{1}{2\pi \bi}$, we have
\Eqn{
\left[\frac{Q}{2b}-\frac{\bi}{b}A\right]_qe^{-2\pi b B} &= \frac{q^{\frac{Q}{2b}-\frac{\bi}{b}A}-q^{-\frac{Q}{2b}+\frac{\bi}{b}A}}{q-q\inv}e^{-2\pi b B}\\
&=\frac{\bi}{q-q\inv}(q^{\frac12}e^{\pi b A}e^{-2\pi b B} +q^{-\frac12}e^{-\pi b A}e^{-2\pi b B}).
}
Note that $\frac{\bi}{q-q\inv}=\frac{1}{2\sin(\pi b^2)}>0$. 
By a change of variable, which is a unitary transformation on $L^2(\R^N)$, one can transform $A,B$ to the standard single variable position and momentum operator $u,p$ on, say, the first coordinate of $L^2(\R^N)$. Hence it suffices to consider the positivity of the expressions 
\Eq{q^{\pm\frac12}e^{\pm\pi b u}e^{-2\pi b p}.}

Since the exponentiated commutation relations \eqref{oneparameter} for $u$ and $p$ hold as unitary operators on $\cW$ (and hence on $L^2(\R^N)$), one can define $\pm u-2p$ to be the infinitesimal generator of the strongly continuous one-parameter group of unitary operators
\Eq{
U_{\pm}(t):=e^{\mp\frac{\pi \bi t^2}{2}}e^{\pm\pi \bi t u}e^{-2\pi \bi t p},\tab t\in\R
}
so that 
\Eq{e^{\pi \bi t(\pm u-2p)}=e^{\mp\frac{\pi \bi t^2}{2}}e^{\pm\pi \bi t u}e^{-2\pi \bi t p},\tab t\in\R.}

As $\cW$ is a subspace of the analytic vectors of $U(t)$, one can analytic continue the expression and put $t=-\bi b$ to obtain
\Eq{
\label{BCH}q^{\pm\frac12}e^{\pm\pi b u}e^{-2\pi b p}=e^{\pi b(\pm u-2p)}
}
as operators on $\cW$, and hence they are positive symmetric.
\end{proof}
\begin{Rem}\label{ext}
Although in \eqref{BCH} the two operators coincide on $\cW$, the natural domain as unbounded operator of the right-hand side is much larger. In the sequel we will take this as the definition of the extension of the product of two unbounded positive operators of the form \eqref{BCH}.
\end{Rem}

We saw in Proposition \ref{symmetric} that expression \eqref{qact} is an unbounded positive \emph{symmetric} operator on $\cW\subset L^2(\R^N)$. In Corollary \ref{corpos} we will show that the positive quantized actions given by the formal expressions below are indeed positive essentially self-adjoint operators on $L^2(\R^N)$ with domains obtained by pullbacks of $Dom(\cE)$ given in \eqref{domain} for some multiplication operator $\cE$. 

Let us now describe the results of applying the above procedure to the generators of the Lie algebra $\g$. 

\begin{Prop}\label{PropE}Fix $i\in I$ and choose a reduced word $\bii$ with $i_N=i$. Then the right multiplication on $U_{>0}^+$ by $\exp(tE_i)$ is simply given by $x_N\mapsto x_N+t$, hence differentiating the action, $E_i$ acts on $C_0^\oo(U_{>0}^+)$ by
\Eq{E_i:=\left.\frac{d}{dt}\right|_{t=0}\exp(tE_i)=\pd_{x_N}:=\del[,x_N],}
where $x_N$ is the rightmost coordinate of the parametrization of $C_0^\oo(U_{>0}^+)$. Under the Mellin transform the action is given on $f(\bu)$ by
\Eq{E_i:f(\bu)\mapsto (u_N+1)f(..., u_N+1),}
where we omit the other coordinates that are unchanged.
\end{Prop}

According to Definition \ref{posquan}, we make the following definition.
\begin{Def}\label{E}
Fix a reduced word $\bii$ with $i_N=i$. The positive quantized action for $E_i$ is defined by
\begin{eqnarray}
\pi_{\bii}(E_i):=E_i^q&=&\left[\frac{Q}{2b}-\frac{\bi}{b}u_N\right]_qe^{-2\pi bp_N}\\
&=&\label{EE}\frac{\bi}{q-q\inv}(e^{\pi b(u_N-2p_N)}+e^{\pi b(-u_N-2p_N)}).
\end{eqnarray}
\end{Def}

In the propositions and definitions below, we let $\l_i\in\R$, and fix an arbitrary reduced word $\bii=(i_1,...,i_N)$. 

\begin{Prop}\label{PropF}The action of $\exp(tF_i)$ on $C_0^\oo(U_{>0}^+)$ is given by
\Eq{\label{eF1}
\exp(tF_i):F(\bx)\mapsto D_1(\bx)^{2\l_i}F(\what[\bx]),
}
where $\l_i\in\C$ are complex parameters, $D_k(\bx)$ and $\what[\bx]:=(\what[x]_k)$ are given by
\begin{eqnarray}
D_k(\bx)&:=&1+t\sum_{\substack{k\leq j\leq N\\ i_j=i}}x_j,\label{eF2}\\
\what[x]_k&=&\case{\dis\frac{x_k}{D_k(\bx)D_{k+1}(\bx)}& a_{i,i_k}=2,\label{eF3}\\
x_k D_k(\bx)& a_{i,i_k}=-1,\\
x_k&a_{i,i_k}=0.}
\end{eqnarray}
The action of $F_i$ is given by
\Eq{
F_i:=\left.\frac{d}{dt}\right|_{t=0}\exp(tF_i):=\sum_{k:i_k=i}x_k\left(-\sum_{j=1}^{k-1} a_{i,i_j} x_j\pd_{x_j}+x_k\pd_{x_k}+2\l_i\right).\label{eF4}
}
The Mellin transformation of the above action is:
\Eq{\label{FFr}
F_i:f(\bu)\mapsto\sum_{k:i_k=i}\left(1-\sum_{j=1}^{k-1} a_{i,i_j}u_j-u_k+2\l_i\right)f(u_k-1).}
\end{Prop}

\begin{proof} Let us first prove \eqref{eF1}--\eqref{eF3}. Since the pinning $x_i(t)$ for the upper unipotent part has the same symbol with the Lusztig coordinates $(x_k)$, let us change the name of the coordinates to $\ba:=(a_k)$ instead. Using \eqref{EF}--\eqref{EKF} and by induction, the right multiplication of $y_i(t)=\exp(tF_i)$ on $g\in U_{>0}^+$ can be rewritten as
\Eqn{
g\cdot\exp(tF_i)&=\left(\prod_{k=1}^N x_{i_k}(a_{i_k})\right)y_i(t)=y_i(t)\left(\prod_{k=1}^N \chi_{i_k}\left(\frac{D_k(\ba)}{D_{k+1}(\ba)}\right)x_{i_k}\left(a_{i_k}\frac{D_{k+1}(\ba)}{D_k(\ba)}\right)\right),
}
where $D_k(\ba)$ is given by \eqref{eF2} with $x_k$ replaced by $a_k$. Note that $\frac{D_k(\ba)}{D_{k+1}(\ba)}=1$ if $i_k\neq i$, hence we can ignore the $\chi_{i_k}$ terms for $i_k\neq i$. Now using \eqref{EK} and \eqref{EK2}, we move all the $\chi_i$ to the left and obtain again by induction:
\Eqn{
g\cdot\exp(tF_i)=y_i(t)\left(\prod_{k:i_k=i}\chi_i\left(\frac{D_k(\ba)}{D_{k+1}(\ba)}\right)\right)\left(\prod_{k=1}^N x_{i_k}(\what[a]_k)\right)=y_i(t)\chi_i(D_1(\ba))\left(\prod_{k=1}^N x_{i_k}(\what[a]_k)\right),
}
where $\what[a]_k$ is given by \eqref{eF3} with $x_k$ replaced by $a_k$. This gives the Gauss decomposition of $g\cdot\exp(tF_i)$. Therefore by Proposition \ref{classical}, we quotient out $y_i(t)$ and raise the character $\chi_i$ to the power $2\l_i$ and obtain \eqref{eF1}.

To get \eqref{eF4} we differentiate the action using the chain rule to get
\Eq{
F_i &= -\sum_{k:i_k=i} (x_k+2\sum_{\substack{k+1\leq j\leq N\\i_j=i}} x_j)x_k \pd_{x_k}+\sum_{k: a_{i,i_k}=-1}(\sum_{\substack{k\leq j\leq N\\i_j=i}}x_j)x_k\pd_{x_k}+2\l_i\sum_{k:i_k=i} x_k.
}
Upon comparing the terms of the form $x_jx_k\pd_{x_k}$, we can rearrange the summation and obtain the required form \eqref{eF4}. Finally \eqref{FFr} can be obtained from \eqref{eF4} by the standard procedure described in \cite{FI}.
\end{proof}

Again according to Definition \ref{posquan}, we make the following definition.
\begin{Def}\label{F}
The positive quantized action for $F_i$ is given by:
\Eq{\label{FF}
\pi_{\bii}(F_i):=F_i^q&=\sum_{k: i_k=i}\left[\frac{Q}{2b}+\frac{\bi}{b}\left(\sum_{j=1}^{k-1}a_{i,i_j} u_j+u_k+2\l_i\right)\right]_q e^{2\pi b p_k}\\
&=\frac{\bi}{q-q\inv}\sum_{k:i_k=i}\left(e^{\pi b\left(\sum_{j=1}^{k-1}a_{i,i_j} u_j+u_k+2\l_i\right)+2\pi bp_k}+e^{-\pi b\left(\sum_{j=1}^{k-1}a_{i,i_j} u_j+u_k+2\l_i\right)+2\pi bp_k}\right).\nonumber
}
\end{Def}

Finally the following is easy to obtain by applying \eqref{EK} and \eqref{EK2} repeatedly:
\begin{Prop} \label{PropH}The action of $exp(tH_i)$ on $C_0^\oo(U_{>0}^+)$ is given by 
\Eq{exp(tH_i):F(\bx)\mapsto e^{2\l_i t}f(..., e^{-a_{i,i_j}t}x_j, ...),}
so that
\Eq{H_i:=\left.\dd[,t]\right|_{t=0}\exp(tH_i)= \sum_{j=1}^{N} -a_{i,i_j}x_j\pd_{x_j}+2\l_i.}
The Mellin-transformed action is simply multiplication by
\Eq{\label{HH}H_i= \sum_{j=1}^{N} -a_{i,i_j}u_j+2\l_i,}
\end{Prop}
\begin{Def}\label{H}
The positive quantized action for $K_i:=q^{H_i}$  (with the corresponding rescaling by $-\frac{\bi}{b}$) is given by
\Eq{\label{KK}\pi_{\bii}(K_i):=K_i^q=e^{-\pi b\left(\sum_{j=1}^{N} a_{i,i_j}u_j+2\l_i\right)}.}
\end{Def}

In the rest of the paper, by abuse of notation we will just write $E_i$ for $\pi_{\bii}(E_i)$ when it is clear from the context that we are talking about representations on some Hilbert space with the choice of $\bii$ being understood.
\section{Quantum dilogarithm}\label{sec:qdilog}
Let us briefly recall the definition of the quantum dilogarithm functions, central to the calculations in the next section. This function $G_b(x)$ and its variant $g_b(x)$ were first introduced in \cite{FKa} and motivate the definition of the modular double \cite{Fa}. We summarize those of properties that are needed in the next section. References can be found e.g. in \cite{Ip} and \cite{PT2}. 

Again let $q=e^{\pi \bi b^2}$ and $Q=b+b\inv$, where $b^2\in \R\setminus \Q$ and $0<b<1$.

\begin{Def} The quantum dilogarithm function $G_b(z)$ is defined on\\ ${0\leq Re(z)\leq Q}$ by
\Eq{\label{intform} G_b(z):=\over[\ze_b]\exp\left(-\int_{\W}\frac{e^{\pi tz}}{(e^{\pi bt}-1)(e^{\pi b\inv t}-1)}\frac{dt}{t}\right),}
where \Eq{\ze_b=e^{\frac{\pi \bi}{2}(\frac{b^2+b^{-2}}{6}+\frac{1}{2})},}
and the contour goes along $\R$ with a small semicircle going above the pole at $t=0$. This can be extended meromorphically to the whole complex plane.
\end{Def}
\begin{Def} The function $g_b(z)$ is defined by
\Eq{g_b(z):=\frac{\over[\ze_b]}{G_b(\frac{Q}{2}+\frac{\log z}{2\pi \bi b})}.}
where $\log$ takes the principal branch of $z$.
\end{Def}
We will need the following properties:
\begin{Lem}\label{gbunit} We have 
\Eq{G_b(z)G_b(Q-z)=e^{\pi \bi z(z-Q)}.}
Furthermore $|g_b(z)|=1$ when $z\in\R_{>0}$, hence in particular $g_b(X)$ is a unitary operator for any positive operator $X$.
\end{Lem}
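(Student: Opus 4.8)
The plan is to derive both assertions directly from the integral representation \eqref{intform}. I would prove the reflection identity first by a contour-reflection argument, and then combine it with a Schwarz-type conjugation relation to establish $|G_b|\equiv 1$ on the line $\mathrm{Re}(x)=\frac Q2$, which is exactly where $g_b$ is evaluated.

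For the reflection identity, write $\log G_b(x)=\log\over[\ze_b]-I(x)$ with $I(x)=\int_\W\kappa(t)e^{\pi tx}\,\frac{dt}{t}$ and $\kappa(t)=\big[(e^{\pi bt}-1)(e^{\pi b\inv t}-1)\big]\inv$. The structural fact driving everything is the kernel symmetry $\kappa(-t)=e^{\pi Qt}\kappa(t)$, which follows from $e^{-\pi bt}-1=-e^{-\pi bt}(e^{\pi bt}-1)$ together with $b+b\inv=Q$. Substituting $t\mapsto -t$ leaves $\frac{dt}{t}$ invariant, turns $e^{\pi tx}$ into $e^{\pi t(Q-x)}$, and reflects $\W$ (which passes above $t=0$) into a contour $\W^-$ passing below the pole. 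Pushing $\W^-$ back across $t=0$ gives $I(x)=-\big[I(Q-x)+2\pi i\,\mathrm{Res}_{t=0}\big]$, whence
$$\log G_b(x)+\log G_b(Q-x)=2\log\over[\ze_b]+2\pi i\,\mathrm{Res}_{t=0}\!\left[\frac{e^{\pi t(Q-x)}}{t(e^{\pi bt}-1)(e^{\pi b\inv t}-1)}\right].$$
The residue is an elementary Laurent computation at a third-order pole; extracting the $t\inv$-coefficient yields $\tfrac{x(x-Q)}{2}+\tfrac{b^2+b^{-2}}{12}+\tfrac14$. Since $\over[\ze_b]$ is defined precisely so that $2\log\over[\ze_b]=-\pi i\big(\tfrac{b^2+b^{-2}}{6}+\tfrac12\big)$, the constant part of $2\pi i\,\mathrm{Res}$ cancels, leaving $\pi i x(x-Q)$ on the strip $0\le\mathrm{Re}(x)\le Q$; the identity then extends to all $x$ by meromorphic continuation.

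For unitarity, note that for $x\in\R_+$ the argument $z:=\tfrac Q2+\tfrac{\log x}{2\pi ib}$ satisfies $\mathrm{Re}(z)=\tfrac Q2$, so $\over[z]=Q-z$. Conjugating the integral representation—using that $\kappa(t)$ is real on $\R$ and that conjugation reflects the contour across the real axis—and again moving the reflected contour back through $t=0$ produces a relation of the form $\over[G_b(z)]=G_b(\over[z])\,e^{-\pi i\over[z](\over[z]-Q)}$, where the exponent is governed by the very same residue (the constant again being absorbed by $\ze_b$). Specializing $\over[z]=Q-z$ gives $\over[G_b(z)]=G_b(Q-z)\,e^{\pi i z(Q-z)}$, so $|G_b(z)|^2=G_b(z)G_b(Q-z)\,e^{\pi i z(Q-z)}$; applying the reflection identity to the first two factors collapses this to $e^{\pi i z(z-Q)}e^{\pi i z(Q-z)}=1$. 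As $|\over[\ze_b]|=1$, this yields $|g_b(x)|=|\over[\ze_b]|/|G_b(z)|=1$ for every $x>0$. Finally, for a positive self-adjoint $X$ the operator $g_b(X)$ is defined by the spectral theorem, and since $|g_b|\equiv1$ on $\R_+\supseteq\mathrm{spec}(X)$ we get $g_b(X)^*g_b(X)=(|g_b|^2)(X)=\mathbf{1}=g_b(X)g_b(X)^*$, so $g_b(X)$ is unitary.

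The main obstacle is bookkeeping rather than conceptual: one must track the orientation of the contour relative to the pole at $t=0$ correctly in both the reflection and the conjugation step—a sign slip there flips the sign of the quadratic exponent—and carry the third-order Laurent expansion far enough to capture the constant term that the prefactor $\ze_b$ is tuned to annihilate. Everything else reduces to the two clean structural inputs, the symmetry $\kappa(-t)=e^{\pi Qt}\kappa(t)$ and the reality of $\kappa$ on $\R$, so once the contour manipulations are pinned down both statements follow.
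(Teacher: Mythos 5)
Your proof is correct, but there is nothing in the paper to compare it against: the paper does not prove this lemma at all, it recalls it as a standard property of the quantum dilogarithm and defers to the references \cite{Ip} and \cite{PT2}. What you have written is a correct, self-contained derivation from the integral representation \eqref{intform}, and it is in substance the standard argument in the literature. The two inputs you isolate do carry all the weight: the kernel symmetry $\kappa(-t)=e^{\pi Qt}\kappa(t)$ (which holds because $b+b^{-1}=Q$) drives the reflection identity, and the reality of $\kappa$ on $\R$ drives the conjugation relation. Your orientation bookkeeping is right: $t\mapsto -t$ sends the contour above the pole to the reversed contour below it, pushing back across $t=0$ gives $I(x)+I(Q-x)=-2\pi i\,\mathrm{Res}_{t=0}$, the third-order Laurent expansion gives the residue $\tfrac{x(x-Q)}{2}+\tfrac{b^2+b^{-2}}{12}+\tfrac14$, its constant part cancels against $2\log\overline{\zeta_b}=-\pi i\bigl(\tfrac{b^2+b^{-2}}{6}+\tfrac12\bigr)$, and the same residue yields $\overline{G_b(z)}=G_b(\overline{z})\,e^{-\pi i\overline{z}(\overline{z}-Q)}$, which together with the reflection identity collapses $|G_b(z)|^2$ to $1$ on the line $\mathrm{Re}(z)=\tfrac{Q}{2}$. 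Two small refinements you should make explicit. First, the integral in \eqref{intform} converges only on the strip (the integrand behaves like $e^{\pi t(x-Q)}$ as $t\to+\infty$ and like $e^{\pi tx}$ as $t\to-\infty$), so both the reflection and the conjugation relations are derived there and then extended meromorphically, as you indicate; conveniently, the line $\mathrm{Re}(z)=\tfrac{Q}{2}$ on which unitarity is needed lies in the interior of the strip, so no continuation is actually required for the second claim. Second, in the final functional-calculus step, ``positive operator'' should be read as a self-adjoint operator with spectrum contained in $\R_{>0}$ (which is the case for all the operators of the form $e^{\pi b(\cdots)}$ appearing in the paper), so that $|g_b|\equiv 1$ holds spectrally almost everywhere and $g_b(X)^*g_b(X)=g_b(X)g_b(X)^*=1$ follows.
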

\begin{Lem}[$q$-binomial Theorem]\label{qbinom}
For positive self-adjoint variables $u,v$ with ${uv=q^2vu}$,
\Eq{(u+v)^{\bi b\inv t}=\int_{C}\veca{\bi t\\\bi\t}_b u^{\bi b\inv (t-\t)}v^{\bi b\inv \t}d\t ,}
where the $q$-beta function (or $q$-binomial coefficient) is defined by
\Eq{\veca{t\\\t}_b:=\frac{G_b(-\t)G_b(\t-t)}{G_b(-t)},}
and $C$ is the contour along $\R$ that goes above the pole at $\t=0$
and below the pole at $\t=t$.
\end{Lem}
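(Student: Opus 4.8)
The plan is to treat both sides as operator-valued functions of $t$, say $L(t):=(u+v)^{ib\inv t}$ and $R(t):=\int_C \veca{it\\i\t}_b\, u^{ib\inv(t-\t)}v^{ib\inv\t}\,d\t$, defined on the strip of $t$ for which the contour integral converges, and to prove $L\equiv R$ by showing that both satisfy the same first-order $q$-difference equation in $t$ together with the same normalization. The equation for $L$ is immediate: since $ib\inv(t-ib)=ib\inv t+1$, we have $L(t-ib)=(u+v)L(t)$. The whole argument then reduces to verifying that $R$ obeys the same recursion and pinning down the constant.

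For $R$ I would multiply by $(u+v)$ on the left and split the integrand into a $u$-term and a $v$-term. In the $v$-term I commute $v$ through the power of $u$ via $v\,u^{ib\inv s}=e^{2\pi b s}u^{ib\inv s}v$, which follows from $uv=q^2vu$, and then shift the contour by $\t\mapsto\t+ib$ so that the operator factors again read $u^{ib\inv((t-ib)-\t)}v^{ib\inv\t}$. Matching with $R(t-ib)$ reduces the step to a single contiguous relation for the $q$-beta function,
\Eq{\veca{it\\i\t}_b+e^{2\pi b(t-\t)-2\pi i b^2}\veca{it\\i(\t+ib)}_b=\veca{i(t-ib)\\i\t}_b,}
which I would verify directly from the shift relation $G_b(x+b)=(1-e^{2\pi i bx})G_b(x)$ (itself a consequence of the integral representation \eqref{intform}): each $q$-beta factor contributes ratios $1-e^{2\pi i b(\cdot)}$, and the three exponentials telescope to give exactly the right-hand side. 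Crucially, the contour $C$ must pass above $\t=0$ and below $\t=t$ precisely so that the shift $\t\mapsto\t+ib$ introduces no residue contribution, which is where the stated placement of $C$ enters.

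To fix the overall constant I would examine $t\to 0$, where $L(0)=\mathrm{Id}$. As $t\to 0$ the pole of $G_b(-i\t)$ at $\t=0$ (approached from above by $C$) and the pole of $G_b(i\t-it)$ at $\t=t$ (approached from below) pinch the contour, and the residue at the pinch reproduces the identity operator, matching $L(0)$. For uniqueness I would run the same computation with $b$ replaced by $b\inv$, obtaining a second difference equation in $t$ with the incommensurate step $ib\inv$; since $b^2\notin\Q$, a solution of both recursions with the growth dictated by the asymptotics of $G_b$ is rigid, forcing $L\equiv R$.

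The main obstacle is the operator-analytic rigor rather than the formal identity. One must establish that the contour integral converges in the strong operator topology, using the exponential asymptotics of $\veca{it\\i\t}_b$ as $\mathrm{Im}\,\t\to\pm\infty$ inherited from $G_b$; that the complex powers $u^{ib\inv(t-\t)}$, $v^{ib\inv\t}$ and $(u+v)^{ib\inv t}$ are all defined via the spectral theorem on a common dense domain; and that interchanging $(u+v)$ with the integral and shifting the contour past the exact pole configuration are legitimate there. I expect the cleanest route to full rigor is to pass to the Stone--von Neumann model $u=e^{2\pi b p}$, $v=e^{2\pi b x}$ with $[x,p]=\frac{1}{2\pi i}$, in which $u^{ib\inv s}$ is a shift and $v^{ib\inv\t}$ a multiplication operator, so that the operator identity becomes a scalar integral identity tested against suitable functions; the delicate point is then matching the pole-crossing bookkeeping in that model with the prescribed contour $C$.
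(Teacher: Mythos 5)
The first thing to note is that the paper contains no proof to compare against: Section \ref{sec:qdilog} explicitly \emph{recalls} this lemma, pointing to \cite{Ip} and \cite{PT2}, and the proof found there is of a different nature --- one rewrites $u+v=g_b(qu\inv v)\,u\,g_b(qu\inv v)^*$ (a rearrangement of \eqref{qsum2} in Lemma \ref{qsum}), raises this to the power $ib\inv t$, and expands the two $g_b$-factors by their Mellin--Barnes representations, collapsing the result with the tau-beta theorem (Lemma \ref{tau}); that route yields the transcendental relation $(u+v)^{1/b^2}=u^{1/b^2}+v^{1/b^2}$ at the same time. Your difference-equation strategy is genuinely different, and its algebraic core is correct: the commutation $vu^{ib\inv s}=e^{2\pi bs}u^{ib\inv s}v$ is right, and your contiguous relation does follow from $G_b(x+b)=(1-e^{2\pi ibx})G_b(x)$ --- both sides reduce to $\frac{G_b(-i\t)G_b(i\t-it)}{G_b(-it)}\cdot\frac{1-e^{2\pi bt}e^{-2\pi ib^2}}{1-e^{2\pi b(t-\t)}e^{-2\pi ib^2}}$. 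One bookkeeping claim is wrong, though fixably so: after substituting $\t\mapsto\t+ib$ the contour becomes $C-ib$, and deforming it back to $C$ \emph{does} cross the pole of $G_b(i\t-it-b)$ at $\t=t-ib$. What saves the step is that the contour implicit in $R(t-ib)$ is not $C$ but the Barnes contour adapted to the parameter $t-ib$ (above the string of poles descending from $\t=0$, below the string ascending from $\t=t-ib$), and both of your integrals deform to that common contour without meeting any pole; the justification you give, as stated, would actually produce a spurious residue term.

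The genuine gap is the uniqueness step. Repeating your computation with $b$ replaced by $b\inv$ (using $G_b(x+b\inv)=(1-e^{2\pi ib\inv x})G_b(x)$ and the self-duality of $G_b$) shows that $R$ satisfies $R(t-ib\inv)=\bigl(u^{1/b^2}+v^{1/b^2}\bigr)R(t)$. But what $L$ satisfies is $L(t-ib\inv)=(u+v)^{1/b^2}L(t)$. Your rigidity argument needs $M(t)=L(t)\inv R(t)$ to be invariant under both shifts, and invariance under $t\mapsto t-ib\inv$ holds if and only if $(u+v)^{1/b^2}=u^{1/b^2}+v^{1/b^2}$ --- precisely Faddeev's transcendental identity for the quantum plane, i.e.\ the rank-one instance of the relations \eqref{trans}, a theorem of the same depth as the lemma being proved and itself normally derived from the same $g_b$-machinery. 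It cannot be invoked silently. So you must either prove it first (non-circularly, from Lemma \ref{qsum} together with the $b\to b\inv$ self-duality of $g_b$) and list it as an ingredient, or drop the second recursion altogether and show instead that the $ib$-periodic function $M(t)$ is entire and bounded on a period strip, hence constant by Liouville; the latter is exactly the operator-analytic estimate you deferred to the final paragraph, so as written the argument does not close.
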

\begin{Lem}[tau-beta Theorem]\label{tau} We have
\Eq{\int_C e^{-2\pi \t \b}
\frac{G_b(\a+\bi \t)}{G_b(Q+\bi \t)}d\t =\frac{G_b(\a)G_b(\b)}{G_b(\a+\b)},}
where the contour $C$ goes along $\R$ and goes above the poles of
$G_b(Q+\bi\t)$ and below those of $G_b(\a+\bi\t)$. By the asymptotic properties of $G_b$, the integral converges for ${Re(\b)>0}$, ${Re(\a+\b)<Q}$.
\end{Lem}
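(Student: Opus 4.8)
The plan is to prove the identity by the method of difference equations and analytic continuation: I treat the right-hand side $\frac{G_b(\a)G_b(\b)}{G_b(\a+\b)}$ as the expected closed form and show that the integral $I(\a,\b):=\int_C e^{-2\pi\t\b}\frac{G_b(\a+i\t)}{G_b(Q+i\t)}\,d\t$ obeys the same functional equations. First I would record the two shift relations satisfied by $G_b$, namely $G_b(x+b)=(1-e^{2\pi i bx})G_b(x)$ and the $b\mapsto b^{-1}$ analogue, which follow from the integral representation \eqref{intform} by a residue computation, together with the Gaussian asymptotics of $G_b$ as $Im(x)\to\pm\infty$. The latter are exactly what make $I(\a,\b)$ absolutely convergent and holomorphic in the strip $Re(\b)>0$, $Re(\a+\b)<Q$: the factor $e^{-2\pi\t\b}$ controls the $\t\to+\infty$ end while the Gaussian growth of $G_b(\a+i\t)/G_b(Q+i\t)$ controls the $\t\to-\infty$ end, and off this strip $I$ continues meromorphically.

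Second, I would derive first-order difference equations for $I$ under $\b\mapsto\b+b$ and $\b\mapsto\b+b^{-1}$. Writing $e^{-2\pi\t(\b+b)}=e^{-2\pi\t\b}e^{-2\pi b\t}$ and using the functional equations applied to both $G_b(\a+i\t)$ and $G_b(Q+i\t)$ — for instance $G_b(Q+i\t)=(1-e^{-2\pi b\t})G_b(b^{-1}+i\t)$ — I would rewrite the shifted integrand; a subsequent shift of the contour by $ib$, legitimate because the integrand decays on the connecting arcs, returns the expression to a multiple of $I(\a,\b)$. The goal is to reproduce the elementary ratio obeyed by the right-hand side, $\frac{G_b(\b+b)/G_b(\b)}{G_b(\a+\b+b)/G_b(\a+\b)}=\frac{1-e^{2\pi i b\b}}{1-e^{2\pi i b(\a+\b)}}$, and similarly for the $b^{-1}$-shift.

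Third, granting both difference equations, the ratio $F(\b):=I(\a,\b)\big/\frac{G_b(\a)G_b(\b)}{G_b(\a+\b)}$ is meromorphic and invariant under $\b\mapsto\b+b$ and $\b\mapsto\b+b^{-1}$. Since $b^2\notin\Q$, the group $b\Z+b^{-1}\Z$ is dense in $\R$, so a meromorphic function with these two real periods is constant. It then remains to fix the constant, which I would do by a limiting computation: as $\b\to 0^{+}$ the right-hand side blows up through the simple pole of $G_b(\b)$ at the origin, while in the integral the poles of $G_b(\a+i\t)$ and of $1/G_b(Q+i\t)$ pinch the contour and produce a matching simple pole in $\b$; comparing the two residues forces the constant to equal $1$.

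The main obstacle is the bookkeeping in the second and fourth steps: justifying the contour shift (that the arcs at infinity vanish in the relevant half-plane and that exactly the right residues are collected), and carrying out the pinching analysis at $\b=0$ cleanly enough to read off the residue. Should the pinching argument prove delicate, an alternative for the normalization is to use the difference equation to reduce to a base value of $\b$ at which $I(\a,\b)$ degenerates into a Gaussian Fourier integral that can be evaluated in closed form; a second alternative is to close the contour, sum the residues into a bilateral series, and identify it with the product side via the noncompact analogue of Ramanujan's ${}_1\psi_1$ summation.
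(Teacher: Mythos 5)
First, a point about the comparison you were asked to make: the paper contains no proof of this lemma. Section \ref{sec:qdilog} merely recalls known properties of $G_b$, and the tau-beta integral is imported from the literature (the references given are \cite{Ip} and \cite{PT2}). So your proposal cannot be matched against an argument in the paper and must be judged on its own. On its own terms, the skeleton is the standard one and is sound: the shift relations $G_b(x+b^{\pm 1})=(1-e^{2\pi i b^{\pm 1}x})G_b(x)$ are correctly stated (in particular your identity $G_b(Q+i\t)=(1-e^{-2\pi b\t})G_b(b^{-1}+i\t)$ is right); the Gaussian asymptotics do give absolute convergence exactly in the strip $Re(\b)>0$, $Re(\a+\b)<Q$; the contiguous difference equations in $\b$ can be derived as you describe; and the density argument is valid — a meromorphic function invariant under $\b\mapsto\b+b$ and $\b\mapsto\b+b^{-1}$ with $b^2\notin\Q$ can have no poles (their orbit would be dense) and is then constant along horizontal lines, hence constant. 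Since that constant may a priori depend on $\a$, your plan to fix it for each $\a$ by a $\b\to 0$ limit is the right thing to do.

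The one step that fails as written is the normalization by ``pinching.'' The poles of the integrand sit at $\t=i(\a+nb+mb^{-1})$ (from $G_b(\a+i\t)$, above $C$) and at $\t=-i(nb+mb^{-1})$ (from $1/G_b(Q+i\t)$, below $C$); none of these locations involve $\b$, because $\b$ enters only through the entire factor $e^{-2\pi\t\b}$. Nothing moves as $\b\to 0$, so no pinching can occur. The actual source of the simple pole of $I(\a,\b)$ at $\b=0$ is the loss of decay of the integrand at $\t\to+\infty$: there $G_b(\a+i\t)/G_b(Q+i\t)\to 1$ by the asymptotics (both arguments have imaginary part tending to $+\infty$), so the tail behaves like $\int^{\infty}e^{-2\pi\t\b}\,d\t$ and one gets $\lim_{\b\to 0^{+}}\b\, I(\a,\b)=\frac{1}{2\pi}$, independently of $\a$; matching this against the residue of $G_b(\b)$ at $\b=0$ on the right-hand side fixes the constant to $1$. (Symmetrically, the pole of the continuation at $\a+\b=Q$ comes from the $\t\to-\infty$ tail, not from contour pinching either.) With the mechanism corrected, your normalization goes through; alternatively, your fallback of closing the contour and summing residues into a bilateral $q$-series identified by a Ramanujan-type ${}_1\psi_1$ summation, applied simultaneously for $q$ and its modular dual, is a standard route to this identity in the literature and avoids the limit analysis entirely.
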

\begin{Lem}\label{qsum}For positive self-adjoint variables $u,v$ with $uv=q^2vu$ we have
\Eq{\label{qsum1}g_b(u)^*vg_b(u) = q\inv uv+v,}
\Eq{\label{qsum2}g_b(v)ug_b(v)^*=u+q\inv uv.}
\end{Lem}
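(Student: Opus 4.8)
The plan is to prove both identities by reducing each conjugation to multiplication by a scalar function of a single operator, using only the commutation relation $uv=q^2vu$, the unitarity of $g_b$ recorded above, and the functional (difference) equation of the quantum dilogarithm, rather than the integral formulas of Lemma \ref{tau}. First I would record the elementary consequence of $uv=q^2vu$: since $vu^n=q^{-2n}u^nv$ and $uv^n=q^{2n}v^nu$ by induction, one has $vf(u)=f(q^{-2}u)v$ and $uf(v)=f(q^2v)u$ for any $f$ admitting a functional calculus. Applying this to $f=g_b$ and using that $g_b(u)$ is unitary, so $g_b(u)^*=g_b(u)\inv$ and commutes with every function of $u$, gives
\[
g_b(u)^*\,v\,g_b(u)=g_b(u)^*\,g_b(q^{-2}u)\,v=\frac{g_b(q^{-2}u)}{g_b(u)}\,v .
\]
Thus the whole content of \eqref{qsum1} is the evaluation of the scalar ratio $g_b(q^{-2}u)/g_b(u)$ as a function of the positive operator $u$.

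Next I would translate the rescaling $u\mapsto q^{-2}u$ into a shift in the argument of $G_b$. Writing $w=\frac{Q}{2}+\frac{\log u}{2\pi i b}$, the relation $q^{-2}u=e^{-2\pi i b^2}u$ shifts $\log u$ by $-2\pi i b^2$ and hence sends $w\mapsto w-b$, so that $g_b(q^{-2}u)/g_b(u)=G_b(w)/G_b(w-b)$. Now I would invoke the standard difference equation $G_b(x+b)=(1-e^{2\pi i bx})G_b(x)$, which follows from the integral representation \eqref{intform} (see \cite{Ip,PT2}), with $x=w-b$, giving $G_b(w)/G_b(w-b)=1-e^{2\pi i b(w-b)}$. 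A direct computation using $Q=b+b\inv$ and $q=e^{\pi i b^2}$ shows $e^{2\pi i b(w-b)}=-q\inv u$, whence the ratio equals $1+q\inv u$ and
\[
g_b(u)^*\,v\,g_b(u)=(1+q\inv u)v=q\inv uv+v,
\]
which is \eqref{qsum1}. For \eqref{qsum2} the argument is the mirror image: $u\,g_b(v)^*=g_b(q^2v)^*u$ gives $g_b(v)\,u\,g_b(v)^*=\bigl(g_b(v)/g_b(q^2v)\bigr)u$, and the same difference equation with the shift $w\mapsto w+b$ yields $g_b(q^2v)/g_b(v)=(1+qv)\inv$, so the ratio is $1+qv$; reordering $(1+qv)u=u+qvu$ via $vu=q^{-2}uv$ then produces $u+q\inv uv$.

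The only genuine subtlety, and the step I would treat most carefully, is the operator-theoretic justification: $u$ and $v$ are unbounded positive self-adjoint operators, so I must verify that $g_b(u)$ preserves a common dense core, that the identity $vf(u)=f(q^{-2}u)v$ holds as an operator identity on that core (this is where the spectral theorem and the analyticity of $g_b$ away from its poles enter), and that the resulting equalities extend to the stated operators. The formal manipulation is short; essentially all of the real work lies in making these functional-calculus steps rigorous for the Weyl-type pair $uv=q^2vu$, which is standard and can be cited from \cite{Ip,PT2}. I expect the sign and phase bookkeeping in computing $e^{2\pi i b(w-b)}=-q\inv u$ (and its companion) to be the place where care is most needed, since a single misplaced factor would spoil the match with the right-hand sides.
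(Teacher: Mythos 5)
Your proof is correct: the reduction of the conjugation to a scalar ratio ($g_b(u)^*vg_b(u)=\frac{g_b(q^{-2}u)}{g_b(u)}v$ via $vf(u)=f(q^{-2}u)v$ and unitarity of $g_b$), the evaluation of that ratio by the shift equation $G_b(x+b)=(1-e^{2\pi i bx})G_b(x)$, and the phase bookkeeping $e^{2\pi i b(w-b)}=-q^{-1}u$ and $e^{2\pi i bw}=-qv$ all check out, and the shift equation you invoke is consistent with the reflection formula $G_b(x)G_b(Q-x)=e^{\pi i x(x-Q)}$ quoted in the paper. Note that the paper itself states this lemma without proof, merely recalling it from \cite{Ip} and \cite{PT2}; your argument is essentially the standard one found in those references, so it matches the intended proof.
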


\section{Transformations of the representations}\label{sec:trans}
In this section we derive the transformations of the actions corresponding to the change of reduced word $\bii$ of the longest element $w_0$. First let us consider the quantum analogue of the Lusztig decomposition and its transformation.

\begin{Prop} Let $\C[q,q\inv]\<\a,\b,\c\>$ be the noncommutative field of fractions generated by a triplet of quantum variables $(\a,\b,\c)$ satisfying
\Eq{
\a\b=q^2\b\a, \tab\c\a=q^2\a\c, \tab\b\c=\c\b} and let $\{\a',\b',\c'\}\in \C[q,q\inv]\<\a,\b,\c\>$ be defined by

\begin{eqnarray}\label{a'b'c'}
\a'&=&(\a+\c)\inv \c\b=\b\c(\a+\c)\inv,\\
\b'&=&\a+\c,\nonumber\\
\c'&=&(\a+\c)\inv \a\b=\b\a(\a+\c)\inv.\nonumber
\end{eqnarray}
such that they satisfy the commutative relations described by the diagrams
\Eq{\label{flip}\begin{tikzpicture}[baseline={([yshift=3ex]c.base)}]
\node (a) at (0, 0) {$\a$};
\node (b) at +(50: 1.17) {$\b$};
\node (c) at +(0: 1.5) {$\c$};
\foreach \from/\to in {c/a, a/b}
\draw [-angle 90] (\from) -- (\to);
\end{tikzpicture}
\tab\mbox{and}\tab
\begin{tikzpicture}[baseline={([yshift=3ex]b.base)}]
\node (a) at (0, 0) {$\a'$};
\node (b) at +(-50: 1.17) {$\b'$};
\node (c) at +(0: 1.5) {$\c'$};
\foreach \from/\to in {b/c, c/a}
\draw [-angle 90] (\from) -- (\to);
\end{tikzpicture}}
where $\a\to \b$ means that $\a,\b$ satisfy the relation $\a\b=q^2\b\a$.

Then a quantum analogue of the Lusztig transformation holds, where
\Eq{
x_2(\a)x_1(\b)x_2(\c)=x_1(\a')x_2(\b')x_1(\c')
} 
in the sense of the embedding of a $3\x 3$ matrix with quantum variable entries, and the map
\Eq{\phi:(\a,\b,\c)\mapsto(\a',\b',\c')}
is an involution between the quantum variables $(\a,\b,\c)\corr(\a',\b',\c')$.
\end{Prop}

\begin{Rem}
The pair satisfying \eqref{flip} forms the quantum cluster chart of the \emph{quantum unipotent part $U_q^+$} of the positive quantum group $GL_q^+(3,\R)$ defined in \cite{Ip2}, where we established the so-called \emph{Gauss--Lusztig decomposition} $GL_q^+(n,\R)=U_q^-T_qU_q^+$.
\end{Rem}
The commutation relation \eqref{flip} is not symmetric between $(\a,\b,\c)$ and $(\a',\b',\c')$. To give a universal description and extend the map $\phi$ to a decomposition of general type, we define the quantum variables as follows.
\begin{Def}
We assign an orientation
$$\cdots\circ_i \to \circ_j\cdots$$ for each pair of connected nodes of the Dynkin diagram, and given a reduced word $\bii$, we define the quantum variables $(\a_k)_{k=1}^N$ with $q$-commutation relations given by
\begin{eqnarray}\label{assign}
\a_m\longleftarrow \a_n&\case{i_m=i_n & m<n,\\i_m=i, i_n=j, &m>n,}
\end{eqnarray}
whenever $\circ_i\to\circ_j$ in the Dynkin diagram, or the corresponding variables commute otherwise if $i,j$ are not adjacent in the Dynkin diagram.
\end{Def}
For example, the relations in \eqref{flip} for $(\a,\b,\c)$ and $(\a',\b',\c')$ correspond to the orientation $\circ_1\to\circ_2$ of the Dynkin diagram of type $A_2$ with the reduced word $\bii=(2,1,2)$ and $\bii=(1,2,1)$ respectively.
In particular, if $m<k<n$, with $i_m=i_n=i$ and $i_k=j$, the triplet of quantum variables $(\a_m,\a_k,\a_n)$ satisfies \eqref{flip} in one of the orientations.

\begin{Prop}\label{phipreserve}Define the transformation of quantum variables corresponding to the change of words $\bii=(...,i,j,i,...)\corr \bii'=(...,j,i,j,...)$  at position $p>1$ by
\Eq{\phi:\a_k\mapsto \a_k',\tab k=1,...,N,}
where $\a_k':=\a_k$ for $k\notin\{p-1,p,p+1\}$ and the triplet
$(\a_{p-1}',\a_{p}',\a_{p+1}')$ is defined by \eqref{a'b'c'}. Then $\phi$ preserves the rule of assignments \eqref{assign} of $q$-commutation relations of $(\a_k')$ for the word $\bii'$ under the same orientation of the Dynkin diagram.
\end{Prop}
Furthermore, for an appropriate choice of orientation of the Dynkin diagram corresponding to the reduced expression of $w_0$ given by the arrows between frozen nodes of the basic quiver \cite{Ip8}, the quantum Lusztig variables are known to be monomials of the quantum cluster $\mathcal{X}$ variables \cite{FG}; hence an analogue of Lemma \ref{inj} still holds in the quantum case \cite{BZ} through the nonvanishing of the generalized quantum minors ($\cA$ variables), so that the above transformations of variables are consistent with different choices of change of reduced words $\bii$.\footnote{We thank L. Shen for the discussion on the consistency of the change of words. Recently a more geometric argument using DT-invariant has been presented in \cite{GS}.}

Let us now specialize to the case where $(\a,\b,\c)$ are \emph{positive}, i.e. by letting $q\in\C$ with $|q|=1$ and let $\a=\a^*$ be expressed in terms of positive essentially self-adjoint operators on some Hilbert space $\cH$. By Lemma \ref{qsum}, the sum $\a+\c$ is unitarily equivalent to a positive operator, hence $(\a',\b',\c')$ can be extended to positive operators as well (cf. Remark \ref{ext}). Using the idea of \cite{Ip, PT1}, let us consider the $C^*$-algebra of operators $\cA$ generated by the quantum coordinates $\a,\b,\c$ as in \eqref{flip} viewed as positive operators on $\cH$. We let $f(u,v,w)$ be a rapidly decreasing function in $L^2(\R^3)$ and consider the expression of ``symmetric type"
\Eq{
\int_{\R^3} f(u,v,w)\a^{-\bi b\inv u/2}\b^{-\bi b\inv v}\c^{-\bi b\inv w}\a^{-\bi b\inv u/2}dudvdw.
}
Since $\a,\b,\c$ are positive, the expression makes sense as a \emph{bounded operator} on $\cH$ by functional calculus and $\cA\subset \cB(\cH)$. Here symmetric type means that the order of the quantum variables will not be changed when taking the complex conjugate (since $\b$ and $\c$ commute), hence the order is canonical. In other words complex conjugation gives the involution $f(u,v,w)\mapsto f(-u,-v,-w)$. Note that we previously rescaled the variables by $\bi b$, hence the $-\bi b\inv$ factor appears in the definition. 

Then the involution \eqref{flip} induces a unitary transformation on $f(u,v,w)$ as follows.
\begin{Prop}\label{quanPhi} The Mellin transform $\cM:\cA\to L^2(\R^3)$ of the map $\phi$ induces a unitary transformation $\Phi$ on $L^2(\R^3)$ of the coordinate functions on the symmetric type integral
\begin{eqnarray*}
\cM \phi&:&\iiint f(u,v,w)\a^{-\bi b\inv u/2}\b^{-\bi b\inv v}\c^{-\bi b\inv w}\a^{-\bi b\inv u/2}dudvdw \\
&\longmapsto &\iiint (\Phi f)(u,v,w){\c'}^{-\bi b\inv w/2}{\a'}^{-\bi b\inv u}{\b'}^{-\bi b\inv v}{\c'}^{\bi b\inv u/2}dudvdw,
\end{eqnarray*}
such that $\Phi f\in L^2(\R^3)$ with $\Phi$ a unitary operator given by
\Eq{\Phi = T\circ M,}
where 
\Eq{M=g_b(e^{\pi b(2p_w-2p_u+u-v+w)})\circ g_b^*(e^{\pi b(2p_w-2p_u-u+v-w)}),}
and $T$ is the composition of unitary transformations:
\Eq{T=(u\corr v)\circ (v\corr w)\circ (u\mapsto u-w)\circ (v\mapsto v+w),}
or simply the transformation matrix of determinant 1:
\Eq{T\cdot \veca{u\\v\\w}=\veca{-1&1&0\\1&0&1\\1&0&0}\veca{u\\v\\w}.}
In particular, $\phi$ is an involution implies $\Phi:=\cM\phi\cM\inv$ is also an involution.
\end{Prop}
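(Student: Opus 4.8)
### Proof Proposal for Proposition \ref{quanPhi}

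The plan is to compute the transformation $\Phi$ directly by tracking what the classical Lusztig involution $\phi$ does to the Mellin-transformed integral representation of a copy of $U_{>0}^+$, and then to reorganize the result into the stated product of quantum dilogarithms composed with the linear map $T$. First I would start from the left-hand integral, in which the ordered monomial $a^{-ib^{-1}u/2}b^{-ib^{-1}v}c^{-ib^{-1}w}a^{-ib^{-1}u/2}$ encodes the coordinates in the word $x_2(a)x_1(b)x_2(c)$, and substitute the explicit rational expressions $a'=bc(a+c)^{-1}$, $b'=a+c$, $c'=ba(a+c)^{-1}$ coming from the $GL_q^+(3,\R)$ Gauss decomposition. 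The key analytic tool is the $q$-binomial theorem (Lemma in Section \ref{sec:qdilog}): since $a,c$ are positive $q$-commuting with $ac=q^2ca$, the factor $(a+c)^{-ib^{-1}t}$ expands as a contour integral of $a^{-ib^{-1}(t-\tau)}c^{-ib^{-1}\tau}$ against the $q$-beta function $\veca{it\\i\tau}_b$, whose constituents are ratios of $G_b$. This is precisely the mechanism that converts the algebraic substitution into a shift-and-multiply operator on the Mellin variables.

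The main steps, in order, are: (1) express each of ${a'}^{-ib^{-1}u}$, ${b'}^{-ib^{-1}v}$, ${c'}^{-ib^{-1}w}$ in terms of $a,b,c$ using the substitution, reordering factors with the $q$-commutation relations $uv=q^2vu$ and carefully bookkeeping the $q$-powers that accumulate; (2) apply the $q$-binomial theorem to every occurrence of $(a+c)^{\pm ib^{-1}t}$ to linearize into pure monomials in $a,b,c$; (3) collect all the resulting $G_b$-factors and contour integrals, and recognize the combination as a $g_b$ acting on an exponentiated linear form in the momenta and coordinates — here the identification $g_b(x)=\overline{\zeta_b}/G_b(\tfrac{Q}{2}+\tfrac{\log x}{2\pi ib})$ lets me trade the $G_b$-ratios for the operator $g_b(e^{\pi b(2p_w-2p_u+u-v+w)})$ and its conjugate; and (4) read off the remaining rearrangement of the monomial ordering (from the left pattern to the right pattern ${c'}^{\cdots}{a'}^{\cdots}{b'}^{\cdots}{c'}^{\cdots}$) as the linear change of variables $T$, then verify its matrix is $\left(\begin{smallmatrix}-1&1&0\\1&0&1\\1&0&0\end{smallmatrix}\right)$ and has determinant one.

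I expect the main obstacle to be step (2)–(3): keeping the contours and the noncommutative ordering consistent while applying the $q$-binomial theorem twice (once for $a'$ and once for $c'$, each involving $(a+c)^{-1}$), so that the product of the two $q$-beta integrals collapses cleanly into a single $g_b\,g_b^*$ rather than a more complicated kernel. The delicate point is that $g_b$ and $g_b^*$ here carry \emph{opposite} arguments $e^{\pi b(2p_w-2p_u\pm(u-v+w))}$, and producing exactly this pairing requires using Lemma \ref{qsum} (the pentagon-type relations $g_b(u)^*v g_b(u)=q^{-1}uv+v$ and $g_b(v)ug_b(v)^*=u+q^{-1}uv$) to move the dilogarithms past the monomials and reconstruct the additive structure $a+c$ as an operator identity. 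Once the dilogarithm factor is pinned down, the involutivity $\Phi^2=1$ should follow either from the geometric fact that $\phi$ is an involution on $(a,b,c)\corr(a',b',c')$ — which is already established before the statement — together with $T^2=\mathrm{id}$ and the unitarity of $g_b$, or directly by checking $T^2=\mathrm{id}$ on the matrix and that the two dilogarithm factors invert each other under conjugation by $T$.
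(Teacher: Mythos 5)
Your outline follows the same skeleton as the paper's proof (substitute $a'=bc(a+c)^{-1}$, $b'=a+c$, $c'=ba(a+c)^{-1}$; expand by the $q$-binomial theorem; collect $G_b$-factors; read off $T$), but it is missing the one ingredient that actually produces the closed-form dilogarithm factors. After the $q$-binomial expansion you do not yet have a product of $g_b$'s: you have a contour integral of shift operators against a $G_b$-kernel, of the form
\[
\int_C e^{\pi i\tau(2p_u-2p_w+\tau+u-v+w)}\,\frac{G_b(-i\tau)\,G_b(i\tau+iu-iv+iw)}{G_b(iu-iv+iw)}\,d\tau .
\]
The definition $g_b(x)=\overline{\zeta_b}/G_b\!\left(\tfrac{Q}{2}+\tfrac{\log x}{2\pi i b}\right)$ cannot by itself ``trade'' this integral for $g_b(e^{\pi b(2p_w-2p_u+u-v+w)})g_b^*(e^{\pi b(2p_w-2p_u-u+v-w)})$; the integral must be \emph{evaluated}, and the tool for that is the tau-beta theorem (Lemma \ref{tau}), applied with an operator-valued argument $\beta$ built from the Heisenberg variables $p_u-p_w$ and $u-v+w$. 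This is the crux of the paper's computation, and nothing in your steps (2)--(3) replaces it: Lemma \ref{qsum} moves a $g_b$ past a single monomial but does not evaluate a $\tau$-integral. Your invocation of Lemma \ref{qsum} really belongs to a different strategy---verifying directly that conjugation by the proposed $\Phi$ implements $a\mapsto a'$, $b\mapsto b'$, $c\mapsto c'$---which could in principle be carried out, but then the whole Mellin-kernel computation would be unnecessary; as written you interleave the two strategies and complete neither (in the paper, Lemma \ref{qsum} is used only afterwards, in the proof of Theorem \ref{trans}, to check the transformation of $F_i$ and $H_i$). A smaller inaccuracy: only \emph{one} application of the $q$-binomial theorem is needed, since after the $q$-commutation bookkeeping all powers of $(a+c)$ combine into the single factor $(a+c)^{ib^{-1}(u-v+w)}$; your two separate expansions (one for $a'$, one for $c'$) would produce a double contour integral, and you name no identity that collapses it to a single $g_b\,g_b^*$.

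The second genuine error is the involutivity argument: $T^2\neq\mathrm{id}$. With $T=\left(\begin{smallmatrix}-1&1&0\\1&0&1\\1&0&0\end{smallmatrix}\right)$ one computes $(T^2)_{11}=(-1)(-1)+(1)(1)+(0)(1)=2$, so $T$ is not an involution, and both of your proposed routes to $\Phi^2=1$ (each of which assumes $T^2=\mathrm{id}$) break down. The identity $\Phi^2=1$ holds only for the full composition $T\circ g_b(\cdot)g_b^*(\cdot)$, the failure of $T$ to square to the identity being compensated by the dilogarithm factors; this is visible in the last line of the paper's computation, where moving $T$ across the $g_b$-ratio changes the arguments from $2p_u-2p_w\mp(u-v+w)$ to $2p_w-2p_u\pm(u-v+w)$. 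The paper proves involutivity by running the same kernel computation in the reverse direction, from the primed monomials back to the unprimed ones, and observing that the resulting operator is again exactly $\Phi$; some computation of this kind is unavoidable, and your proposal does not contain it.
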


\begin{Rem}
Note that by Lemma \ref{gbunit}, $M$ is a unitary operator expressed in terms of the functional calculus applied to the quantum dilogarithm function $g_b$ and its complex conjugate. It can be written explicitly as an integral operator using the Fourier transformation formula \cite{Ip}
\Eq{
g_b(X)=\int_{\R+\bi0}\frac{e^{-\pi\bi t^2}}{G_b(Q+\bi t)}X^{\bi b\inv t}dt.
}
\end{Rem}
\begin{proof}
After substituting the expression \eqref{a'b'c'} for $(\a',\b',\c')$ on the right-hand side of $\cM\phi$, we can use the $q$-binomial formula (Lemma \ref{qbinom}) to expand
$$(\a+\c)^{\bi b\inv(u-v+w)}=\int_{C} \c^{\bi b\inv(u-v+w-\t)}\a^{\bi b\inv\t}\frac{G_b(-\bi\t)G_b(-\bi u+\bi v-\bi w+\bi\t)}{G_b(-\bi u+\bi v-\bi w)}d\t.$$
After grouping the terms with corresponding $q$-factors, and with some changes of variables, we arrive at the transformation on the coordinates
$$f(u,v,w)\mapsto \int_{C}f(v-u-\t,u+w,u+\t)e^{\pi \bi\t(\t+u-v+w)}\frac{G_b(-\bi\t)G_b(\bi\t+\bi u-\bi v+\bi w)}{G_b(\bi u-\bi v+\bi w)}d\t,$$
which can be rewritten using Lemma \ref{tau} as 
\begin{eqnarray*}
\Phi&=&\int_C e^{\pi  \bi \t(2p_u-2p_w+\t+u-v+w)}\frac{G_b(-\bi \t)G_b(\bi \t+\bi u-\bi v+\bi w)}{G_b(\bi u-\bi v+\bi w)}d\t\circ T\\
&=&\frac{G_b(\frac{Q}{2}-\bi p_u+\bi p_w-\frac{1}{2}(\bi u-\bi v+\bi w))G_b(\bi u-\bi v+\bi w)}{G_b(\frac{Q}{2}-\bi p_u+\bi p_w+\frac{1}{2}(\bi u-\bi v+\bi w))G_b(\bi u-\bi v+\bi w)}\circ T\\
&=&\frac{g_b(e^{\pi  b(2p_u-2p_w-u+v-w)})}{g_b(e^{\pi  b(2p_u-2p_w+u-v+w)})}\circ T\\
&=&T\circ \frac{g_b(e^{\pi b(2p_w-2p_u+u-v+w)})}{g_b(e^{\pi b(2p_w-2p_u-u+v-w)})}.
\end{eqnarray*}
Finally one proceeds by similar calculations to show that the map going the other way is exactly the same. The techniques of calculations above involving the Heisenberg-type variables $p$, $u$ can be found e.g. in \cite{Ip}.
\end{proof}

\begin{Thm}\label{trans} 
Let $(...,j,k,...)\corr (...,k,j,...)$ be a change of reduced words with $a_{jk}=0$, and the corresponding coordinates are given by $(u,v)$. Then the change of coordinates is simply given by $u\corr v$.

On the other hand, let $(...,j,k,j,...)\corr (...,k,j,k,...)$ be a change of reduced words with $a_{jk}=-1$. We relate the action of $E_i$ by the unitary transformation
\Eq{E_i\mapsto \Phi E_i\Phi\inv,}
where $\Phi$ is given in Proposition \ref{quanPhi} for the corresponding coordinates.

Then the actions of $F_i$ and $K_i$ constructed in Definitions \ref{F} and \ref{H} are also related in the same way as
\Eq{F_i\mapsto \Phi F_i\Phi\inv,\tab K_i\mapsto \Phi K_i\Phi\inv.}
\end{Thm}

\begin{proof}
The first statement is trivial.

For the second case, the transformation of $E_i$ is consistent by the remarks after Proposition \ref{phipreserve}, while to check that $\Phi$ correctly maps $F_i$ and $H_i$, it suffices to check their transformed action directly using Lemma \ref{qsum} on the coordinate system of the form
$$x_i(\a)x_j(\b)x_i(\c)\corr x_j(\a')x_i(\b')x_j(\c')$$ and $$x_k(\a)x_j(\b)x_k(\c)\corr x_j(\a')x_k(\b')x_j(\c'),$$ where $a_{ij}=a_{jk}=-1$ and $a_{ik}=0$.

\end{proof}

This proves Theorem \ref{Thmtrans}. In particular, we can now state the definition of the \emph{positive representations} in the Main Theorem of the introduction.

\begin{Def} Fix a reduced word $\bii$. The \emph{positive representations} $\cP_\l$ parametrized by $\l\in\R^{r}$ are the representations of $\cU_q(\g_\R)$ defined as follows. For each $i\in I$, the action of the generator $E_i$ is defined to be $\Phi E_i'\Phi\inv$, where $\Phi$ is a transformation by Theorem \ref{trans} corresponding to the change of words sending $\bii'$ to $\bii$ with $i_N'=i$ and $E_i'$ is the positive quantized action defined in Definition \ref{E} for $\bii'$. The actions of the generators $F_i$, $K_i$ are simply defined by the positive quantized action in Definitions \ref{F}, \ref{H} respectively. Furthermore, by Theorem \ref{trans}, positive representations corresponding to different choices of reduced words are unitarily equivalent.
\end{Def}

\begin{Cor}\label{corpos} The positive representations of the generators of $\cU_q(\g_\R)$ are realized by positive essentially self-adjoint operators on a dense domain $\cD\subset \cH:=L^2(\R^N)$, and they satisfy the transcendental relations \eqref{transcendenal}.
\end{Cor}
\begin{proof} Since $K_i=q^{H_i}$ is an exponential operator, up to a change of variables it is unitarily equivalent to $\cE:=e^{\pi b x_1}$, the multiplication operator in the first variable on $\cH$, which is essentially self-adjoint with the self-adjoint domain given by
\Eq{
Dom(\cE)=\{f\in \cH : e^{\pi b x_1}f\in \cH\},
} and the transcendental relations also follow from $(e^{\pi b x_1})^{\frac{1}{b^2}}=e^{\pi b\inv x_1}$ on $Dom(e^{\pi b\inv x_1})$.

The proof for $E_i$ and $F_i$ also follows from the argument in \cite{FI}. Recall that $E_i$ is defined using the simple expression in \eqref{EE}. In particular, both $E_i$ and $F_i$ are unitarily equivalent to an expression of the form
\Eq{\label{EFform}\frac{\bi}{q-q\inv}\sum_k (A_k^++A_k^-),}
where $A_k^\pm$ are $q^2$-commuting terms, in the exact same way as \cite[(3.33) in Theorem 3.4]{FI}. Hence they are further unitarily equivalent to $\cE$ by conjugating with a sequence of quantum dilogarithms and changes of variables successively.

In other words, for each generator $X=E_i,F_i$ of $\cU_q(\g_\R)$, there exists a unitary operator $\Phi_{X}:\cH\to \cH$ such that $Ad_{\Phi_{X}}(X)=\cE$, hence both $E_i$ and $F_i$ are essentially self-adjoint on the pullback of the domain of $\cE$ by $\Phi_X$ and the transcendental relations also follow as above.

Finally, we take $\cD$ to be the intersection of all the domains of polynomial expressions of the generators, defined by the pullback of the domain of the corresponding exponential operators using $\Phi_X$. $\cD$ is dense since it contains the entire functions from $\cW$ (cf. \eqref{entire}).
\end{proof}
In particular, for $\cU_q(\sl(2,\R))$ we recover the domain described by \cite{PT2}, which is obtained from conjugation of the domains of $e^{\pi b x}$ and $e^{\pi b p}$ by a single quantum dilogarithm, and the intersection of all the domains gives the analytic property of the domain of representation.

It is clear from the definition of the coproduct \eqref{coprodE}-\eqref{coprodK} that the two summands $q^2$-commute with each other, hence we also have the following corollary.
\begin{Cor}The coproduct of the generators acting on $L^2(\R^{2N})$ is also positive essentially self-adjoint, and satisfies the transcendental relations.
\end{Cor}

Finally we can recover the representations of the usual universal enveloping algebra by considering a quasi-classical limit:
\begin{Cor}\label{classreplimit}
We can read off the action of the classical $\cU(\g_\R)$ from the quantized action, where we consider identification of the form
$$\left[\frac{Q}{2b}+\frac{\bi }{b}P(\bu)\right]_qe^{2\pi b\bp} \to f(\bu)\mapsto\left(\frac{1}{2}+\bi P(\bu)\right)f(\bu-\bi \be).$$
Hence this gives the classical principal representations of $\cU(\g_\R)$ for any choice of reduced expression of $w_0$ as finite difference operators.
\end{Cor}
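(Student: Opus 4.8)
The plan is to treat the displayed arrow as the inverse of the quantization prescription of Definition \ref{posquan}, and to verify that reading the quantized generators backwards through this prescription reproduces exactly the classical finite difference operators already recorded in Propositions \ref{E}, \ref{F} and \ref{H}. Concretely, I would first isolate the source of the two ingredients on the right-hand side. Writing $\frac{Q}{2b}=\frac{1}{2b^2}+\frac12$ and using $q^{1/2b^2}=e^{\pi i/2}=i$ together with the identity $[x]_q=\frac{\sin(\pi b^2x)}{\sin(\pi b^2)}$, one sees that $\left[\frac{Q}{2b}+\frac{i}{b}P(\bu)\right]_q=\frac{\cos\left(\frac{\pi b^2}{2}+\pi i bP(\bu)\right)}{\sin(\pi b^2)}$; the divergent normalization $\frac{1}{\sin(\pi b^2)}\sim\frac{1}{\pi b^2}$ is the universal factor stripped in the classical limit, the half-integer shift in $\frac{Q}{2b}$ is precisely what produces the constant $\frac12$, and the linearization of the cosine together with the rescaling of variables by $ib$ (as already used before Proposition \ref{quanPhi}) converts $\frac{i}{b}P(\bu)$ into $iP(\bu)$. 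The shift operator $e^{2\pi b\bp}$, which moves $\bu\mapsto\bu-ib\be$, becomes the finite shift $\bu\mapsto\bu-i\be$ in the rescaled coordinate, giving $f(\bu-i\be)$.

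With the dictionary in place, I would apply it term by term to the explicit quantized actions \eqref{EE}, \eqref{FF} and \eqref{KK}. For $K_i=q^{H_i}$ this is the standard contraction $K=q^H\to 1+(\log q)H$: extracting the first order term in $b$ from \eqref{KK} returns the multiplication operator $H_i$ of \eqref{HH}. For $E_i$ and $F_i$ the dictionary returns precisely the Mellin transformed finite difference operators displayed in Propositions \ref{E} and \ref{F}, which by construction are the Mellin transform of the classical minimal principal series action of the opening Proposition of Section \ref{sec:def}. Hence the dequantized operators furnish the classical principal series representation of $U(\g_\R)$, realized as finite difference operators on the Mellin transformed space of $\C[U_{>0}^+]$.

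It remains to argue unitarity and independence of the reduced expression of $w_0$. Both descend from the quantum statements by taking the same classical limit of the intertwiner $\Phi$ of Proposition \ref{quanPhi}: the linear part $T$ is independent of $b$ and has determinant one, so it already implements a measure preserving (unitary) change of variables, while the quantum dilogarithm factors $g_b(e^{\pi b(\cdots)})g_b^*(e^{\pi b(\cdots)})$ degenerate in the semiclassical limit to the elementary factors that implement the classical Lusztig coordinate change \eqref{121}. Thus the classical representations attached to different words for $w_0$ are intertwined by the classical limit of $\Phi$, and unitarity of the limiting operators is inherited from the self-adjointness and unitarity established in the quantum setting.

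The main obstacle is that none of the three ingredients converges on its own: the $q$-bracket coefficient diverges like $\frac{1}{\pi b^2}$, and only the combination of coefficient and shift, after the $ib$-rescaling, admits a finite dequantization. Making this precise therefore requires checking that the divergent normalizations and the phases coming from $q^{1/2b^2}=i$ cancel consistently and identically across all of $E_i$, $F_i$ and $K_i$, and, for the equivalence of different words, that the semiclassical limit of the $g_b$-intertwiner converges in the strong (or resolvent) sense to the classical intertwiner rather than merely at the level of formal symbols. Once this bookkeeping is organized, the identification, and with it the claimed unitary classical principal series, follows.
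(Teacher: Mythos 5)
Your overall framing --- reading the displayed arrow as the inverse of the quantization prescription of Definition \ref{posquan} and applying it term by term to \eqref{EE}, \eqref{FF}, \eqref{KK}, with Theorem \ref{trans} guaranteeing that the action for \emph{every} reduced word is a sum of terms of the form $[\mbox{linear}]_q\,e(\mbox{linear})$ --- is exactly the argument the paper intends; the Corollary carries no separate proof beyond this. The genuine gap is in the analytic mechanism you propose for the dictionary itself. Your formula $\left[\frac{Q}{2b}+\frac{i}{b}P(\bu)\right]_q=\frac{\cos\left(\frac{\pi b^2}{2}+\pi i bP(\bu)\right)}{\sin(\pi b^2)}$ is correct, but it does \emph{not} follow that stripping $\frac{1}{\sin(\pi b^2)}$ and linearizing the cosine yields $\frac12+iP(\bu)$: with $P$ fixed the numerator is $1+O(b^2)$, and under the $ib$-rescaling of variables it becomes $\cos\left(\pi b^2(\tfrac12-P)\right)=1+O(b^4)$, so after removing the overall $\frac{1}{\pi b^2}$ the entire $P$-dependence sits in subleading orders (and enters quadratically, not as $\frac12+iP$). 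The divergence and the $P$-dependence are entangled inside the bracket and cannot be separated by any overall normalization or rescaling of $b$ alone; this is precisely the ``obstacle'' you name in your last paragraph and then leave unresolved, so by your own account the argument is incomplete at its crucial step.

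The missing ingredient is the one the paper invokes in the paragraph immediately following the Corollary: Theorem \ref{transLi}. Shifting each variable $u_k$ by the \emph{imaginary} amount $\frac{iQ}{2}\b_k$ cancels the additive $\frac{Q}{2b}$ inside every bracket simultaneously (since $\frac{i}{b}\cdot\frac{iQ}{2}=-\frac{Q}{2b}$), turning each weight into a standard quantum number $[P(\bu)]_q$ with no $\frac1b$-dependence; the limit $[P(\bu)]_q\to P(\bu)$ is then finite term by term, and undoing this complex shift is exactly what produces the $\frac12+iP(\bu)$ and the imaginary shifts $f(\bu-i\be)$ in the displayed identification (it is also what makes the subsequent irreducibility argument work). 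Once this is in place, your third paragraph becomes unnecessary: independence of the reduced word requires no semiclassical limit of $\Phi$ (with its unproven strong convergence of the $g_b$-factors to a classical intertwiner), because the quantum actions for all words are already constructed and of the required form \emph{before} any limit is taken, and the dictionary is applied to each of them directly.
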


More precisely, using Theorem \ref{transLi} in Section \ref{sec:conjectures}, we observe that under the shifting of the variables $u_k$ by $\frac{\bi Q}{2}\b_k$, each weight factor $\left[\frac{Q}{2b}+\frac{\bi}{b}P(\bu)\right]_q$ becomes the standard quantum number $[P(\bu)]_q$ and does not depend on $\frac{1}{b}$. In particular since the action of each $H_i$ under this shifting changes only by a constant, any invariant subspace will go under the classical limit to an invariant subspace of the classical principal series representations. Since we know that the classical principal series representations are irreducible, we have the following conclusion.
\begin{Prop} The positive representations $\cP_\l$ of $\cU_q(\g_\R)$ are irreducible for all $\l_i\in\R$.
\end{Prop}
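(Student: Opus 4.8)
\section*{Proof proposal}

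The plan is to prove irreducibility by degenerating the positive representation to the classical minimal principal series and invoking the irreducibility of the latter, following the mechanism indicated after Corollary \ref{classreplimit}. Concretely, suppose $V\subseteq L^2(\R^N)$ is a nonzero closed subspace invariant under the quantized action of every generator $E_i,F_i,K_i^{\pm1}$; I want to conclude $V=L^2(\R^N)$.

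First I would pass to the normalized coordinates supplied by Theorem \ref{transLi}, shifting each Mellin variable $u_k$ by $\frac{iQ}{2}\beta_k$. As noted in the text, this turns every quantized weight factor $\left[\frac{Q}{2b}+\frac{i}{b}P(\bu)\right]_q$ into an ordinary quantum integer $[P(\bu)]_q$ with no singular $\frac{1}{b}$ prefactor, while by Proposition \ref{H} each $H_i$ acts by multiplication by a linear function of $\bu$ and is altered only by an additive constant. In these coordinates the generators form a family depending on the single parameter $b$, with $E_i,F_i$ realized as finite sums of shift operators weighted by $q$-integers, exactly as in \eqref{EE} and Proposition \ref{F}.

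Next I would take the classical limit encoded in Corollary \ref{classreplimit}: under it $[P(\bu)]_q$ degenerates to $\frac12+iP(\bu)$ and the shift $e^{2\pi b\bp}$ to the finite difference operator $f(\bu)\mapsto f(\bu-i\be)$, so that the normalized generators converge, on a common dense core of entire rapidly-decreasing functions, to the generators of the Mellin-transformed classical minimal principal series of $U(\g_\R)$ at parameter $\lambda$. Since each $H_i$ changes only by a constant under the normalization, the orthogonal projection onto any invariant subspace commutes with the action throughout the family and hence in the limit; thus $V$ would descend to a nonzero proper closed invariant subspace of the classical principal series. As the classical minimal principal series is irreducible for all $\lambda\in\R^r$, this is impossible, forcing $V=L^2(\R^N)$.

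The main obstacle will be to make the degeneration rigorous at the level of unbounded self-adjoint operators rather than as the formal identification of Corollary \ref{classreplimit}. I would fix a single dense core stable under all shifts $f(\bu\pm i\be)$ (so that the requisite analytic continuations are unambiguous) and establish convergence of the normalized generators in the strong resolvent sense, which is what guarantees that the spectral projections, and therefore the projection onto $V$, survive the limit and commute with the limiting classical action. A second point requiring care is the irreducibility input itself: one needs the classical minimal principal series to be irreducible for \emph{every} real $\lambda$, not merely generic ones, so I would either appeal to the standard irreducibility theorems for split real semisimple groups together with the specific positivity and normalization here (which keep the parameter off the reducibility locus), or verify directly that the finite-difference realization obtained in the limit admits no nontrivial closed invariant subspace.
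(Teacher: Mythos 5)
Your proposal is correct and follows essentially the same route as the paper: the paper's own proof applies the shift $u_k\mapsto u_k+\frac{iQ}{2}\beta_k$ from Theorem \ref{transLi}, observes that every weight factor becomes a standard quantum number $[P(\bu)]_q$ independent of $\frac{1}{b}$ while each $H_i$ changes only by a constant, and concludes that any invariant subspace would descend under the classical limit of Corollary \ref{classreplimit} to an invariant subspace of the (irreducible) classical principal series. Your closing remarks on strong resolvent convergence and on irreducibility at every real $\lambda$ simply make explicit the analytic points the paper leaves implicit.
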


\begin{Rem}\label{highestweight}
If we do a ``Wick rotation" of the variables $\bu\mapsto -\bi b \bu$ and choose the parameters $\l_i$ discretely as $\l_i\in \frac{Q}{2}-ib\N$, then we can also recover the finite-dimensional highest weight representations of $\cU_q(\g)$ as a quotient of $\cP_\l$. Hence one can think of the positive representations as a kind of ``analytic continuation" of the finite-dimensional representations of the compact quantum groups. This observation is utilized e.g. in \cite{Ip7} in the study of positive Casimir operators.
\end{Rem}
\begin{Rem} In \cite{Wo} a theory of $C^*$-algebras generated by unbounded operators was developed in full generality. In this paper, we are treating only positive operators, so the structure is significantly simpler since we do not need to deal with the phases of the unbounded operators, and therefore can apply functional calculus to raise them to complex powers in order to produce the bounded operators needed for the computations.
\end{Rem}
\section{Proof of commutation relations}\label{sec:com}
In order to prove the commutation relations, we just need to pick a reduced word $\bii$ such that the action is simple enough for us to check the relations, since by Theorem \ref{trans} the commutation relations are preserved under conjugation of a unitary transformation $\Phi$ associated with different choice of reduced words. Again let $N=l(w_0)$ be the length of $\bii$.

\begin{Prop} We have $[E_i,F_i]=\frac{K_i-K_i\inv}{q-q\inv}=\frac{q^{H_i}-q^{-H_i}}{q-q\inv}$.
\end{Prop}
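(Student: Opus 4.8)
The plan is to exploit Theorem \ref{trans}: since the commutation relations are independent of the reduced expression chosen for $w_0$, I would fix the convenient word $w_0 = w_{l-1}s_i$ of Proposition \ref{E}, for which $E_i$ takes the two-term form \eqref{EE},
$$E_i = \frac{i}{q-q\inv}\left(e^{\pi b(u_i^1 - 2p_i^1)} + e^{\pi b(-u_i^1 - 2p_i^1)}\right),$$
involving only the single Heisenberg pair $(u_i^1, p_i^1)$. The decisive structural observation is that in the expression \eqref{FF} for $F_i$ this pair occurs only in the summand $k=1$: for $k \geq 2$ the inner sum $\sum_{l=k}^n 2u_i^l$ ranges over $l \geq 2$ and the shift variable is $p_i^k$, all of which commute with $u_i^1$ and $p_i^1$. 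Hence $E_i$ commutes with every $k \geq 2$ term, and the computation collapses to a rank-one commutator
$$[E_i, F_i] = \left[E_i,\; \frac{i}{q-q\inv}(A_1^+ + A_1^-)\right],$$
where $A_1^\pm$ are the two $k=1$ exponentials of \eqref{FF}.

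Next I would write $A_1^+ = e^{\pi b(u_i^1 + C' + 2p_i^1)}$ and $A_1^- = e^{\pi b(-u_i^1 - C' + 2p_i^1)}$, isolating the coefficient of $u_i^1$ (which is $2-1 = 1$) so that $C'$ collects all remaining variables and $2\l_i$, and commutes with $u_i^1, p_i^1$. Expanding the four products of the two $E_i$-exponentials against $A_1^\pm$, each is governed by the Weyl relation $e^A e^B = e^{[A,B]} e^B e^A$ with $[u_i^1, p_i^1] = -\frac{1}{2\pi i}$, the positivity normalization of Definition \ref{posquan}. A short bookkeeping of the skew pairings $ad-bc$ of the $(u,p)$-coefficients shows that the two mixed-sign pairings give $ad-bc = 0$ and therefore commute, while the two like-sign pairings give $ad-bc = \pm 4$ and contribute $q^{\pm 2}$-commutation factors. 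Combining these — all brackets being central, so that the Baker--Campbell--Hausdorff formula applies exactly and produces half-bracket shifts $e^{\pm \pi i b^2} = q^{\pm 1}$ — yields
$$[E_i, F_i] = \frac{e^{-\pi b(2u_i^1 + C')} - e^{\pi b(2u_i^1 + C')}}{q - q\inv}.$$

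Finally, I would identify the surviving exponent with $K_i$. Expanding $2u_i^1 + C' = 2\sum_{l} u_i^l - \sum_{l,j,m} u_j^{l,m} + 2\l_i$, and recalling that for simply-laced $\g$ the Cartan entries are $a_{ii} = 2$ and $a_{ij} = -1$ exactly for the neighbors $j$ of $i$ — which index precisely the variables $u_j^{l,m}$ — one sees $2u_i^1 + C' = \sum_k a_{i,r(k)} u_k + 2\l_i$. By \eqref{KK} this gives $e^{-\pi b(2u_i^1 + C')} = K_i$ and $e^{\pi b(2u_i^1 + C')} = K_i\inv$, hence $[E_i, F_i] = \frac{K_i - K_i\inv}{q - q\inv}$ as claimed; the identity $K_i = q^{H_i}$ follows from comparing \eqref{KK} with the Mellin action \eqref{HH}.

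The main obstacle I anticipate is neither conceptual nor in the reduction to the $k=1$ term, but in the sign and $q$-power bookkeeping of the middle step: one must track the correct sign of $[u_i^1, p_i^1]$ dictated by the positivity convention, since an error there flips $K_i \leftrightarrow K_i\inv$ and spoils the agreement with the standard quantum relation. A single uniform sign convention for the Heisenberg pairing throughout is what makes the two surviving like-sign terms assemble into the single factor $q - q\inv$ that cancels one power of the prefactor $\frac{i}{q-q\inv}$ in $E_i$ and $F_i$.
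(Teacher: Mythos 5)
Your proposal is correct and follows essentially the same route as the paper: fix the reduced word $w_0=w_{l-1}s_i$ (justified by Theorem \ref{Thmtrans}), observe that only the $k=1$ term of $F_i$ fails to commute with $E_i$, and then compute the surviving rank-one commutator, identifying the result with $K_i^{\pm1}$ via \eqref{KK}. The only difference is cosmetic: where you expand the four exponential products and track the Weyl-relation phases by hand, the paper packages the identical computation into the single $q$-number identity $[A]_q[B-1]_q-[A-1]_q[B]_q=[B-A]_q$, applied with the shifts $e^{\mp 2\pi b p_1}$ acting on the quantum-bracket arguments.
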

\begin{proof} Choose $\bii$ such that $i_N=i$. Consider the coordinates $u_j$ as in Definition \ref{labelling}. Then 
$$E_i=\left[\frac{Q}{2b}-\frac{\bi}{b}u_N\right]_qe^{-2\pi bp_N},$$ while the only term in $F_i$ not commuting with $E_i$ is the last term:
$$F_i=\left[\frac{Q}{2b}+\frac{\bi}{b}\left(\sum_{j=1}^{N-1}a_{i,i_j}u_j+u_N+2\l_i\right)\right]_qe^{2\pi bp_N}+\mbox{(commuting with $E_i$)},$$
and $H_i$ is of the form $$\frac{\bi}{b}\left(\sum_{j=1}^{N-1} a_{i,i_j}u_j+2u_N+2\l_i\right).$$
Then the commutation relation follows directly from the identity
\Eq{[A]_q[B-1]_q-[A-1]_q[B]_q=[B-A]_q.}
\end{proof}

\begin{Prop} We have $[E_i,F_j]=0$ if $i\neq j$.
\end{Prop}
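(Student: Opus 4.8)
The plan is to exploit Theorem~\ref{trans} once more: since the commutation relations are independent of the reduced expression of $w_0$, I choose $w_0=w_{l-1}s_i$ ending in $s_i$, exactly as in the proof of $[E_i,F_i]=\frac{K_i-K_i\inv}{q-q\inv}$. With this choice the action of $E_i$ is the simple operator
\Eq{E_i=\left[\frac{Q}{2b}-\frac{i}{b}u_i^1\right]_qe^{-2\pi bp_i^1},}
depending only on the single Heisenberg pair $(u_i^1,p_i^1)$ attached to the rightmost coordinate $x_i^1$ (which, as $w_0$ ends in $s_i$, is the absolute rightmost coordinate). The entire argument then reduces to showing that $F_j$ involves neither $u_i^1$ nor $p_i^1$.

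By Proposition~\ref{F}, the operator $F_j$ is built from the root-$j$ coordinates $u_j^k$ together with the shift operators $e^{2\pi bp_j^k}$, and from the adjacent coordinates $u_\ell^{k,m}$ with $\ell$ joined to $j$ in the Dynkin diagram; note that the only momentum operators appearing are the $p_j^k$, so $p_i^1$ is automatically absent. If $i$ is not adjacent to $j$ (and $i\neq j$), then no root-$i$ coordinate occurs in $F_j$ at all, whence $F_j$ acts on a collection of canonically conjugate variables entirely disjoint from $(u_i^1,p_i^1)$, and the commutation is immediate.

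The only case requiring attention is when $i$ and $j$ are adjacent. Here root-$i$ coordinates do enter $F_j$, but---by the indexing in Proposition~\ref{F}---precisely as the variables $u_i^{l,m}$ lying \emph{between} two consecutive occurrences $x_j^{l+1}$ and $x_j^{l}$ of $s_j$, with $l\geq 1$. Every such variable therefore sits to the left of the rightmost occurrence of $s_j$, and a fortiori to the left of the absolute rightmost coordinate $x_i^1$. Consequently $u_i^1$ is never among the $u_i^{l,m}$ occurring in $F_j$, so again $F_j$ acts only on Heisenberg pairs disjoint from $(u_i^1,p_i^1)$.

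In all cases $E_i$ and $F_j$ depend on disjoint families of canonically conjugate variables, and hence $[E_i,F_j]=0$. The one point requiring genuine care---and the only real obstacle---is the combinatorial claim that the rightmost coordinate $x_i^1$ is excluded from the expression for $F_j$; this is exactly the assertion that the adjacent variables feeding into $F_j$ lie strictly between successive occurrences of $s_j$, which is built into the construction of Proposition~\ref{F}.
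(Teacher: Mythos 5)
Your proof is correct and takes essentially the same route as the paper: the paper's own proof also chooses $w_0=w_{l-1}s_i$ and argues that $F_j$ involves no shifting in (and, implicitly, no dependence on) the variable $u_i^1$, so the two operators act on disjoint Heisenberg pairs. Your careful treatment of the adjacent case---verifying via the indexing of Proposition~\ref{F} that $u_i^1$ never occurs among the weight factors of $F_j$---simply spells out the step the paper dismisses as ``obvious.''
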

\begin{proof} Again choose $\bii$ such that $i_N=i$. Then it is obvious since $F_j$ does not involve any shifting of $p_k$ in the variables $u_k$ with $i_k=i$.
\end{proof}

\begin{Prop} We have $K_iE_j=q^{a_{ij}}E_jK_i$ and $K_iF_j=q^{-a_{ij}}F_jK_i$.
\end{Prop}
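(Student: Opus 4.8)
The plan is to invoke Theorem \ref{trans} so that it suffices to verify the two relations for one convenient reduced expression, and to take $w_0 = w_{l-1}s_i$ as in Proposition \ref{E}, for which $E_i$ has the simple one-variable form \eqref{EE}. With this choice every generator is assembled out of the multiplication operators $u_k$ and the shift operators $e^{\mp 2\pi b p_i^k}$, where by Definition \ref{posquan} the conjugate pairs obey the Heisenberg relation $[p_i^k, u_i^k] = \frac{1}{2\pi i}$ and all distinct pairs commute. Since $K_i = e^{-\pi b(\sum_k a_{i,r(k)}u_k + 2\l_i)}$ from Proposition \ref{H} (and \eqref{KK}) is a function of the $u$-variables alone, it commutes with every purely multiplicative factor occurring in $E_i$ and $F_i$, as well as with the scalar $e^{-2\pi b \l_i}$. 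Thus the whole computation collapses to conjugating the momentum shifts by $K_i$.

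First I would record the one Weyl identity needed: if $[A,B]$ is central then $e^{A}e^{B}e^{-A} = e^{B}e^{[A,B]}$. The only part of the exponent of $K_i$ that fails to commute with $e^{\mp 2\pi b p_i^k}$ is the $u_i^k$-term, whose coefficient is the diagonal Cartan entry $a_{i,i}=2$; so I take $A = -2\pi b\, u_i^k$ and $B = \mp 2\pi b\, p_i^k$ and compute
\Eq{[A,B] = (-2\pi b)(\mp 2\pi b)[u_i^k, p_i^k] = \pm (2\pi b)^2\Big(-\frac{1}{2\pi i}\Big) = \pm 2\pi i b^2,}
so that, because $q^2 = e^{2\pi i b^2}$,
\Eq{K_i\, e^{\mp 2\pi b p_i^k}\, K_i\inv = e^{\pm 2\pi i b^2}\, e^{\mp 2\pi b p_i^k} = q^{\pm 2}\, e^{\mp 2\pi b p_i^k}.}
All remaining $u_j$ in the exponent of $K_i$ commute with $p_i^k$ and contribute nothing.

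Applying this to $E_i = \frac{i}{q-q\inv}\big(e^{\pi b(u_i^1 - 2p_i^1)} + e^{\pi b(-u_i^1 - 2p_i^1)}\big)$, the multiplicative factors $e^{\pm \pi b u_i^1}$ pass through $K_i$ untouched while each shift $e^{-2\pi b p_i^1}$ acquires $q^2$, giving $K_i E_i K_i\inv = q^2 E_i$. Likewise, writing $F_i$ from Proposition \ref{F} (formula \eqref{FF}) as a sum of terms $g_k(\bu)\, e^{2\pi b p_i^k}$ with each $g_k$ a function of the $u$'s, every $g_k$ commutes with $K_i$ and every shift $e^{2\pi b p_i^k}$ acquires $q^{-2}$, giving $K_i F_i K_i\inv = q^{-2} F_i$. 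Rearranging these two identities yields $K_i E_i = q^2 E_i K_i$ and $K_i F_i = q^{-2} F_i K_i$.

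The only genuine content is sign bookkeeping, which I expect to be the main (and essentially sole) obstacle: one must correctly read off the direction of the shift from Definition \ref{posquan}, the sign of the Heisenberg commutator, and the fact that it is precisely the diagonal Cartan entry $a_{i,i}=2$ that converts the central term into $\pm 2\pi i b^2 = \log q^{\pm 2}$. Once the displayed Weyl identity is pinned down, both relations drop out at once, and by Theorem \ref{trans} they hold independently of the chosen reduced expression of $w_0$.
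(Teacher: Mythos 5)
Your proposal is correct and follows essentially the same route as the paper: fix the reduced expression $w_0=w_{l-1}s_i$, use the explicit formulas \eqref{EE}, \eqref{FF}, \eqref{KK}, and observe that $K_i$ commutes with all multiplication operators while each shift $e^{\mp 2\pi b p_i^k}$ picks up $q^{\pm 2}$ precisely because the diagonal Cartan entry $a_{ii}=2$ enters the exponent of $K_i$. The paper states this as ``immediate''; you have simply written out the central-commutator computation that makes it so, including the (correct) sign bookkeeping.
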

\begin{proof} For $E_j$, choose $\bii$ such that $i_N=j$. Then it is immediate from the expression of the action $E_j$ in \eqref{EE} and $K_i$ in \eqref{KK}. For $F_j$ it follows easily from the expression in \eqref{FF} since the only momentum operator appearing in the expression is $p_k$ with $i_k=j$, while $K_i$ has the exponential of $a_{i,j}u_k$.
\end{proof}

\begin{Prop} We have $[E_i,E_j]=[F_i,F_j]=0$ if $a_{ij}=0$.
\end{Prop}
\begin{proof} The case for $E_i$ follows by choosing the reduced word of the form $\bii=(...,i,j)$. (i.e. the reduced expression of $w_0$ is of the form $w_0=w_{N-2}s_is_j$, where $w_{N-2}:=w_0s_js_i\in W$ is necessarily of length $N-2$). The case for $F_i$ follows immediately from its expression in Definition \ref{F}, since the shiftings of the variables do not appear in one another.
\end{proof}
\begin{Prop} We have the Serre relations for $E_i$.
\end{Prop}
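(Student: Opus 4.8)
The relation to be proved is the quantum Serre relation for adjacent roots $\a_i,\a_j$ (the case of non-adjacent roots having just been disposed of), namely
\Eq{E_i^2E_j-(q+q\inv)E_iE_jE_i+E_jE_i^2=0.}
By Theorem \ref{trans} its validity does not depend on the reduced expression chosen for $w_0$, and since only $E_i$ and $E_j$ enter, it suffices to compute inside the rank two subsystem generated by $\a_i$ and $\a_j$, which in the simply-laced setting is of type $A_2$. The plan is therefore to choose a reduced word whose three rightmost letters realize this $A_2$ pattern, say $w_0=w's_is_js_i$; then the three rightmost coordinates carry the entire computation and all variables coming from $w'$ decouple.

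With this choice $w_0$ ends in $s_i$, so Proposition \ref{E} gives $E_i$ in its simplest form \eqref{EE},
\Eq{E_i=\frac{i}{q-q\inv}\left(e^{\pi b(u_i^1-2p_i^1)}+e^{\pi b(-u_i^1-2p_i^1)}\right),}
a sum of two positive $q^2$-commuting exponentials. To obtain $E_j$ in the same frame I would start from the braid-equivalent word $w_0'=w's_js_is_j$, in which $E_j$ is likewise of the simple form \eqref{EE}, and transport it by the unitary braid transformation $\Phi$ of Proposition \ref{quanPhi} according to $E_j\mapsto\Phi E_j\Phi\inv$. Normal-ordering this conjugation with Lemma \ref{qsum} expresses $E_j$ as a sum of two positive exponentials in the three rightmost coordinates, and I would record precisely the $q$-commutation relations between these two exponentials and the two building blocks of $E_i$.

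The concluding step is to insert the two two-term operators into the Serre combination, normal-order every resulting monomial with the recorded $q$-commutation relations, and verify that everything cancels. Since the combination is homogeneous and is controlled entirely by the $A_2$ data, this collapses to a finite identity among $q$-commuting exponentials; this is exactly the rank two verification already performed for $\mathfrak{sl}_3=A_2$ in \cite{FI}, so the general simply-laced case follows without change.

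I expect the genuine difficulty to lie in the middle step: producing the explicit form of $E_j$ after the braid move and fixing the exact normal-ordering constants in the $q$-commutation relations among the four exponential building blocks, because a single wrong constant would spoil the cancellation. Once those relations are correctly pinned down, the final cancellation is purely mechanical and identical to the computation in \cite{FI}.
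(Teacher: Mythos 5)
Your proposal is correct and takes essentially the same route as the paper: the paper likewise chooses $w_0=w_{l-3}s_is_js_i$ (possible since $l(w_{l-3})=l(w_0)-3$), observes that the actions of $E_i$ and $E_j$ then depend only on the three rightmost coordinates, and reduces the verification to the type $A_2$ case, which it declares easy to check. Your additional detail---obtaining $E_j$ by conjugating with $\Phi$ from the braid-equivalent word ending in $s_j$---is precisely how the action is defined in Theorem \ref{trans}, so the two arguments coincide.
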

\begin{proof} Let $a_{ij}=-1$. We choose the reduced word of the form $\bii=(...,i,j,i)$. (i.e. the reduced expression of $w_0$ is of the form $w_{N-3}s_is_js_i\in W$ where $w_{N-3}:=w_0s_is_js_i$ is necessarily of length $N-3$).  Now the action of $E_i$ and $E_j$ only depends on the first three variables, hence it follows immediately from the corresponding reduced expression for $w_0$ in the type-$A_2$ case which is easy to check.
\end{proof}

\begin{Prop} We have the Serre relations for $F_i$.
\end{Prop}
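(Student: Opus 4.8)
The identity to be proved is the quantum Serre relation for two adjacent roots,
\[
F_i^2F_j-[2]_qF_iF_jF_i+F_jF_i^2=0,
\]
where $[2]_q=q+q\inv$, the non-adjacent case being the vanishing $[F_i,F_j]=0$ already established. As for every other relation, Theorem \ref{trans} lets me verify this for one convenient reduced expression of $w_0$. The essential difficulty, absent in the $E$-case, is that $F_i$ cannot be localised to finitely many coordinates: by Proposition \ref{F} it is a sum of one term per occurrence of $s_i$ in $w_0$, and that multiplicity is intrinsic to $w_0$. So I cannot simply pass to the three-variable $A_2$ picture used for the $E$-Serre relation, and must work with the full multi-term operators.

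My plan is first to strip away everything that is inert. Writing $F_i=\frac{i}{q-q\inv}\sum_k(A_k^++A_k^-)$, and likewise for $F_j$, as in the Corollary to Theorem \ref{trans}, I note that $F_i$ shifts only the momenta $p_i^k$ and that its weights depend on $u_i^l$ together with $u_{j'}^{l,m}$ for the neighbours $j'$ of $i$, while $F_j$ shifts only $p_j^k$ and depends on $u_j^l$ and on the variables attached to the neighbours of $j$. Since finite-type simply-laced Dynkin diagrams are trees, any neighbour $j'\neq j$ of $i$ is \emph{not} a neighbour of $j$; hence the variables $u_{j'}$ are neither shifted by $F_j$ nor appear in it, commute with the entire computation, and may be frozen as central parameters. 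After this reduction the only surviving non-commutativity is between the $u_i$-data (shifted by $F_i$, on which $F_j$ depends) and the $u_j$-data (shifted by $F_j$, on which $F_i$ depends) --- exactly the interaction of the rank-two sub-diagram $\a_i$--$\a_j$.

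I would then choose $w_0$ so that the subword in the letters $s_i,s_j$ is the standard one, so that $F_i,F_j$ acting on the $u_i,u_j$ coordinates become \emph{identical} to the $F$-operators of the $\sl(n,\R)$ representation of \cite{FI} (with the frozen $u_{j'}$ merely shifting the weights by constants). The quantum Serre relation for these is then the one verified there: one collects the $A_k^\pm$ into $q^2$-commuting families, commutes the shifts $e^{2\pi bp_i^k}$ through the weights by Lemma \ref{qsum} and the elementary identity $[A]_q[B-1]_q-[A-1]_q[B]_q=[B-A]_q$, and checks that the three-term combination telescopes to zero. I would invoke this computation from \cite{FI} on the localised data, or equivalently carry out the $q$-commuting quantum-Serre identity by hand.

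The main obstacle is exactly this identification step: showing rigorously that, once the spectator variables are frozen, the paired action of $F_i$ and $F_j$ is formally the same type-$A$ interaction treated in \cite{FI}, with matching multiplicities and no residual cross-terms from the other neighbours of $i$. Once that is secured, the vanishing of the Serre combination is the same elementary $q$-integer telescoping as in the rank-two case; the bookkeeping of the many terms, rather than any new analytic input, is what makes it laborious.
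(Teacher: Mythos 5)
Your two preliminary reductions are sound: by Theorem \ref{trans} it suffices to treat one reduced expression, and the freezing of spectators is correct --- the variables $u_{j'}$ attached to the other neighbours $j'$ of $i$ occur only in the weights of $F_i$, are never shifted by $F_i$ or $F_j$ (which shift only $p_i^k$, $p_j^k$), and by the tree property of the diagram do not occur in $F_j$ at all, so they are central for the whole computation. The gap is exactly the step you flag as ``the main obstacle'': the claim that one can choose $w_0$ so that the $\{s_i,s_j\}$-subword of $w_0$ is the standard type-$A$ one, making the frozen $F_i,F_j$ \emph{literally identical} to the $F$-operators of \cite{FI}. You give no argument for this, and it is not true in any evident generality. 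In the standard $A_n$ word the letters $s_i,s_{i+1}$ alternate and have multiplicities differing by exactly one; by contrast, in the paper's word for $D_4$ the $\{2,3\}$-restriction is $2,2,2,3,2,2,3$ (multiplicities $(5,2)$), and in the paper's $E_8$ word the adjacent pair $(3,4)$ has multiplicities $(28,23)$ (Table 2). Braid moves do change multiplicities and patterns, but a braid move that fixes the multiplicity count simultaneously destroys alternation (e.g.\ $2,3,2,3,2,3\mapsto 2,2,3,2,2,3$), and nothing guarantees that an alternating pattern with consecutive multiplicities is ever reachable --- for a trivalent node the moves with the \emph{other} two neighbours keep injecting occurrences of $s_i$ into the $\{i,j\}$-pattern at non-alternating positions. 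Since your proof invokes \cite{FI} only through this literal operator identification, it does not go through as written.

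The paper's own proof shows how to bypass the identification entirely, and this is the repair your argument needs. It works with the general expression of Proposition \ref{F}, writes $F_i\cdot f(\bu)=\sum_k F_i^k(\bu)e^{2\pi b p_{i,k}}f(\bu)$, and observes that the Serre combination $F_i^2F_j-[2]_qF_iF_jF_i+F_jF_i^2$ reduces, triple of terms by triple of terms, to an identity $B_q(a,b,c)=0$ in the pairwise \emph{commutation factors} $(a,b,c)$ obtained by comparing each shift vector with each weight. The coefficient pattern of Proposition \ref{F} is universal: among the $u_i$-variables the weights carry coefficients $1$ and $2$ exactly as in type $A$, and for every pair $(u_i^k,u_j^{k'})$ exactly one of the two cross-coefficients is $-1$ and the other $0$, according to which variable lies to the left in $w_0$. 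Hence the triples $(a,b,c)$ that occur for an arbitrary simply-laced $\g$ and arbitrary interleaving pattern form the same finite set as those already occurring in the type-$A$ computation, and (3.10) of \cite{FI} applies verbatim --- no matching of multiplicities or of the word is needed. In short: replace ``the operators are the same as in type $A$'' by ``the pairwise commutation data, which is all the cancellation identity depends on, is the same as in type $A$''; with that substitution your frozen-variable picture becomes a correct proof.
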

\begin{proof}
Using the expression in Definition \ref{F}, let us rewrite the action of $F_i$ as
\Eq{F_i\cdot f(\bu)=\sum_{k:i_k=i}F_i^k(\bu) f(\bu-\bi b\be_k)=\sum_{k:i_k=i}F_i^k(\bu)e^{2\pi bp_k}f(\bu),}
where $\be_k$ denotes the standard vector with entry $1$ in the coordinate $u_k$ and 0 elsewhere. Then the Serre relations are equivalent to the vanishing of the commutation factor $B_q(a,b,c)$ which is exactly the same as \cite[(3.10) in proof of Theorem 3.1]{FI} with the same variables $a,b,c$ defined by comparing the shifts in $\be_k$ with $F_i^{k'}(\bu)$ for various $k$ and $k'$. Hence the Serre relations for $F_i$ follow.
\end{proof}

Therefore we have constructed the positive representations satisfying properties (i) and (ii) in the Main Theorem of the introduction. In order for it to be compatible with the modular double, we will give the definition of the modified quantum group $\bU_{\fq\til[\fq]}(\g_\R)$ in Section \ref{sec:modified}. But first let us write down explicitly the positive representations for quantum groups of type $A_n$, $D_n$ and discuss some calculations involving $E_n$.

\section{Explicit expression of positive representations}\label{sec:explicit}
In this section we give the explicit expression of positive representations for type $A_n$, $D_n$ and also comment on type $E_n$, where the explicit expression can be found in the Appendix.

First we introduce some notation:
\begin{Def}\label{labelling2}Let $(\bu)=(u_j)_{j=1}^N$ be the Mellin-transformed Lusztig coordinates defined in Definition \ref{labelling} corresponding to a reduced word $\bii=(i_1,...,i_N)$. We define a new indexing $(u_i^k)$ as
$$u_i^k:=u_j$$
where $i:=i_j,$ and $$k:=\#\{l:j\leq l, \mbox{ and } i_j=i_l\}.$$
In other words, $u_i^k$ is the $k$-th variable from the right of $\bii$ having root index $i$. We also denote the corresponding momentum operator by $p_i^k$.
\end{Def}
\begin{Ex} The coordinates for type $A_3$ corresponding to $\bii=(3,2,1,3,2,3)$ is given by
$$(u_3^3,u_2^2,u_1^1,u_3^2,u_2^1,u_3^1):=(u_1,u_2,u_3,u_4,u_5,u_6)$$
\end{Ex}
\subsection{Positive representations for type $A_n$}\label{sec:An}

In \cite{FI} we studied the positive representations of $\cU_q(\sl(n,\R))$ for the standard expression of $w_0$ using the ``cluster coordinates" of $U_{>0}^+$. These are the initial minors of $U_{>0}^+\subset SL^+(n+1,\R)$, which are the determinants of the square submatrices that start from the top row. Let $X_{i,j}$ denote the initial minors of the square submatrix with lower-right entry at $(i,j)$. We choose the standard reduced word
\Eq{
\bii=(n,n-1,...,3,2,1,n,n-1,...,3,2,n,n-1,...,3,...,n)
}
where we label the Dynkin diagram by
\begin{center}
  \begin{tikzpicture}[scale=.4]
    \draw[xshift=0 cm,thick] (0 cm, 0) circle (.3 cm);
    \foreach \x in {1,...,5}
    \draw[xshift=\x cm,thick] (\x cm,0) circle (.3cm);
    \draw[dotted,thick] (8.3 cm,0) -- +(1.4 cm,0);
    \foreach \y in {0.15,...,3.15}
    \draw[xshift=\y cm,thick] (\y cm,0) -- +(1.4 cm,0);
    \foreach \z in {1,...,5}
    \node at (2*\z-2,-1) {$\z$};
\node at (10,-1){$n$};
  \end{tikzpicture}
\end{center}

We have the following relations between $X_{i,j}$ and the Lusztig data $x_i^k$ (corresponding to the Mellin-transformed variable $u_i^k$ using the notation from Definition \ref{labelling2}).

\begin{Prop} The cluster coordinates $X_{i,j}$ and the Lusztig data $x_i^k$ are related by
\Eq{x_{i}^{j}=\frac{X_{j,i+1}X_{j-1,i-1}}{X_{j,i}X_{j-1,i}},}
\Eq{X_{i,i+j}=\prod_{l=1}^j \prod_{k=1}^i x_{k+l-1}^{k}.}
Here we denote $X_{i,i}=X_{i,0}=X_{0,j}=1$.
\end{Prop}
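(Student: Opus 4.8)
The statement relates two coordinate systems on $U_{>0}^+\subset SL^+(n+1,\R)$: the cluster coordinates $X_{i,j}$ (initial minors) and the Lusztig data $x_i^k$ attached to the standard reduced word for $w_0$. I would prove both formulas by establishing the second identity (the product formula for $X_{i,i+j}$) directly from the definition of the minors, and then derive the first identity (the expression for $x_i^j$) as a purely algebraic consequence of the second.

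First I would set up the explicit matrix. Writing the generic element of $U_{>0}^+$ as the ordered product $\prod x_{i_k}(a_k)$ in the chosen reduced expression \eqref{w0coord}, I would compute the resulting upper-triangular unipotent matrix $g=(g_{p,q})$ entrywise. The key structural observation is that in the standard word $w_0=s_n s_{n-1}\cdots s_1\, s_n\cdots s_2\,\cdots\, s_n$, the simple root $s_m$ appears in a triangular pattern, and the coordinate $x_{m+n-1}^n$ indexes exactly the root-reflection occurring in a predictable slot. I expect that the minor $X_{i,i+j}$, being the determinant of the sub-matrix anchored at the top rows with lower-right corner at $(i,i+j)$, factors as a monomial in the $x$'s precisely because the positivity/factorization structure of $U_{>0}^+$ makes each such initial minor a product of the Lusztig parameters lying in the corresponding rectangular region — this is the content of the double product $\prod_{m=1}^j\prod_{n=1}^i x_{m+n-1}^n$. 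The cleanest route is induction: I would fix the total length and induct on $i+j$ (or on the number of factors $x_{i_k}(a_k)$ appended), using the action of a single $x_i(a)$ on the matrix to track how each minor $X_{i,i+j}$ acquires exactly one new factor $x_{m+n-1}^n$.

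Once the product formula is in hand, the first identity follows by a direct ratio computation. I would form the quotient
\Eq{\frac{X_{j,i+1}X_{j-1,i-1}}{X_{j,i}X_{j-1,i}}}
and substitute the product formula into each of the four minors. The claim is then a telescoping/cancellation identity among the four double products: all factors $x_{m+n-1}^n$ cancel except the single surviving factor $x_i^j$. This reduces to a bookkeeping check on the index rectangles $\{(m,n):1\le m\le \bullet,\ 1\le n\le\bullet\}$ attached to $(j,i+1),(j-1,i-1),(j,i),(j-1,i)$, together with the boundary conventions $X_{i,i}=X_{i,0}=X_{0,j}=1$. I would verify that the symmetric difference of the rectangles (counted with the $\pm$ signs from numerator and denominator) is exactly the singleton yielding $x_i^j$.

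\textbf{The main obstacle.} The hard part will be the explicit combinatorics of the induction in the product formula: correctly matching the slot of each Lusztig parameter $x_{m+n-1}^n$ in the standard word to the entry it contributes in the anchored minor $X_{i,i+j}$, and confirming that appending one factor $x_{i_k}(a_k)$ augments each affected minor by precisely one new parameter (and leaves the others unchanged). Getting the indexing conventions exactly right — in particular the shift $m+n-1$ in the root index and the meaning of "$k$-th from the right" versus the left-to-right matrix multiplication order — is where all the genuine care is needed; the subsequent ratio identity is then a formal consequence.
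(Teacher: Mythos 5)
The paper itself states this Proposition without any proof (it is quoted as a fact and immediately used to translate the $A_n$ action of \cite{FI} into Lusztig coordinates), so your proposal has to stand on its own merits. Its overall architecture is reasonable, and your second step is genuinely correct and complete: writing $X_{a,c}=\prod_{m=1}^{c-a}\prod_{n=1}^{a}x_{m+n-1}^{n}$ for each of the four minors, the rectangles cancel exactly as you predict,
\Eq{\frac{X_{j,i+1}X_{j-1,i-1}}{X_{j,i}X_{j-1,i}}
=\frac{\prod_{n=1}^{j}x_{i-j+n}^{n}}{\prod_{n=1}^{j-1}x_{i-j+n}^{n}}
=x_{i}^{j},}
with the conventions $X_{i,i}=X_{i,0}=X_{0,j}=1$ covering the boundary cases. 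So the ratio formula is indeed a formal consequence of the product formula.

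The gap is in your first step, specifically in the claimed mechanism of the induction: that appending one factor $x_{i_k}(a_k)$ ``augments each affected minor by precisely one new parameter (and leaves the others unchanged).'' This is not how initial minors transform. Right multiplication by $x_i(a)=I+aE_{i,i+1}$ is the column operation $\mathrm{col}_{i+1}\mapsto \mathrm{col}_{i+1}+a\,\mathrm{col}_i$; an initial minor $X_{k,j}$ is unchanged unless its column window $\{j-k+1,\dots,j\}$ contains column $i+1$ but not column $i$, i.e.\ unless $j=i+k$, and in that case the update is \emph{additive}: $X_{k,i+k}\mapsto X_{k,i+k}+a\,\Delta_k$, where $\Delta_k$ is the minor on rows $1,\dots,k$ and the \emph{non-consecutive} columns $i,i+2,\dots,i+k$ --- not an initial minor. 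Consequently an initial minor of a partial product can jump from $0$ to a monomial of high degree in a single step: in $A_3$ with $w_0=s_3s_2s_1s_3s_2s_3$, the minor $X_{2,4}$ equals $0$ after the first four factors and becomes $x_1^1x_2^1x_2^2x_3^2$ the moment $x_2(x_2^1)$ is appended --- four parameters at once, not one. (Building the product from the left has the mirror defect, producing corrections indexed by non-initial \emph{row} sets.) So an induction that tracks only the initial minors cannot close. To repair it you must either enlarge the tracked family to include these exceptional minors, or switch to the planar-network argument (Lindstr\"om's lemma: minors are sums over non-intersecting path families, and for initial minors of the standard word the admissible family is unique, which is exactly why a single monomial survives), or run the induction on the rank by peeling off the whole string $s_ns_{n-1}\cdots s_1$, or simply invoke the Berenstein--Fomin--Zelevinsky chamber-ansatz factorization, of which both identities are a special case.
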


Since these become just linear transformations in the Mellin-transformed variables, we can write down explicitly the action of \cite{FI} in terms of the Lusztig coordinates.

To simplify notation, let us introduce
\begin{Def}\label{up} We denote
\Eq{[u]:=\left[\frac{Q}{2b}-\frac{\bi}{b}u\right]_q,\tab e(p):=e^{2\pi bp}.}
Then if $[p,u]=\frac{1}{2\pi \bi}$ we  have
\Eq{[u]e(-p)=\left[\frac{Q}{2b}-\frac{\bi}{b}u\right]_qe^{-2\pi bp}=\left(\frac{\bi}{q-q\inv}\right)(e^{\pi b(u-2p)}+e^{\pi b(-u-2p)}).}
which is positive self-adjoint. Note that $[u]e(-p)=e(-p)[-u]$.
\end{Def}

\begin{Thm} The actions of $E_i$, $F_i$ and $K_i$ are given by
\begin{eqnarray*}
E_i&=&\sum_{k=1}^{n-i+1} [u_{i+k-1}^{k}-u_{i+k}^{k}]e\left(\sum_{l=1}^k (p_{i+l-1}^{l-1}-p_{i+l-1}^{l})\right),\\
F_i&=&\sum_{k=1}^i\left[u_{i}^{k}-\sum_{l=k}^i(2u_{i}^{l}-u_{i-1}^{l}-u_{i+1}^{l+1})+2\l_i\right]e(p_{i}^{k}),\\
K_i&=&e^{\pi b(\sum_{k=1}^{i}(u_{i-1}^{k}+u_{i+1}^{k}-2u_{i}^{k})-2\l_i)},
\end{eqnarray*}
where $u_{i}^{k}=p_{i}^{k}=0$ if the indices are out of bound.
\end{Thm}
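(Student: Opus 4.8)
The plan is to derive the type $A_n$ formulas as a direct specialization of the general construction in Section~\ref{sec:def}, rather than re-deriving them from scratch. The key observation is that the three propositions (Prop.~\ref{E}, Prop.~\ref{F}, Prop.~\ref{H}) already give the quantized action of $E_i$, $F_i$, $K_i$ for \emph{any} reduced expression of $w_0$, so all that remains is to read off what these general formulas say for the standard expression \eqref{w0coord} displayed above and to confirm they match the Mellin-transformed cluster-coordinate formulas of \cite{FI}.

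First I would fix the combinatorics of the standard reduced expression $w_0=s_ns_{n-1}\cdots s_1\,s_ns_{n-1}\cdots s_2\,\cdots\,s_n$ and determine, for each root index $i$, exactly how many times $s_i$ appears (namely $i$ times if we count from the $A_n$ labelling, giving the range $k=1,\dots,i$ for $F_i$ and $k=1,\dots,n-i+1$ for $E_i$) and which variables $u_j^{l,m}$ of adjacent roots $j=i\pm1$ interleave the $s_i$-occurrences. This is purely a bookkeeping step, but it is what pins down the precise index ranges and the shift vectors $p_i^k$ appearing in the three sums. Second, for $E_i$ I would \emph{not} use the simplest expression $w_0=w_{l-1}s_i$ of Prop.~\ref{E}, but rather transport the action to the standard $w_0$ using the unitary transformation $\Phi$ of Theorem~\ref{trans} (equivalently Proposition~\ref{quanPhi}); since each elementary move is the involution $\phi$ on a copy of $GL_q^+(3,\R)$ with explicit $g_b$-conjugation, the telescoping structure $e\bigl(\sum_{l=1}^k(p_{i+l-1}^{l-1}-p_{i+l-1}^{l})\bigr)$ should emerge from composing these moves.

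For $F_i$ and $K_i$ the task is lighter: Proposition~\ref{F} already writes $F_i$ as a sum over $k$ of weight factors $\bigl[\tfrac{Q}{2b}+\tfrac{i}{b}(\cdots)\bigr]_q e(p_i^k)$, and Proposition~\ref{H}/\eqref{KK} gives $K_i$ as a single exponential of a linear form in the $u$'s weighted by the Cartan matrix. Specializing the Cartan matrix of $A_n$ (so $a_{i,i}=2$, $a_{i,i\pm1}=-1$) and substituting the interleaving pattern found in the first step turns the abstract sum $\sum_{l=k}^n(2u_i^l-\sum_{j,m}u_j^{l,m})$ into the concrete $\sum_{l=k}^i(2u_i^l-u_{i-1}^l-u_{i+1}^{l+1})$, where the shifted upper index $l+1$ on $u_{i+1}$ reflects exactly how the root $i+1$ occurrences sit relative to those of root $i$ in $w_0$. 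I would verify the sign and shift conventions against the $[u]$ and $e(p)$ notation just introduced, using the identity $[u]e(-p)=e(-p)[-u]$ to reconcile the two displays.

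The main obstacle I expect is the $E_i$ computation: tracking the cumulative effect of the $\Phi$-conjugations across the standard $w_0$ and showing that the $g_b$-factors cancel to leave a clean sum of quantum-number terms $[u_{i+k-1}^k-u_{i+k}^k]$ with the precise telescoping shift in the $p$-variables. In principle the cleaner route is to invoke the already-established equivalence with the cluster-coordinate action of \cite{FI} together with the explicit linear change of variables $x_i^j=X_{j,i+1}X_{j-1,i-1}/(X_{j,i}X_{j-1,i})$ from the preceding Proposition; since this substitution is linear after the Mellin transform, it is merely a change of basis in the Heisenberg variables $(u,p)$, and one transports the known formulas of \cite{FI} directly. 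The real work is bookkeeping—checking that the index ranges and shift vectors produced by either route agree with the stated formulas—and confirming positivity and self-adjointness, which are automatic from the structure $[u]e(-p)$ recorded in the Definition above.
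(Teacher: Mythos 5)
Your proposal is correct, and your ``cleaner route'' is in fact exactly the paper's proof: the theorem carries no separate proof precisely because the preceding Proposition, $x_{i}^{j}=X_{j,i+1}X_{j-1,i-1}/(X_{j,i}X_{j-1,i})$, becomes a \emph{linear} substitution after the Mellin transform, so the explicit cluster-coordinate action of \cite{FI} is transported verbatim into Lusztig coordinates; in particular the $E_i$ formula, with its telescoping shift $e\bigl(\sum_{l=1}^{k}(p_{i+l-1}^{l-1}-p_{i+l-1}^{l})\bigr)$, is inherited from \cite{FI} rather than recomputed. Your primary route --- specializing Propositions \ref{F} and \ref{H} to the standard word and obtaining $E_i$ by conjugating the one-term expression of Proposition \ref{E} by the unitaries of Proposition \ref{quanPhi} --- is genuinely different and also sound: it is the same mechanism the paper itself uses for types $D_n$ and $E_n$ via the rule \eqref{rule}, and for $F_i$ and $K_i$ it is essentially immediate, since Propositions \ref{F} and \ref{H} are already stated in Lusztig coordinates for an arbitrary reduced word; your identification of the interleaving pattern (that $u_{i-1}^{l}$ and $u_{i+1}^{l+1}$ are the adjacent-root variables sitting between $x_i^{l+1}$ and $x_i^{l}$ in the standard word) is exactly the bookkeeping that produces the sum $\sum_{l=k}^{i}(2u_i^l-u_{i-1}^l-u_{i+1}^{l+1})$. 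What the paper's route buys is that the hardest step of yours --- showing that the composition of $\Phi$-moves collapses to the $n-i+1$ clean terms $[u_{i+k-1}^{k}-u_{i+k}^{k}]$ with telescoping momenta, which you only assert ``should emerge'' --- never has to be carried out; what your route buys is self-containedness (no appeal to the formulas of \cite{FI}) and uniformity with the later $D_n$ and $E_n$ computations. To turn your primary route into a complete proof you would still need to perform that induction on $\Phi$-moves for $E_i$; as written, your text is a plan for the induction rather than the induction itself, so if you want to avoid it, lead with the transport argument and cite \cite{FI}, as the paper does.
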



\subsection{Positive representations for type $D_n$}\label{sec:Dn}
For each choice of reduced words $\bii$ for $w_0$, we have a positive representation. In general, however, it seems that there exists the ``best" choice of expression so that the representation is ``minimal" in the sense that the total number of terms is minimized. We conjecture that this can be written in the form:
\Eq{w_0=w_{t_1}w_{t_2}...w_{t_n}}
so that $w^k:=w_{t_1}...w_{t_k}$ gives the longest element of the group corresponding to a connected Dynkin subdiagram with nodes $t_1,...,t_k$, with $l(w^k)$ as long as possible.
\begin{Rem} The longest word $w_0$ chosen here and the next subsection coincides with the decomposition of $w_0$ used in \cite{Li} which is called ``nice" (except for type $E_8$), where the sequence $(t_1,..., t_n)$ above forms a ``good enumeration" associated to a sequence of certain descending Levi subalgebras. It was shown that the longest word compatible with this sequence will be unique up to a change of reflections $s_is_j=s_js_i$ which corresponds to a change of variable indices in the positive representations. We thank the referee for pointing out this reference.
\end{Rem}

Let the labeling of the Dynkin diagram for $D_n$ be
\begin{center}
  \begin{tikzpicture}[scale=.4]
    \draw[xshift=0 cm,thick] (0 cm, 1) circle (.3 cm);
    \draw[xshift=0 cm,thick] (0 cm, -1) circle (.3 cm);
    \foreach \x in {1,...,5}
    \draw[xshift=\x cm,thick] (\x cm,0) circle (.3cm);
    \draw[dotted,thick] (8.3 cm,0) -- +(1.4 cm,0);
   \draw[xshift=0.25 cm] (0 cm,1) -- +(1.4 cm,-1);
   \draw[xshift=0.25 cm] (0 cm,-1) -- +(1.4 cm,1);   
 \foreach \y in {1.15,...,3.15}
    \draw[xshift=\y cm,thick] (\y cm,0) -- +(1.4 cm,0);
    \foreach \z in {2,...,5}
    \node at (2*\z-2,-1) {$\z$};
\node at (10,-1){$n-1$};
\node at (-1,-1){$0$};
\node at (-1,1){$1$};
  \end{tikzpicture}
\end{center}

For type $D_n$, the corresponding sequence is 
$$(t_1,...,t_n)=(2,1,0,3,4,5,6,7,...,n-1)$$
Then $w_0$ of length $n(n-1)$ corresponds to the reduced word
\Eq{\label{w0simplify}\bii=(2\;12\;012\;\;320123\;\;43201234\;\;...\;\;(n-1)...43201234...(n-1)) }
where we omit the commas in $\bii$ for typesetting purposes.

However, we will transform $(212012...)$ at the beginning of $\bii$ to $(012012...)$ so that the expression can be made symmetric in $E_0$ and $E_1$, but with slightly more terms. This will become useful in the study of the non-simply-laced case \cite{Ip4} due to the \emph{folding} operation.

The explicit formula for the positive representations can actually be carried out involving only the simple transformation rule
\Eq{\label{rule}\Phi:[w]e(-p_w)\mapsto [u]e(-p_u-p_v+p_w)+[v-w]e(-p_v)}
using Proposition \ref{quanPhi}, and we obtain the following general formula by a series of inductions.

\begin{Thm}\label{ThmDn} The positive representations for $\cU_q(\g_\R)$ of type $D_n$, parametrized by $\l\in \R^r$, are, for $i=0$ or $1$,
\begin{eqnarray*}
E_i&=&\sum_{k=1}^{n-1}[u_{\over[k+i-1]}^{k}-u_{2}^{2k-1}] e\left(\sum_{l_0=1}^{s_1(k)}(-1)^{l_0}p_{i}^{l_0}-\sum_{l_1=1}^{s_2(k)}(-1)^{l_1}p_{1-i}^{l_1}-\sum_{l_2=1}^{2k-2}(-1)^{l_2}p_{2}^{l_2}\right)\\
&&+\sum_{k=1}^{n-2}[u_{2}^{2k}-u_{\over[k+i]}^{k}]e\left(\sum_{l_0=1}^{s_1(k)}(-1)^{l_0}p_{i}^{l_0}-\sum_{l_1=1}^{s_2(k)}(-1)^{l_1}p_{1-i}^{l_1}-\sum_{l_2=1}^{2k}(-1)^{l_2}p_{2}^{l_2}\right)
\end{eqnarray*}
and for $i\geq2$,
$$E_i=\sum_{k=1}^{2n-2i-1} [(-1)^k(u_{i+1}^{k}-u_{i}^{k})]e\left(\sum_{l_0=1}^{s_1(k)}(-1)^{l_0}p_{i}^{l_0}-\sum_{l_1=1}^{s_2(k)}(-1)^{l_1} p_{i+1}^{l_1} \right),$$
where $\over[k]:=k \mbox{ (mod 2)}\in\{0,1\}$, and
\begin{eqnarray*}
s_1(k):=2\left\lceil\frac{k}{2}\right\rceil  -1&,&s_2(k):=2\left\lfloor\frac{k}{2}\right\rfloor.
\end{eqnarray*}

Note that the dependence of $\l_i$ is hidden in the actions of $F_i$ and $K_i$, which are given as before by Definitions \ref{F} and \ref{H}, and as before we ignore the variables where the indices are out of bound (see Table 2 in the next section for the restrictions of $m$ for $u_{i}^{m}$).
\end{Thm}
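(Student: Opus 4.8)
The plan is to establish Theorem \ref{ThmDn} by a careful induction that mirrors the combinatorial structure encoded in the chosen reduced expression
\Eq{w_0=(2\;12\;012\;\;320123\;\;43201234\;\;...\;\;(n-1)...43201234...(n-1)).}
The key observation is that the transformation rule \eqref{rule}, namely
\Eq{\Phi:[w]e(-p_w)\mapsto [u]e(-p_u-p_v+p_w)+[v-w]e(-p_v),}
is the \emph{only} analytic ingredient needed: each braid move $s_js_ks_j\to s_ks_js_k$ for adjacent nodes contributes exactly one application of $\Phi$ (via Proposition \ref{quanPhi}), and each commuting move $s_js_k\to s_ks_j$ for non-adjacent nodes is a trivial coordinate swap by Theorem \ref{trans}. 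So the whole proof reduces to bookkeeping: starting from the ``good" expression $w_0=w_{l-1}s_i$ where $E_i$ has the simple one-term form \eqref{EE}, I would track how repeated applications of \eqref{rule} generate the summands appearing in the stated formulas.

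\textbf{First}, I would set up the induction on the rank $n$, using the nested structure $w^k=w_{t_1}\cdots w_{t_k}$ in which each $w^k$ is the longest element of the connected sub-diagram on nodes $t_1,\dots,t_k$. This means the $D_{n-1}$ formula should be recoverable from the $D_n$ formula by deleting the outermost block, so the inductive step amounts to understanding what the final block $(n-1)\cdots43201234\cdots(n-1)$ adds to the action. \textbf{Second}, for the cases $i\geq 2$ I would verify the formula
$$E_i=\sum_{k=1}^{2n-2i-1} [(-1)^k(u_{i+1}^{k}-u_{i}^{k})]e\left(\sum_{l_0=1}^{s_1(k)}(-1)^{l_0}p_{i}^{l_0}-\sum_{l_1=1}^{s_2(k)}(-1)^{l_1} p_{i+1}^{l_1} \right)$$
directly by tracking the alternating sign pattern: each pass of $\Phi$ through a pair of adjacent roots flips the relevant momentum sign and increments the upper index, which explains both the $(-1)^k$ prefactors and the ceiling/floor bounds $s_1(k)=2\lceil k/2\rceil-1$, $s_2(k)=2\lfloor k/2\rfloor$ on the momentum sums. \textbf{Third}, the cases $i=0,1$ require the extra symmetrization step (converting $212012$ to $012012$) flagged in the text; here I would treat $E_0$ and $E_1$ on equal footing and verify that the symmetrized block produces the two separate sums (over $k=1,\dots,n-1$ and $k=1,\dots,n-2$) with the mixed momentum dependence on $p_i$, $p_{1-i}$, and $p_2$.

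\textbf{The main obstacle} will be the bookkeeping for $i=0,1$: the branching node of the $D_n$ diagram means that the trivalent vertex at node $2$ interacts with both $E_0$ and $E_1$, so a single application of \eqref{rule} near the fork produces terms whose momentum arguments mix three families of variables $p_i, p_{1-i}, p_2$ rather than two. Verifying that the alternating sums telescope into exactly the stated $s_1(k), s_2(k)$ bounds — and that the symmetrization between $E_0$ and $E_1$ is consistent — is where the genuine combinatorial content lies. The cleanest route is probably to prove a \emph{local} lemma describing the output of one $\Phi$-move at the fork, then iterate; the unitarity established in Theorem \ref{trans} guarantees that the final answer is independent of the order in which moves are performed, so I would exploit this freedom to choose, at each stage, the move that keeps the expression in closed form. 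Once the pattern for small $n$ (say $D_4$ and $D_5$) is checked against the formula by hand, the induction on $n$ should close without further surprises, and the actions of $F_i$ and $K_i$ follow immediately from Propositions \ref{F} and \ref{H} as stated.
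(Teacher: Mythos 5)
Your proposal follows essentially the same route as the paper: the paper likewise derives Theorem \ref{ThmDn} by fixing the ``good'' reduced word $w_0$ (with the symmetrization $212012\to 012012$), starting from the simple one-term action of Proposition \ref{E}, and repeatedly applying the single transformation rule \eqref{rule} coming from Proposition \ref{quanPhi}, closing the formula by a series of inductions. Your additional structuring of the induction (nested sub-diagrams, a local lemma at the trivalent node, hand-checks for $D_4$, $D_5$) is just a more explicit account of the same bookkeeping the paper compresses into ``we obtain the following general formula by a series of inductions.''
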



\subsection{Positive representations for type $E_n$}\label{sec:E}
We follow the same strategy as in type $D_n$, by choosing a ``good" expression for $w_0$ coming from adding nodes successively to the Dynkin diagram: $$w_0=w_{t_1}w_{t_2}...w_{t_n}.$$
It turns out that the best case is obtained by the embedding
$$D_4\sub D_5\sub E_6\sub E_7\sub E_8,$$
where we label the Dynkin diagram by the following index
\begin{center}
  \begin{tikzpicture}[scale=.4]
    \draw[xshift=0 cm,thick] (0 cm, 0) circle (.3 cm);
    \foreach \x in {1,...,6}
    \draw[xshift=\x cm,thick] (\x cm,0) circle (.3cm);
    \foreach \y in {0.15,...,5.15}
    \draw[xshift=\y cm,thick] (\y cm,0) -- +(1.4 cm,0);
    \foreach \z in {1,...,7}
    \node at (2*\z-2,1) {$\z$};
\draw[xshift=0 cm,thick] (4 cm, -2) circle (.3 cm);
  \draw[xshift=0 cm] (4 cm,-0.25) -- +(0 cm,-1.5);
\node at (4,-3){$0$.};
  \end{tikzpicture}
\end{center}
The reduced words $\bii$ used, together with the sequence $(t_1,t_2,...,t_n)$ are
\begin{eqnarray*}
\bii(E_6)&=&(4\;34\;034\;230432\;12340321\;5432103243054321),\\
(t_1,..,t_6)&=&(4,3,0,2,1,5);\\
\bii(E_7)&=&(4\;34\;034\;230432\;12340321\;5432103243054321\;654320345612345034230123456),\\
(t_1,..,t_7)&=&(4,3,0,2,1,5,6);\\
\bii(E_8)&=&(4\;34\;034\;230432\;12340321\;5432103243054321\;654320345612345034230123456\\
&&765432103243546503423012345676543203456123450342301234567),\\
(t_1,..,t_8)&=&(4,3,0,2,1,5,6,7).
\end{eqnarray*}

The explicit actions of $E_i$ are given in the Appendix, which is reproduced from \cite{Ip3}. Here we summarize the numerical results in the following theorem.
\begin{Thm}\label{NumTerms} We have constructed the positive representation for a simply-laced quantum group, where the number of terms of the form $[\bu]e(\bp)$ in the actions of $E_i$ and $F_i$ are summarized in Tabel 1.

{\footnotesize
$$\begin{array}{c|c|c|c|c|c|}
generator&A_n&D_n&E_6&E_7&E_8\\
\hline
E_0&&2n-3&9&15&27\\
E_1&n&2n-3&1&11&23\\
E_2&n-1&2n-5&11&13&25\\
E_3&n-2&2n-7&10&16&28\\
E_4&\vdots&2n-9&7&17&29\\
E_5&\vdots&2n-11&5&7&19\\
E_6&\vdots&\vdots&&1&23\\
E_7&\vdots&\vdots&&&1\\
\vdots&\vdots&\vdots&&&\\
E_k&n-k+1&2n-2k-1&&&\\
\vdots&\vdots&\vdots&&&\\
E_{n-1}&2&1&&&\\
E_{n}&1&&&&\\
\hline
Total:&\frac{n(n+1)}{2}&n^2-2&43&80&175
\end{array}$$}
\begin{center}Table 1. Number of terms for the action of the generators $E_i$\end{center}

The number of terms in $F_i$ corresponds to the number of $i$ appearing in $\bii$, see Table 2.
{\footnotesize
$$\begin{array}{c|c|c|c|c|c|}
generator&A_n&D_n&E_6&E_7&E_8\\
\hline
F_0&&n-1&5&8&14\\
F_1&1&n-1&4&6&10\\
F_2&2&2n-4&7&11&19\\
F_3&3&2n-6&10&16&28\\
F_4&\vdots&2n-8&8&13&23\\
F_5&\vdots&2n-10&2&6&14\\
F_6&\vdots&\vdots&&3&9\\
F_7&\vdots&\vdots&&&3\\
\vdots&\vdots&\vdots&&&\\
F_k&k&2n-2k&&&\\
\vdots&\vdots&\vdots&&&\\
F_{n-1}&n-1&2&&&\\
F_{n}&n&&&&\\
\hline
Total:&\frac{n(n+1)}{2}&n(n-1)&36&63&120
\end{array}$$}
\begin{center}Table 2. Number of terms for the action of the generators $F_i$\end{center}
\end{Thm}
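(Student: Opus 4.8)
The plan is to read each column off an explicit normal form for the action and to justify that normal form by propagating the one‑term (resp.\ ``$\#$-of-$s_i$''-term) expressions through the braid moves relating reduced words. The two tables rest on two independent mechanisms, so I would treat them separately. For Table~2 the count is immediate from Proposition~\ref{F}: for \emph{any} reduced expression the action of $F_i$ is a sum $\sum_{k=1}^{m}[\,\cdots\,]e(p_i^k)$ indexed by the occurrences of $s_i$ in $w_0$, where $m$ is exactly the number of letters $s_i$. The shift operators $e(p_i^k)$ move distinct coordinates $u_i^k$, so no two summands can merge and the count is exact, not merely an upper bound. It then remains to count the letters $s_i$ in each chosen word: for $A_n$ the block structure $s_n\cdots s_1\mid s_n\cdots s_2\mid\cdots\mid s_n$ shows $s_i$ occurs once per block reaching index $i$, giving the linear column; for $D_n$ and for $E_6,E_7,E_8$ I would count directly in the displayed words (using the symmetrized variant $012012$ in the $D_n$ case, which is precisely why $F_0$ and $F_1$ agree). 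Summing each column reproduces the totals, and for every type that total equals $l(w_0)$ since it counts all letters of $w_0$.

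For Table~1 I would start from the simplest realization: by Proposition~\ref{E}, in a reduced word ending in $s_i$ the action of $E_i$ is the single term $[u_i^1]e(-p_i^1)$. To obtain $E_i$ in the fixed ``good'' word I conjugate by the composite unitary $\Phi$ of Theorem~\ref{trans} realizing the sequence of elementary moves between the two words. A commutation move $s_js_k\corr s_ks_j$ with $j,k$ non-adjacent merely permutes coordinates and leaves the number of terms unchanged; a braid move $s_js_ks_j\corr s_ks_js_k$ with $j,k$ adjacent acts through Proposition~\ref{quanPhi}, and on the support of the current expression it replaces a single term by two according to the splitting rule \eqref{rule}, $[w]e(-p_w)\mapsto[u]e(-p_u-p_v+p_w)+[v-w]e(-p_v)$, while terms not depending on the relevant coordinate pass through untouched. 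Hence the number of terms in $E_i$ equals $1$ plus the number of braid moves that actually act on the evolving support, a purely combinatorial quantity attached to the chosen word. Because every term has the manifestly positive form $[\bu]e(\bp)$ with distinct exponent data, again no cancellation occurs and the count is exact.

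For $A_n$ and $D_n$ this propagation closes into the explicit formulas already established in Section~\ref{sec:An} and in Theorem~\ref{ThmDn}, which I may assume: the $A_n$ action has $n-i+1$ summands, while the $D_n$ action has $2n-2i-1$ summands for $i\geq 2$ and $2n-3$ for the fork nodes $i=0,1$, from which the corresponding columns are read off. The underlying closed forms are themselves proved by induction on the rank along the nested words $w_0=w_{t_1}\cdots w_{t_n}$, showing that adjoining the node $t_k$ contributes a fixed increment of new braid splittings to each existing $E_i$. The exceptional column is the main obstacle: there is no closed form for $E_6,E_7,E_8$, so the entries (e.g.\ $9,1,11,10,7,5$ for $E_6$) and the totals $43,80,175$ must be obtained by running the transformation algorithm explicitly along the stated embeddings $D_4\subset D_5\subset E_6\subset E_7\subset E_8$, tracking at each braid move which terms split — precisely the (computer-assisted) computation whose output is tabulated in the appendix of \cite{Ip3}.

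The delicate point throughout, and the reason the numbers are assertions rather than bookkeeping, is to confirm that the estimate ``$1+\#\text{splittings}$'' is attained with equality: two distinct summands must never acquire identical exponent data after a sequence of moves, and no summand may silently drop out. For the infinite families this is transparent from the explicit formulas; for the exceptional types it is guaranteed by the positivity and distinctness of the exponents noted above, combined with the explicit run, so that the tabulated counts are exact. Finally I would cross-check every column against the totals — and, as an additional sanity check in type $A_n$, against the coincidence that the $E$-total equals $l(w_0)=\tfrac{n(n+1)}{2}$, a coincidence which, as the $D_n$ total $n^2-2\neq l(w_0)$ shows, is special to type $A$.
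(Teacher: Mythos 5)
Your proposal follows essentially the same route as the paper: Table 2 is exactly Proposition \ref{F} (one term per occurrence of $s_i$ in $w_0$) plus letter-counting in the displayed reduced words, and Table 1 is obtained by propagating the one-term action of $E_i$ through the elementary moves via Proposition \ref{quanPhi} and the splitting rule \eqref{rule}, with the closed forms of Section \ref{sec:An} and Theorem \ref{ThmDn} covering $A_n$ and $D_n$, and the explicit computer-assisted run along $D_4\sub D_5\sub E_6\sub E_7\sub E_8$ (recorded in the appendix of \cite{Ip3}) covering the exceptional types. The paper itself offers no more detailed argument than this, so your write-up is a faithful, in fact somewhat more explicit, reconstruction.

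One imprecision should be corrected. Your claim that a braid move acting on the support always ``replaces a single term by two'' via \eqref{rule}, so that the count is ``$1$ plus the number of effective braid moves, a purely combinatorial quantity attached to the word,'' is not a general fact: the paper's remark immediately following Theorem \ref{NumTerms} exhibits a braid move producing \emph{three} terms, one of which carries a coefficient $[2]_q$ and is therefore not of the form $[\bu]e(\bp)$ at all. That only the simple two-way rule \eqref{rule} ever fires is a special property of the chosen ``good'' words, and it is established only by the explicit run itself; your appeal to positivity and distinctness of exponent data rules out merging and cancellation of terms, but it cannot rule out three-way splits. Since you ultimately defer to the explicit computation for the exceptional types (as the paper does), this does not break the proof, but the ``purely combinatorial'' characterization should be demoted from an a priori lemma to an observed outcome of the computation.
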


Using Corollary \ref{classreplimit}, these explicit expressions of positive representations for $\cU_q(\g_\R)$ give explicitly the classical principal series representations for $\cU(\g_\R)$.

\begin{Rem} In the above choice for $\bii$, only the simple transformation rule \eqref{rule} is involved. In general, however, things can get complicated. In particular we can have, for example,
$$\Phi:[w-u]e(p_v-p_w)\mapsto[v-2w]e(p_u-p_w)+[2]_q[u-w]e(p_w-p_v)+[2u-v]e(2p_w-p_u-p_v).$$
This even happens, for example, in the case of type $A_3$ with $\bii=(132132)$. Therefore we see that the quantization rule $(2u)\mapsto [2u]$ holds only in the initial setup described previously, and it is not always true in general.
\end{Rem}
\begin{Rem}
The number of terms can get really large if the choice for the reduced word of $\bii$ is not optimal, due to the transformation rule which produces new terms exponentially. For example, in type $D_n$ or $E_n$, if we choose $w_0$ of the form $$w_0=w'w_{A_{n-2}}s_0,$$ where $w_{A_{n-2}}$ is the standard subword of type $A_{n-2}$, so that the actions of the generators are all very simple except the action of $E_i$ corresponding to the branch point of the Dynkin diagram, then compared with Table 1, we obtain e.g. 1043 terms for $E_6$, 77565 terms for $E_7$, and over 1 million terms for $E_8$ for the action of the generator $E_3$.
\end{Rem}
\section{Modified quantum group}\label{sec:modified}
Finally, recall from \cite{FI} that the generators $\{E_i, F_i, K_i\}$ and $\{\til[E_i],\til[F_i],\til[K_i]\}$ of the two parts of the modular double commute only up to a sign. Following the approach in \cite{FI}, we modify the generators with powers of $K_i$ in order to take care of the commutation relations. The definition is slightly modified to fit subsequent work on non-simply-laced group \cite{Ip4}.

Let us consider a bipartite version of the Dynkin diagram.
\begin{Def}\label{modified} For each node $i\in I$ in the Dynkin diagram, we assign a weight $n_i\in\{0,1\}$ such that $|n_i-n_j|=1$ if $i,j$ are connected in the diagram, so that $n_i$ alternates along the edges.

We define $\fq:=q^2=e^{2\pi \bi b^2}$ and
\Eq{\fq_i:=\case{\fq\inv&\mbox{if $n_i=0$,}\\ \fq&\mbox{if $n_i=1$,}}} and define the modified quantum generators as
\begin{eqnarray*}
\bE_i&:=&q^{n_i} E_iK_i^{n_i},\\
\bF_i&:=&q^{1-n_i}F_iK_i^{n_i-1},\\
\bK_i&:=&\fq_i^{H_i}=\case{K_i^{-2}&\mbox{if $n_i=0$,}\\K_i^{2}&\mbox{if $n_i=1$.}}
\end{eqnarray*}
\end{Def}
\begin{Prop}Let 
\Eq{[A,B]_\fq=AB-\fq\inv BA}
be the quantum commutator. Then the quantum relations in the new variables become
\begin{eqnarray}
\label{bKE}\bK_i\bE_j&=&{\fq_i}^{a_{ij}}\bE_j\bK_i,\\
\label{bKF}\bK_i\bF_j&=&{\fq_i}^{-a_{ij}}\bF_j\bK_i,\\
\bE_i\bF_j&=&\bF_j\bE_i\tab \mbox{ if $i\neq j$},\\
{[\bE_i,\bF_i]}_{\fq_i}&=&\frac{1-\bK_i}{1-\fq_i},
\end{eqnarray}
 and the quantum Serre relations become
\Eq{\label{bSerre}[[\bE_j,\bE_{i}]_{\fq_i},\bE_i]=0=[[\bF_j,\bF_{i}]_{\fq_i},\bF_i].}
\end{Prop}
\begin{Def}
We define the modified quantum group $\bU_\fq(\g)$ to be the Hopf algebra generated by $\bE_i$, $\bF_i$, $\bK_i$ over $\C[\fq^\pm]$ subject to the relations \eqref{bKE}-\eqref{bSerre}.

Let $\fq=e^{2\pi i b^2}$. Then we define the split real $\bU_{\fq}(\g_\R)$ to be the Hopf-$*$ algebra where the generators are self-adjoint under the star structure with $\fq^*=\fq\inv\in\C$.

Similar to the unmodified case, let $\til[\fq]:=\til[q]^2=e^{2\pi \bi b^{-2}}$. We define $\bU_{\til[\fq]}(\g_\R)$ by representing the generators $\til[\bE]_i$, $\til[\bF]_i$, $\til[\bK]_i$ using the formulas above with all the terms replaced by tilde, and we let $\bU_{\fq\til[\fq]}(\g_\R):=\bU_{\fq}(\g_\R)\ox \bU_{\til[\fq]}(\g_\R)$ denote the modular double.
\end{Def}

We can now state the Main Theorem from the introduction for $\bU_{\fq\til[\fq]}(\g_\R)$

\begin{Thm} We define the rescaled generators $\be_i$, $\bf_i$, $\bK_i$ by the same rescaling \eqref{rescale} as before. Then 
the positive representations of $\cU_{q\til[q]}(\g_\R)$ induce a positive representation on $\bU_{\fq\til[\fq]}(\g_\R)$, where we denote the operators under the representation with the same symbol as the generators:
\begin{itemize}
\item[(i)] the operators $\be_i$, $\bf_i$, $\bK_i$ and their tilde counterparts are represented by positive essentially self-adjoint operators;
\item[(ii)] we have the transcendental relations
\begin{eqnarray}
(\be_i)^{\frac{1}{b^2}}&=&\til[\be_i],\\
(\bf_i)^{\;\frac{1}{b^2}}&=&\til[\bf_i],\\
(\bK_i)^{\frac{1}{b^2}}&=&\til[\bK_i];
\end{eqnarray}
\item[(iii)] \mbox{all the generators $\{\bE_i,\bF_i,\bK_i\}$ commute weakly with $\{\til[\bE]_j,\til[\bF]_j,\til[\bK]_j\}$.}
\end{itemize}
Therefore we have constructed the positive principal series representations of the modular double $\bU_{\fq\til[\fq]}(\g_\R)$ where $\g$ is of simply-laced type, parametrized by $rank(\g)$ numbers $\l_i\in\R$.
\end{Thm}

Let $\bC:=\C[\fq^{\pm1}, \L_i^{\pm1}]_{i\in I}$ be the commutative ring of coefficients, and let $\T_{\fq}^{N}:=\bC\<\bu_k^{\pm1},\bv_k^{\pm1}\>_{k=1}^N$ be the Laurent polynomials in the quantum tori variables that satisfy
\Eq{\bu_{k}\bv_{k}=\fq \bv_{k}\bu_{k},}
and commute otherwise.

Then we have Theorem \ref{Thmqtori} generalizing the results of type $A_n$ in \cite{FI}:

\begin{Thm} We have an embedding
\Eq{\bU_{\fq}(\g)\inj \T_{\fq}^{N},}
where $N=l(w_0)=\dim(U_{>0}^+)$. We recover the positive representations $\cP_\l$ by letting $\fq=e^{2\pi i b^2}$, and representing the quantum tori variables by positive self-adjoint operators:
\Eq{\label{fulltorus}\bu_{k}=e^{2\pi bu_{k}},\tab \bv_{k}=e^{2\pi bp_{k}},\tab \L_i=e^{\pi b \l_i}}
for the usual position $u_k$ and momentum operators $p_k=\frac{1}{2\pi \bi}\del[,u_k]$ acting on $L^2(\R^N)$. This can be extended to the positive representations $\cP_\l$ of the modular double by the embedding
$$\bU_\fq(\g)\ox \bU_{\til[\fq]}(\g)\inj \T_{\fq}^N\ox \T_{\til[\fq]}^N$$
acting on the same space $L^2(\R^N)$.
\end{Thm}
\begin{Rem} In \cite{SS}, such a realization is called a \emph{polarization} of the positive representation.
\end{Rem}
\begin{proof} By the explicit expressions for the operators of $\bE_i$ and $\bF_i$ constructed in the previous sections (modified by the $K_i$ factors), we note that all operators are sums of terms that $q^2$-commute with each other. Therefore there exists a unitary transformation such that we can diagonalize the symplectic form corresponding to these $q^2$-commuting terms and obtain a realization in terms of the standard tori. Explicitly, it can be obtained by substituting $e^{\pi b \l_i}$ by $\L_i$, followed by multiplications by the unitary functions
\Eqn{
e^{\pi \bi u_ku_l}&:2p_k\mapsto 2p_k+u_l,\;\;\;p_l\mapsto 2p_l+u_k,& k<l,\\
e^{\frac{1}{2}\pi \bi{(u_k)}^2}&:2p_k\mapsto 2p_k+u_k,
}
whenever $n_{i_k}=0$ and $a_{i_ki_l}=-1$. 

By Remark \ref{highestweight}, the Wick rotation of the parameters $\l$ recovers every irreducible finite-dimensional representation of $\cU_q(\g)$, so that the embedding of every subset of the Poincar\'{e}--Birkhoff--Witt (PBW) basis \cite{Lu1} with a bounded degree will be faithful for some evaluations of $\L_i$. In particular, there will be no nontrivial polynomial relations among the generators of $\bU_{q}(\g)$ in the image, hence the map is an embedding.
\end{proof}

Finally we also relate the commutant of the representations with the Langlands dual, as described in Theorem \ref{ThmLL}. Recall that the Langlands dual is obtained by switching root lattice with coroot lattice. More explicitly, let
$\bb_i=\sum_{j\in I} b_j^i \a_j$ be the weight so that it is dual to the root $\a_i$:
$$(\bb_i,\a_j)=\d_{ij}.$$
This means that
\begin{eqnarray*}
&&\sum_{j\in I} b_j^i(\a_j,\a_k)=\d_{ik}\\
&\iff& \sum_{j\in I} b_{j}^i a_{kj}=\d_{ik}\\
&\iff& A\bb_i=\be_i
\end{eqnarray*}
where $A=(a_{ij})_{i,j\in I}$ is the Cartan matrix and $\be_i$ is the standard unit vector.

Let $D=\det A$ and consider an enlarged embedding
$$\iota:\bU_\fq(\g)\ox \bU_{\til[\fq]}(\g)\inj \T_\fq^N\ox \T_{\til[\fq]}^N\inj \what[\T]_{\fq\til[\fq], D}^N$$
where
$$\what[\T]_{\fq\til[\fq],D}^N:=\bC\<\bu_k^\frac1D,\bv_k^\frac1D, \til[\bu]_k^\frac1D,\til[\bv]_k^\frac1D\>_{k=1}^N$$
with the extra relations 
$$\bu_k^\frac1D\til[\bv]_k^\frac1D=\ze_{D^2} \til[\bv]_k^\frac1D\bu_k^\frac1D, \tab \bv_k^\frac1D\til[\bu]_k^\frac1D=\ze_{D^2}\inv \til[\bu]_k^\frac1D\bv_k^\frac1D$$
and $\ze_n$ is the primitive $n$-th root of unity.

\begin{Thm}\label{Langlands} Under the enlarged embedding, the elements $\what[\bK]_i\in \what[\T]_{\fq\til[\fq],D}^N$ for $i\in I$ commute with the image of $\bU_\fq(\g)$, where
\Eq{\label{KKKK}\what[\bK]_i:=\prod_{j\in I} \iota(\til[\bK]_j)^{b_j^i},}
and the vector $\bb_i=(b_j^i)_{j\in I}$ satisfies
\Eq{A\bb_i=\be_i}
\end{Thm}
In particular, the embedding $\bU_{\til[\fq]}(\g)\inj \T_{\til[\fq]}^N$ induces an embedding of the Langlands dual quantum group (i.e. the \emph{simply connected form}) $\bU_{\til[\fq]}(\g)\subset \bU_{\til[\fq]}({}^L\g)$ into $\what[\T]_{\fq\til[\fq],D}^N$, and the positive representations extended to the generators of $\bU_{\til[\fq]}({}^L\g_\R)$ (i.e. a polarization of $\what[\T]_{\fq\til[\fq],D}^N$) are operators weakly commuting with the generators of $\bU_\fq(\g_\R)$.

\begin{proof} For the $\til[\bK]_i$ generators, it suffices to choose a reduced expression $w_0$ for each $i$ and calculate the commutant with $\bE_i$. Let us choose $w_0=w_{l-1}s_i$. Then 
$$\left[\prod_{j\in I} \iota(\til[\bK]_j)^{b_j},\bE_i\right]=0$$
implies
$$\sum_{j\in I} a_{ij}b_j=(-1)^{n_i+1}k_i$$
for some integer $k_i$.
Combining for every $i$, we obtain the condition
$$A\bb = (k_i)_{i\in I}$$
for integers $k_i\in\Z$, hence the statement is proved.
\end{proof}

\section{Unitary equivalence $\cP_{\l}\simeq \cP_{w(\l)}$}\label{sec:weylaction}
So far we have constructed the positive principal series representations for the parameter $\l\in\R^r$ where $r$ is the rank of $\g$. We know that in the compact case, the finite-dimensional representations are parametrized by the cone of the dominant weights $P^+\subset \fh_\R^*$, where $\fh_\R$ is the real form of the Cartan subalgebra $\fh\subset\g$. Below we will show Theorem \ref{RW} that the positive representation $\cP_\l$ of $\cU_q(\g_\R)$ also depends only on the parameter $\l$ lying on the real span of the dominant cone.

\begin{Thm}Let $\cP_\l$ denote the positive principal series representations of $\cU_q(\g_\R)$ corresponding to the parameter $\l=(\l_i)_{i=1}^r$ where $r=rank(\g)$. Then 
\Eq{\cP_\l\simeq \cP_{w(\l)}}
are unitarily equivalent representations for any Weyl group element $w\in W$ acting on $\l$, namely for simple reflections,
\Eq{s_i(\l_j):=\l_j-a_{ij}\l_i=\case{-\l_i& i=j,\\\l_i+\l_j &\mbox{$i$ and $j$ are connected,}\\\l_j&\mbox{otherwise.}}}
In particular, the positive principal series representations are parametrized by $\l\in\R_{\geq 0}^r$.
\end{Thm}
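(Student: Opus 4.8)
The plan is to reduce the statement to the simple reflections $s_i$ and, for each $i$, to produce an explicit unitary operator $\cI_i$ on $L^2(\R^N)$ implementing $\cP_\l\simeq\cP_{s_i(\l)}$. Since $W=\<s_1,\dots,s_r\>$ and unitary equivalence is transitive, composing such operators gives the claim for an arbitrary $w\in W$, so it is enough to treat one $s_i$. I first locate the parameter dependence of the generators: by Proposition \ref{E} the $E_j$ carry no $\l$, whereas by Propositions \ref{F} and \ref{H} each $\l_k$ occurs only as the additive constant $2\l_k$ inside $F_k$ and inside $H_k$, hence inside $K_k=q^{H_k}$. Consequently $\cI_i$ must fix every $E_j$, flip $\l_i\mapsto-\l_i$ inside $F_i$ and $K_i$, shift $\l_j\mapsto\l_j+\l_i$ inside $F_j$ and $K_j$ for each neighbour $j$ of $i$, and fix all the remaining generators; this is precisely the transformation $\l\mapsto s_i(\l)$ recorded in the statement.

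By Theorem \ref{trans} the representation is independent of the reduced expression up to unitary equivalence, so I would work with $w_0=w_{l-1}s_i$, which puts the root $i$ in the rightmost slot and yields the simple form $E_i=[u_i^1]e(-p_i^1)$ of Proposition \ref{E}. I would build $\cI_i$ as a quantum dilogarithm operator modelled on the rank-one reflection $\cP_{\l_i}\simeq\cP_{-\l_i}$ for $U_{q\til[q]}(\sl(2,\R))$ of \cite{PT2} and \cite{FI}, localized to the $SL(2,\R)$-copy attached to $\a_i$. The action on the $K$'s is the transparent part: the correct flip and neighbour shifts are produced by the affine translation $u_i^1\mapsto u_i^1+2\l_i$, because $H_j=\sum_k -a_{j,r(k)}u_k+2\l_j$ has $u_i^1$-coefficient $-a_{ji}$, equal to $-2$ for $j=i$ and to $+1$ for each neighbour; this adds $-4\l_i$ to $H_i$ and $+2\l_i$ to each neighbouring $H_j$, leaving the non-adjacent $H_k$ fixed. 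Any multiplication operator in the position variables commutes with all $K_j$, so the dilogarithm factors do not disturb this.

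The content lies in $E_i$ and the $F$'s, where the dilogarithm is essential. The translation alone sends $E_i$ to $[u_i^1+2\l_i]e(-p_i^1)$, so I would fix the root-$i$ dilogarithm factor of $\cI_i$ by the requirement $\cI_i E_i\cI_i\inv=E_i$, a functional equation solved through the shift equation $G_b(x+b)/G_b(x)$; with this factor in place, checking $\cI_i F_i\cI_i\inv=F_i|_{\l_i\mapsto-\l_i}$ is exactly the rank-one Ponsot--Teschner computation, carried out with the conjugation identities of Lemma \ref{qsum} and the tau-beta theorem, Lemma \ref{tau}. The delicate point is the neighbouring $F_j$: here a bare translation gives the wrong sign and a non-uniform shift across the summands of the bracket $\sum_{l\geq k}(2u_j^l-\sum_{i',m}u_{i'}^{l,m})-u_j^k+2\l_j$ of Proposition \ref{F}, so $\cI_i$ must carry an additional dilogarithm factor coupling the root-$i$ coordinate to its neighbours that corrects this to the uniform shift $2\l_j\mapsto2\l_j+2\l_i$, while still commuting with every $K_j$ and, crucially, still fixing the neighbouring $E_j$ (which in this word is no longer of the simple form).

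I expect the main obstacle to be producing a single $\cI_i$ that threads all of these constraints simultaneously: fixing every $E_j$ including the complicated neighbouring ones, realizing the rank-one flip in the many-term operator $F_i$, and realizing the uniform neighbour shift in each summand of each $F_j$, with the bracket combinatorics and the $G_b$ functional equations forced to cancel in just the right way. Once the simple reflections are established, arbitrary $w\in W$ follow by composition. Finally, writing $\l=\sum_i\l_i\omega_i$ in the basis of fundamental weights, the stated rule $s_i(\l_j)=\l_j-a_{ij}\l_i$ is the standard $W$-action on $\fh_\R^*$, for which the closed dominant chamber $\{\l:\l_i\geq0\;\forall i\}=\R_{\geq0}^r$ is a fundamental domain; hence every $\cP_\l$ with $\l\in\R^r$ is unitarily equivalent to some $\cP_{\l'}$ with $\l'\in\R_{\geq0}^r$, which gives the asserted parametrization.
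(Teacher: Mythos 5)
Your high-level strategy coincides with the paper's: reduce to a simple reflection $s_i$, and implement $\cP_\l\simeq\cP_{s_i(\l)}$ by a unitary built from a translation and a ratio of quantum dilogarithms modelled on the rank-one intertwiner of \cite{PT2}, then check its effect generator by generator. But your choice of reduced word, $w_0=w_{l-1}s_i$ with $s_i$ \emph{rightmost}, is exactly the wrong one, and it opens a gap your sketch does not close. With that word, by Proposition \ref{F} the variable $u_i^1$ enters $F_i$ only in the $k=1$ summand (it appears in $\sum_{l=k}^{n}2u_i^l$ only when $k=1$), while the summands with $k\geq 2$ still carry $2\l_i$; hence conjugation by any operator built from $u_i^1,p_i^1$ alone (your translation plus the dilogarithm factor fixed by requiring $E_i$-invariance) leaves $-2\l_i$ in those terms untouched, so $\cI_iF_i\cI_i\inv=F_i|_{\l_i\mapsto-\l_i}$ is \emph{not} ``exactly the rank-one Ponsot--Teschner computation'': it fails for every term but the first. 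Worse, for a neighbour $j$ the operator $F_j$ contains \emph{no} occurrence of $u_i^1$ at all --- the rightmost variable lies to the right of every $x_j^l$, outside all the ranges appearing in Proposition \ref{F} --- so no operator in $u_i^1,p_i^1$ whatsoever can produce the required shift $\l_j\mapsto\l_j+\l_i$ there. Your proposed remedy, an ``additional dilogarithm factor coupling the root-$i$ coordinate to its neighbours'', would have to involve the neighbours' position and momentum variables, and then fixing the multi-term neighbouring $E_j$ (which in this word involves both $u_i^1$ and $p_i^1$) and all remaining relations is precisely the unresolved crux, which you yourself flag as the ``main obstacle''. The true intertwiner in your word exists only as the $\Phi$-conjugate of the correct one and is not of the local form your plan allows.

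The paper's proof turns on putting $s_i$ at the \emph{left} end: $w_0=s_iw_{l-1}$, with $u$ the leftmost variable. Then Proposition \ref{F} conspires perfectly: every summand of $F_i$ contains $u$ (with coefficient $-2$, except the leftmost summand, which is exactly the rank-one term $[-2\l_i-u]e(p_u)$ and is the unique place any $F$ contains $p_u$); every summand of every neighbouring $F_j$ contains $+u$; and exactly one $E_k$ contains $u$, in a single term $[u]e(-p_u+\cdots)$. Consequently the one-variable unitary $B=(u\mapsto u-\l_i)\circ G_{\l_i}(u)\circ(u\mapsto u-\l_i)$, with $G_{\l_i}(u)=g_b(e^{2\pi b(u+\l_i)})\,g_b(e^{2\pi b(u-\l_i)})\inv e^{-2\pi i\l_i u}$, does the whole job at once: the net translation of $u$ flips $\l_i$ in all multiplicative occurrences in $F_i$ and $H_i$ and shifts $\l_j\mapsto\l_j+\l_i$ uniformly in every term of $F_j$ and in $H_j$, while the dilogarithm ratio handles the only two terms containing the shift operator $p_u$. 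If you rerun your plan with the leftmost convention, it becomes the paper's proof; with the rightmost convention the decisive steps are unattainable by the operators you permit yourself. Your reduction to simple reflections and the fundamental-domain argument giving the parametrization by $\R_{\geq0}^r$ are fine.
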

\begin{proof} Let us first fix the root index $i$. Since representations corresponding to different reduced expressions of $w_0$ amount to unitary transformation by $\Phi$, we can take $w_0=s_iw_{l-1}$, and call $u:=v_1$ the leftmost variable corresponding to $s_i$. Then by Definition \ref{F} only the action of $F_i$ contains the term with $p_u$, namely, the term 
$$F_i=[-2\l_i-u]e(p_u)+(\mbox{independent of $p_u$})...$$
On the other hand, by the transformation rule \eqref{rule}, we see that the weight of the ``leftmost" term $[u]$ is always preserved. Since we know from the explicit expressions constructed in the previous sections that there is a unique $E_k$ with a single term involving the leftmost variable, this means that under the transformations $\Phi$ between different reduced expressions, there is a unique $E_k$ with the term of the form
$$E_k=[u]e(-p_u+...)+(\mbox{independent of $u, p_u$}).....$$
and no other terms from the action of the $E$ contain the variable $u$.
Now we can define our intertwiner $B$ as
\Eq{B=(u\mapsto u-\l_i)\circ G_{\l_i}(u) \circ (u\mapsto u-\l_i),}
where 
\Eq{G_{\l_i}(u)=\frac{g_b(e^{2\pi b(u+\l_i)})}{g_b(e^{2\pi b(u-\l_i)})}e^{-2\pi \bi\l_i u}}
is a unitary function which is essentially the same in the $\cU_{q\til[q]}(\sl(2,\R))$ case considered in \cite{PT2}. Note that if the action does not involve $p_u$, then it is just shifting by $u\mapsto u-2\l_i$. One can check that this map preserves the $E_k$ action, change the terms in $F_i$ as
$$[-2\l_i-u]e(p_u)\mapsto [2\l_i-u]e(p_u),$$
$$[-2\l_i-2u+...-w]e(p_w)\mapsto [2\l_i-2u+....-w]e(p_w),$$
 and for the action of $F_j$ with $j$ adjacent to $i$, all the terms change as
$$[-2\l_j+u+...-v]e(p_v)\mapsto [-2(\l_j+\l_i)+u+...-v]e(p_v).$$
\end{proof}

\section{Remarks on Conjecture \ref{conL2}}\label{sec:conjectures}
Finally we would like to discuss possible approaches to Conjecture \ref{conL2}. First let us generalize the result in \cite[Theorem 3.3]{FI}, where there exists a unitary transformation on $L^2(\R^N)$ so that the actions of $H_i$ do not depend on $\l_i$. In particular, this provides us with a way to find the Haar functional in order to define an $L^2$ space structure for the harmonic analysis of $L^2(G_q^+)$.

\begin{Thm}\label{transLi} There exists a linear transformation on the coordinates $u_j$ so that the action of $H_i$ does not depend on $\l_i$, while all the weights of $E_i$ and $F_i$ are of the form
$$\left[\frac{Q}{2b}+\frac{\bi }{b}\left(\sum c_ju_j+\l'_i\right)\right]_q,$$
for $r$ distinguished parameters $\l'_i$, where $r$ is the rank of $\g_\R$.
\end{Thm}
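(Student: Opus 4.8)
I need to produce a linear change of the Mellin variables $u_i^k$ so that (a) each $H_i$ loses its dependence on $\l_i$, and (b) every weight appearing in $E_i$ and $F_i$ takes the canonical form $\left[\frac{Q}{2b}+\frac{i}{b}\left(\sum c_i^k u_i^k+\l'_i\right)\right]_q$ with exactly $r=\mathrm{rank}(\g)$ residual parameters $\l'_i$. The plan is to exploit the structural fact, already visible in the explicit actions of Propositions \ref{E}, \ref{F} and \ref{H}, that $\l_i$ enters the formulas only as an additive constant $2\l_i$ attached to the root index $i$, never multiplicatively and never mixed with the coordinate variables. Thus I expect the required transformation to be nothing more than an affine shift $u_i^k\mapsto u_i^k+c\,\l_{r(k)}$ on each coordinate, chosen to absorb the $\l$-dependence of the $H_i$'s into a relabeling.

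**Main steps.** First I would write $H_i=\sum_k -a_{i,r(k)}u_k+2\l_i$ from \eqref{HH} and ask for a shift $u_k\mapsto u_k+\mu_k$ (with $\mu_k$ linear in the $\l$'s) that kills the constant term. Because the linear part of $H_i$ pairs the Cartan matrix against the coordinate weights, the condition that $H_i$ become $\l$-independent reads $\sum_k a_{i,r(k)}\mu_k=2\l_i$ for every $i$; grouping coordinates by their root index this is exactly the linear system $A\mu^{\mathrm{grp}}=2\l$, which is solvable since the Cartan matrix $A$ is invertible (this is the same invertibility used in Theorem \ref{Langlands}). Second, I would substitute this shift into the weight expressions for $E_i$ (from \eqref{EE}) and $F_i$ (from \eqref{FF}). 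Since each weight is linear in $\bu$, the shift only alters the additive constant inside each bracket, turning $\l_i$ into some new combination $\l'_i$; crucially, because the shift is designed to cancel the $H_i$-constant, the bookkeeping forces the surviving constants in the $E$ and $F$ weights to collapse to $r$ independent parameters, one per simple root. Third, I would verify that the transformation is genuinely linear (not just affine) on the relevant vector space by absorbing $\l$ into the coordinates, so that it is implementable as a unitary on $L^2(\R^N)$ exactly as the change-of-words transformations $\Phi$ were.

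**Where the work concentrates.** The bookkeeping step — showing that after the shift the residual constants in all the $E_i$ and $F_i$ weights reduce to precisely $r$ free parameters $\l'_i$ and not more — is the genuine content, and I expect it to be the main obstacle. The shift is tailored to the $H_i$'s, so I must check it is simultaneously compatible with the more intricate sum-over-$k$ structure of $F_i$ in \eqref{FF}, where $\l_i$ appears inside each summand through $2\l_i$. The favorable point is that in every formula $\l_i$ is tied rigidly to root index $i$ and appears with the same coefficient as the diagonal Cartan entry, so the single system $A\mu=2\l$ should simultaneously normalize all of them; I would make this precise by tracking, for a fixed root $i$, how the shift $\mu$ feeds into the coefficient of $\l$ inside the $E_i$ and $F_i$ brackets and confirming it reproduces a single renamed constant $\l'_i$. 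Once this compatibility is established the theorem follows, and as noted in the remark after Corollary \ref{classreplimit} this normalized form is exactly what makes the classical limit and the Haar-functional analysis transparent.
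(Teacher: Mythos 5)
Your reduction of condition (a) to a grouped linear system is correct as far as it goes, but it cannot prove the theorem, and the step you yourself flag as ``the main obstacle'' is exactly where the argument breaks down. Your system $\sum_k a_{i,r(k)}\mu_k=2\l_i$ consists of only $r$ equations in $N=l(w_0)$ unknowns: it pins down the \emph{total} shift $M_j=\sum_{k:\,r(k)=j}\mu_k$ attached to each root index, but says nothing about how that total is distributed among the several coordinates carrying the same root index, and this leftover $(N-r)$-dimensional freedom is the whole content of condition (b). Concretely, take $A_2$ with $w_0=s_2s_1s_2$ and rename the coordinates $u_1,u_2,u_3$ from the left; the three $F$-weights of \eqref{FF} are $u_1+2\l_2$, $u_2-u_1+2\l_1$, $u_3-u_2+2u_1+2\l_2$, and your system forces only $\mu_2=\frac{4\l_1+2\l_2}{3}$ and $\mu_1+\mu_3=\frac{2\l_1+4\l_2}{3}$. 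The choice $\mu_3=0$ is a perfectly valid solution (one checks $H_1,H_2$ do become $\l$-free), yet after the shift the three weight constants are $\frac{2(\l_2-\l_1)}{3}$, $\frac{4\l_1+2\l_2}{3}$ and $0$ --- generically three distinct parameters, not $r=2$, and not of the required form. So your claim that the single system $A\mu=2\l$ ``forces the surviving constants to collapse to $r$ independent parameters'' is false: the collapse happens only for very special solutions, and your proposal contains no mechanism for selecting them.

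The mechanism the paper uses is a structural fact you never invoke. Order the coordinates $u_1,\dots,u_N$ from the left along $w_0$; then each coordinate $u_k$ owns a \emph{unique} $F$-weight which involves only $u_1,\dots,u_k$ and in which $u_k$ appears with coefficient $1$ (this unitriangularity is visible in \eqref{FF}: the $k$-th term of $F_i$ involves exactly the variables lying to the left of the corresponding letter $s_i$ in $w_0$). This yields $N$ normalization conditions rather than $r$: the shifts are determined \emph{recursively}, the $k$-th step shifting only $u_k$ by $\frac{\b_k}{\b_k+1}\l'_{r(k)}$, where $\l'_{r(k)}$ is the constant of the $k$-th weight \emph{after} the earlier shifts and $\b_k+1=\b(\l'_{r(k)})$ is the sum of its coefficients in the $\l_j$'s; unitriangularity guarantees each step leaves the already-normalized weights untouched, and every weight ends with a constant of coefficient-sum $1$. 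One then checks that these recursively determined individual shifts do solve your grouped system (so $H_i$ loses its $\l$-dependence) and that the resulting constants, in the $E$-weights as well, take only $r$ distinct values: in the $A_2$ example the correct distribution is $\mu_1=\l_2$, $\mu_3=\frac{2\l_1+\l_2}{3}$, giving constants $\l_2,\ \frac{2\l_1+\l_2}{3},\ \frac{2\l_1+\l_2}{3}$. In short, the decisive input is the triangular ordering of the $F$-weights along the reduced word and a recursive choice of the individual shifts --- not invertibility of the Cartan matrix, which alone is genuinely insufficient.
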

\begin{proof}
Let $\bii=(i_1,...,i_N)$ be the reduced word. We see that for each $k=1,...,N$, there exists a unique weight from some $F_i$ of the form
$$\left[\frac{Q}{2b}+\frac{\bi}{b}\left(\sum_{j=1}^k c_j u_j+2\l_{i_k}\right)\right]_q,$$
where $c_j\in\Z$ are some constants with $c_k=1$. Given a linear combination $\l=\sum_j d_j\l_j$ of the parameters $\l_i$, we define $\b(\l)=\sum_j d_j$. Then the transformation is given by the following.
\begin{itemize}
\item In the first step, we let $u_1\mapsto u_1-\l_{i_1}$.
\item In the $k$-th step, let the $k$-th weight be of the form 
$$\left[\frac{Q}{2b}+\frac{\bi}{b}\left(\sum_{j=1}^k c_j u_j+\l'_{i_k}\right)\right]_q$$
for some modified $\l'_{i_k}$. Denote $\b_k=\b(\l'_{i_k})-1$. Then we let $$u_k\mapsto u_k-\frac{\b_k}{\b_k+1}\l'_{i_k}.$$
\end{itemize}
Notice that $\b_k$ is always a positive integer. 
\end{proof}

This result allows us to state Conjecture \ref{conL2} in a more precise setting, namely to define an $L^2$ structure on (the modular double of) the quantized function space $F_{q\til[q]}(G^+)$. If we ignore the factor $\frac{\bi}{b}$ and treat the original the parameters $\l_i$ having real part $-\frac{Q}{2}$, then the shifting in the theorem above tells us that $-\frac{Q}{2}\b_k$ was the correct real part of $u_k$ to make the representation positive, which reflects its original classical Haar measure, namely the Haar measure on the original variables $a_k$ given by $a_k^{\b_k-1}da_k$. Then one can apply the method in \cite{Ip2} to define the space $L^2(G_q^+)$ in general using the Gauss--Lusztig decomposition. By studying the Plancherel measure arising from the decomposition of $L^2(G_q^+)$, it may shed some light on Conjecture \ref{contensor}, where in the case of $\cU_{q\til[q]}(\sl(2,\R))$ it is found in \cite{PT2} that the same measure shows up in the decomposition of the tensor product.

\begin{appendices}
\section {Positive representations for $E_6, E_7$ and $E_8$}
In this appendix we will write down explicitly the action of the operators $E_i$ in type $E_6$, $E_7$ and $E_8$. The actions of the operators $F_i$ and $K_i$ are given explicitly by Definitions \ref{F} and \ref{H} respectively. In \cite{Ip8} these representations are represented in a very clear graphical way on certain quivers using the language of quantum cluster algebra.

To simplify notation, let us use the following convention:
Let $P=\sum \pm p_{i}^k$ be a sum of the variables $p_i^k$, which denotes the shift in the variables $u_i^k$. Then we denote by $P(...p_m^n)$ the partial sum of $P$ starting from the first term and ending at $p_m^n$. For example, if 
$$P:=-p_{0}^{1}+p_{3}^{1}-p_{3}^{2}+p_{4}^{1}-p_{4}^{2}+p_{5}^{1}-p_{5}^{2}+p_{4}^{3}-p_{4}^{4}+p_{3}^{5}-p_{3}^{6}+p_{0}^{3}-p_{0}^{4}+p_{3}^{7}-p_{3}^{8}+p_{4}^{5}-p_{4}^{6},$$
then 
$$P(...p_4^4):=-p_{0}^{1}+p_{3}^{1}-p_{3}^{2}+p_{4}^{1}-p_{4}^{2}+p_{5}^{1}-p_{5}^{2}+p_{4}^{3}-p_{4}^{4}.$$
Also recall from Definition \ref{up} that
$$[\bu]e(-\bp):=\left[\frac{Q}{2b}-\frac{\bi}{b}(\bu)\right]_q e^{-2\pi b\bp}$$
and it is positive essentially self-adjoint when $[\bp,\bu]=\frac{1}{2\pi \bi}$.


\subsection{Positive representations in type $E_6$}
\allowdisplaybreaks
The actions of the generators $E_i$ in type $E_6$ corresponding to $$\bii=(4\;34\;034\;23043212340321\;5432103243054321)$$ are given by the following. Let
{
\tiny
\begin{eqnarray*}
P_6^0&=&-p_{0}^{1}+p_{3}^{1}-p_{3}^{2}+p_{4}^{1}-p_{4}^{2}+p_{5}^{1}-p_{5}^{2}+p_{4}^{3}-p_{4}^{4}+p_{3}^{5}-p_{3}^{6}+p_{0}^{3}-p_{0}^{4}+p_{3}^{7}-p_{3}^{8}+p_{4}^{5}-p_{4}^{6},\\
P_6^2&=&-p_{2}^{1}+p_{1}^{1}-p_{1}^{2}+p_{2}^{2}-p_{2}^{3}+p_{3}^{3}-p_{3}^{4}+p_{0}^{2}-p_{0}^{3}+p_{3}^{5}-p_{3}^{6}+p_{4}^{4}-p_{4}^{5}+p_{3}^{7}-p_{3}^{8}+p_{0}^{4}-p_{0}^{5}+p_{3}^{9}-p_{3}^{10}+p_{4}^{7}-p_{4}^{8},\\
P_6^3&=&-p_{3}^{1}+p_{2}^{1}-p_{2}^{2}+p_{3}^{2}-p_{3}^{3}+p_{4}^{2}-p_{4}^{3}+p_{3}^{4}-p_{3}^{5}+p_{2}^{4}-p_{2}^{5}+p_{3}^{6}-p_{3}^{7}+p_{2}^{6}-p_{2}^{7}+p_{3}^{8}-p_{3}^{9}+p_{4}^{6}-p_{4}^{7},\\
P_6^4&=&-p_{4}^{1}+p_{3}^{1}-p_{3}^{2}+p_{0}^{1}-p_{0}^{2}+p_{3}^{3}-p_{3}^{4}+p_{2}^{3}-p_{2}^{4}+p_{1}^{3}-p_{1}^{4}+p_{2}^{5}-p_{2}^{6},\\
P_6^5&=&-p_{5}^{1}+p_{4}^{1}-p_{4}^{2}+p_{3}^{2}-p_{3}^{3}+p_{2}^{2}-p_{2}^{3}+p_{1}^{2}-p_{1}^{3}.
\end{eqnarray*}
}
Then we have
{\tiny
\begin{eqnarray*}
E_0&=&[u_{0}^{1}-u_{3}^{1}]e(-p_{0}^{1})+[u_{3}^{2}-u_{4}^{1}]e(P_6^0(...p_{3}^{2}))+[u_{4}^{2}-u_{5}^{1}]e(P_6^0(...p_{4}^{2}))+[u_{5}^{2}-u_{4}^{3}]e(P_6^0(...p_{5}^{2}))\\
&&+[u_{4}^{4}-u_{3}^{5}]e(P_6^0(...p_{4}^{4}))+[u_{3}^{6}-u_{0}^{3}]e(P_6^0(...p_{3}^{6}))+[u_{0}^{4}-u_{3}^{7}]e(P_6^0(...p_{0}^{4}))+[u_{3}^{8}-u_{4}^{5}]e(P_6^0(...p_{3}^{8}))+[u_{4}^{6}]e(P_6^0),\\
E_1&=&[u_{1}^{1}]e(-p_{1}^{1}),\\
E_2&=&[u_{2}^{1}-u_{1}^{1}]e(-p_{2}^{1})+[u_{1}^{2}-u_{2}^{2}]e(P_6^2(...p_{1}^{2}))+[u_{2}^{3}-u_{3}^{3}]e(P_6^2(...p_{2}^{3}))+[u_{3}^{4}-u_{0}^{2}]e(P_6^2(...p_{3}^{4}))\\
&&+[u_{0}^{3}-u_{3}^{5}]e(P_6^2(...p_{0}^{3}))+[u_{3}^{6}-u_{4}^{4}]e(P_6^2(...p_{3}^{6}))+[u_{4}^{5}-u_{3}^{7}]e(P_6^2(...p_{4}^{5}))+[u_{3}^{8}-u_{0}^{4}]e(P_6^2(...p_{3}^{8}))\\
&&+[u_{0}^{5}-u_{3}^{9}]e(P_6^2(...p_{0}^{5}))+[u_{3}^{10}-u_{4}^{7}]e(P_6^2(...p_{3}^{10}))+[u_{4}^{8}]e(P_6^2),\\
E_3&=&[u_{3}^{1}-u_{2}^{1}]e(-p_{3}^{1})+[u_{2}^{2}-u_{3}^{2}]e(P_6^3(...p_{2}^{2}))+[u_{3}^{3}-u_{4}^{2}]e(P_6^3(...p_{3}^{3}))+[u_{4}^{3}-u_{3}^{4}]e(P_6^3(...p_{4}^{3}))\\
&&+[u_{3}^{5}-u_{2}^{4}]e(P_6^3(...p_{3}^{5}))+[u_{2}^{5}-u_{3}^{6}]e(P_6^3(...p_{2}^{5}))+[u_{3}^{7}-u_{2}^{6}]e(P_6^3(...p_{3}^{7}))+[u_{2}^{7}-u_{3}^{8}]e(P_6^3(...p_{2}^{7}))\\
&&+[u_{3}^{9}-u_{4}^{6}]e(P_6^3(...p_{3}^{9}))+[u_{4}^{7}]e(P_6^3),\\
E_4&=&[u_{4}^{1}-u_{3}^{1}]e(-p_{4}^{1})+[u_{3}^{2}-u_{0}^{1}]e(P_6^4(...p_{3}^{2}))+[u_{0}^{2}-u_{3}^{3}]e(P_6^4(...p_{0}^{2}))+[u_{3}^{4}-u_{2}^{3}]e(P_6^4(...p_{3}^{4}))\\
&&+[u_{2}^{4}-u_{1}^{3}]e(P_6^4(...p_{2}^{4}))+[u_{1}^{4}-u_{2}^{5}]e(P_6^4(...p_{1}^{4}))+[u_{2}^{6}]e(P_6^4),\\
E_5&=&[u_{5}^{1}-u_{4}^{1}]e(-p_{5}^{1})+[u_{4}^{2}-u_{3}^{2}]e(P_6^5(...p_{4}^{2}))+[u_{3}^{3}-u_{2}^{2}]e(P_6^5(...p_{3}^{3}))+[u_{2}^{3}-u_{1}^{2}]e(P_6^5(...p_{2}^{3}))+[u_{1}^{3}]e(P_6^5).
\end{eqnarray*}
}


\subsection{Positive representations in type $E_7$}
The actions of the generators $E_i$ in type $E_7$ corresponding to $$\bii=(4\;34\;034\;230432\;12340321\;5432103243054321\;65432034561234503423012345)$$ are given by the following. Let
{\tiny
\begin{eqnarray*}
P_7^0&=&-p_{0}^{1}+p_{3}^{1}-p_{3}^{2}+p_{2}^{1}-p_{2}^{2}+p_{1}^{1}-p_{1}^{2}+p_{2}^{3}-p_{2}^{4}+p_{3}^{5}-p_{3}^{6}+p_{0}^{3}-p_{0}^{4}+p_{3}^{7}-p_{3}^{8}+p_{4}^{6}-p_{4}^{7}+p_{5}^{5}-p_{5}^{6}+p_{4}^{8}-p_{4}^{9}\\
&&+p_{3}^{11}-p_{3}^{12}+p_{0}^{6}-p_{0}^{7}+p_{3}^{13}-p_{3}^{14}+p_{4}^{10}-p_{4}^{11},\\
P_7^1&=&-p_{1}^{1}+p_{2}^{1}-p_{2}^{2}+p_{3}^{2}-p_{3}^{3}+p_{4}^{2}-p_{4}^{3}+p_{5}^{2}-p_{5}^{3}+p_{6}^{2}-p_{6}^{3}+p_{5}^{4}-p_{5}^{5}+p_{4}^{6}-p_{4}^{7}+p_{3}^{8}-p_{3}^{9}+p_{2}^{6}-p_{2}^{7}+p_{1}^{4}-p_{1}^{5},\\
P_7^2&=&-p_{2}^{1}+p_{3}^{1}-p_{3}^{2}+p_{0}^{1}-p_{0}^{2}+p_{3}^{3}-p_{3}^{4}+p_{4}^{3}-p_{4}^{4}+p_{5}^{3}-p_{5}^{4}+p_{4}^{5}-p_{4}^{6}+p_{3}^{7}-p_{3}^{8}+p_{0}^{4}-p_{0}^{5}+p_{3}^{9}-p_{3}^{10}+p_{2}^{7}\\
&&-p_{2}^{8}+p_{1}^{5}-p_{1}^{6}+p_{2}^{9}-p_{2}^{10},\\
P_7^3&=&-p_{3}^{1}+p_{4}^{1}-p_{4}^{2}+p_{3}^{2}-p_{3}^{3}+p_{2}^{2}-p_{2}^{3}+p_{3}^{4}-p_{3}^{5}+p_{4}^{4}-p_{4}^{5}+p_{3}^{6}-p_{3}^{7}+p_{2}^{5}-p_{2}^{6}+p_{3}^{8}-p_{3}^{9}+p_{4}^{7}-p_{4}^{8}+p_{3}^{10}-p_{3}^{11}\\
&&+p_{2}^{8}-p_{2}^{9}+p_{3}^{12}-p_{3}^{13}+p_{2}^{10}-p_{2}^{11}+p_{3}^{14}-p_{3}^{15}+p_{4}^{11}-p_{4}^{12},\\
P_7^4&=&-p_{4}^{1}+p_{5}^{1}-p_{5}^{2}+p_{4}^{2}-p_{4}^{3}+p_{3}^{3}-p_{3}^{4}+p_{0}^{2}-p_{0}^{3}+p_{3}^{5}-p_{3}^{6}+p_{2}^{4}-p_{2}^{5}+p_{1}^{3}-p_{1}^{4}+p_{2}^{6}-p_{2}^{7}+p_{3}^{9}-p_{3}^{10}+p_{0}^{5}\\
&&-p_{0}^{6}+p_{3}^{11}-p_{3}^{12}+p_{4}^{9}-p_{4}^{10}+p_{3}^{13}-p_{3}^{14}+p_{0}^{7}-p_{0}^{8}+p_{3}^{15}-p_{3}^{16}+p_{4}^{12}-p_{4}^{13},\\
P_7^5&=&-p_{5}^{1}+p_{6}^{1}-p_{6}^{2}+p_{5}^{2}-p_{5}^{3}+p_{4}^{3}-p_{4}^{4}+p_{3}^{4}-p_{3}^{5}+p_{2}^{3}-p_{2}^{4}+p_{1}^{2}-p_{1}^{3}.\\
\end{eqnarray*}
}
Then we have
{\tiny
\begin{eqnarray*}
E_0&=&[u_{0}^{1}-u_{3}^{1}]e(-p_{0}^{1})+[u_{3}^{2}-u_{2}^{1}]e(P_7^0(...p_{3}^{2}))+[u_{2}^{2}-u_{1}^{1}]e(P_7^0(...p_{2}^{2}))+[u_{1}^{2}-u_{2}^{3}]e(P_7^0(...p_{1}^{2}))\\
&&+[u_{2}^{4}-u_{3}^{5}]e(P_7^0(...p_{2}^{4}))+[u_{3}^{6}-u_{0}^{3}]e(P_7^0(...p_{3}^{6}))+[u_{0}^{4}-u_{3}^{7}]e(P_7^0(...p_{0}^{4}))+[u_{3}^{8}-u_{4}^{6}]e(P_7^0(...p_{3}^{8}))\\
&&+[u_{4}^{7}-u_{5}^{5}]e(P_7^0(...p_{4}^{7}))+[u_{5}^{6}-u_{4}^{8}]e(P_7^0(...p_{5}^{6}))+[u_{4}^{9}-u_{3}^{11}]e(P_7^0(...p_{4}^{9}))+[u_{3}^{12}-u_{0}^{6}]e(P_7^0(...p_{3}^{12}))\\
&&+[u_{0}^{7}-u_{3}^{13}]e(P_7^0(...p_{0}^{7}))+[u_{3}^{14}-u_{4}^{10}]e(P_7^0(...p_{3}^{14}))+[u_{4}^{11}]e(P_7^0),\\
E_1&=&[u_{1}^{1}-u_{2}^{1}]e(-p_{1}^{1})+[u_{2}^{2}-u_{3}^{2}]e(P_7^1(...p_{2}^{2}))+[u_{3}^{3}-u_{4}^{2}]e(P_7^1(...p_{3}^{3}))+[u_{4}^{3}-u_{5}^{2}]e(P_7^1(...p_{4}^{3}))\\
&&+[u_{5}^{3}-u_{6}^{2}]e(P_7^1(...p_{5}^{3}))+[u_{6}^{3}-u_{5}^{4}]e(P_7^1(...p_{6}^{3}))+[u_{5}^{5}-u_{4}^{6}]e(P_7^1(...p_{5}^{5}))+[u_{4}^{7}-u_{3}^{8}]e(P_7^1(...p_{4}^{7}))\\
&&+[u_{3}^{9}-u_{2}^{6}]e(P_7^1(...p_{3}^{9}))+[u_{2}^{7}-u_{1}^{4}]e(P_7^1(...p_{2}^{7}))+[u_{1}^{5}]e(P_7^1),\\
E_2&=&[u_{2}^{1}-u_{3}^{1}]e(-p_{2}^{1})+[u_{3}^{2}-u_{0}^{1}]e(P_7^2(...p_{3}^{2}))+[u_{0}^{2}-u_{3}^{3}]e(P_7^2(...p_{0}^{2}))+[u_{3}^{4}-u_{4}^{3}]e(P_7^2(...p_{3}^{4}))\\
&&+[u_{4}^{4}-u_{5}^{3}]e(P_7^2(...p_{4}^{4}))+[u_{5}^{4}-u_{4}^{5}]e(P_7^2(...p_{5}^{4}))+[u_{4}^{6}-u_{3}^{7}]e(P_7^2(...p_{4}^{6}))+[u_{3}^{8}-u_{0}^{4}]e(P_7^2(...p_{3}^{8}))\\
&&+[u_{0}^{5}-u_{3}^{9}]e(P_7^2(...p_{0}^{5}))+[u_{3}^{10}-u_{2}^{7}]e(P_7^2(...p_{3}^{10}))+[u_{2}^{8}-u_{1}^{5}]e(P_7^2(...p_{2}^{8}))+[u_{1}^{6}-u_{2}^{9}]e(P_7^2(...p_{1}^{6}))+[u_{2}^{10}]e(P_7^2),\\
E_3&=&[u_{3}^{1}-u_{4}^{1}]e(-p_{3}^{1})+[u_{4}^{2}-u_{3}^{2}]e(P_7^3(...p_{4}^{2}))+[u_{3}^{3}-u_{2}^{2}]e(P_7^3(...p_{3}^{3}))+[u_{2}^{3}-u_{3}^{4}]e(P_7^3(...p_{2}^{3}))+[u_{3}^{5}-u_{4}^{4}]e(P_7^3(...p_{3}^{5}))\\
&&+[u_{4}^{5}-u_{3}^{6}]e(P_7^3(...p_{4}^{5}))+[u_{3}^{7}-u_{2}^{5}]e(P_7^3(...p_{3}^{7}))+[u_{2}^{6}-u_{3}^{8}]e(P_7^3(...p_{2}^{6}))+[u_{3}^{9}-u_{4}^{7}]e(P_7^3(...p_{3}^{9}))\\
&&+[u_{4}^{8}-u_{3}^{10}]e(P_7^3(...p_{4}^{8}))+[u_{3}^{11}-u_{2}^{8}]e(P_7^3(...p_{3}^{11}))+[u_{2}^{9}-u_{3}^{12}]e(P_7^3(...p_{2}^{9}))+[u_{3}^{13}-u_{2}^{10}]e(P_7^3(...p_{3}^{13}))\\
&&+[u_{2}^{11}-u_{3}^{14}]e(P_7^3(...p_{2}^{11}))+[u_{3}^{15}-u_{4}^{11}]e(P_7^3(...p_{3}^{15}))+[u_{4}^{12}]e(P_7^3),\\
E_4&=&[u_{4}^{1}-u_{5}^{1}]e(-p_{4}^{1})+[u_{5}^{2}-u_{4}^{2}]e(P_7^4(...p_{5}^{2}))+[u_{4}^{3}-u_{3}^{3}]e(P_7^4(...p_{4}^{3}))+[u_{3}^{4}-u_{0}^{2}]e(P_7^4(...p_{3}^{4}))\\
&&+[u_{0}^{3}-u_{3}^{5}]e(P_7^4(...p_{0}^{3}))+[u_{3}^{6}-u_{2}^{4}]e(P_7^4(...p_{3}^{6}))+[u_{2}^{5}-u_{1}^{3}]e(P_7^4(...p_{2}^{5}))+[u_{1}^{4}-u_{2}^{6}]e(P_7^4(...p_{1}^{4}))\\
&&+[u_{2}^{7}-u_{3}^{9}]e(P_7^4(...p_{2}^{7}))+[u_{3}^{10}-u_{0}^{5}]e(P_7^4(...p_{3}^{10}))+[u_{0}^{6}-u_{3}^{11}]e(P_7^4(...p_{0}^{6}))+[u_{3}^{12}-u_{4}^{9}]e(P_7^4(...p_{3}^{12}))\\
&&+[u_{4}^{10}-u_{3}^{13}]e(P_7^4(...p_{4}^{10}))+[u_{3}^{14}-u_{0}^{7}]e(P_7^4(...p_{3}^{14}))+[u_{0}^{8}-u_{3}^{15}]e(P_7^4(...p_{0}^{8}))+[u_{3}^{16}-u_{4}^{12}]e(P_7^4(...p_{3}^{16}))\\
&&+[u_{4}^{13}]e(P_7^4),\\
E_5&=&[u_{5}^{1}-u_{6}^{1}]e(-p_{5}^{1})+[u_{6}^{2}-u_{5}^{2}]e(P_7^5(...p_{6}^{2}))+[u_{5}^{3}-u_{4}^{3}]e(P_7^5(...p_{5}^{3}))+[u_{4}^{4}-u_{3}^{4}]e(P_7^5(...p_{4}^{4}))+[u_{3}^{5}-u_{2}^{3}]e(P_7^5(...p_{3}^{5}))\\
&&+[u_{2}^{4}-u_{1}^{2}]e(P_7^5(...p_{2}^{4}))+[u_{1}^{3}]e(P_7^5),\\
E_6&=&[u_{6}^{1}]e(-p_{6}^{1}).\\
\end{eqnarray*}
}

\subsection{Positive representations in type $E_8$}
The actions of the generators $E_i$ in type $E_8$ corresponding to
\Eqn{
\bii&=(4\;34\;034\;230432\;12340321\;5432103243054321\;654320345612345034230123456\\
&765432103243546503423012345676543203456123450342301234567)
}
are given by the following. Let
{\tiny
\begin{eqnarray*}
P_8^0&=&-p_{0}^{1}+p_{3}^{1}-p_{3}^{2}+p_{2}^{1}-p_{2}^{2}+p_{1}^{1}-p_{1}^{2}+p_{2}^{3}-p_{2}^{4}+p_{3}^{5}-p_{3}^{6}+p_{0}^{3}-p_{0}^{4}+p_{3}^{7}-p_{3}^{8}+p_{2}^{5}-p_{2}^{6}+p_{1}^{3}-p_{1}^{4}+p_{2}^{7}-p_{2}^{8}\\
&&+p_{3}^{11}-p_{3}^{12}+p_{0}^{6}-p_{0}^{7}+p_{3}^{13}-p_{3}^{14}+p_{2}^{9}-p_{2}^{10}+p_{1}^{5}-p_{1}^{6}+p_{2}^{11}-p_{2}^{12}+p_{3}^{17}-p_{3}^{18}+p_{0}^{9}-p_{0}^{10}+p_{3}^{19}-p_{3}^{20}\\
&&+p_{4}^{16}-p_{4}^{17}+p_{5}^{13}-p_{5}^{14}+p_{4}^{18}-p_{4}^{19}+p_{3}^{23}-p_{3}^{24}+p_{0}^{12}-p_{0}^{13}+p_{3}^{25}-p_{3}^{26}+p_{4}^{20}-p_{4}^{21},\\
P_8^1&=&-p_{1}^{1}+p_{2}^{1}-p_{2}^{2}+p_{3}^{2}-p_{3}^{3}+p_{4}^{2}-p_{4}^{3}+p_{5}^{2}-p_{5}^{3}+p_{6}^{2}-p_{6}^{3}+p_{5}^{4}-p_{5}^{5}+p_{6}^{4}-p_{6}^{5}+p_{5}^{6}-p_{5}^{7}+p_{4}^{8}-p_{4}^{9}+p_{3}^{10}-p_{3}^{11}\\
&&+p_{2}^{7}-p_{2}^{8}+p_{1}^{4}-p_{1}^{5}+p_{2}^{9}-p_{2}^{10}+p_{3}^{14}-p_{3}^{15}+p_{4}^{12}-p_{4}^{13}+p_{5}^{10}-p_{5}^{11}+p_{6}^{8}-p_{6}^{9}+p_{5}^{12}-p_{5}^{13}+p_{4}^{16}-p_{4}^{17}+p_{3}^{20}\\
&&-p_{3}^{21}+p_{2}^{14}-p_{2}^{15}+p_{1}^{8}-p_{1}^{9},\\
P_8^2&=&-p_{2}^{1}+p_{3}^{1}-p_{3}^{2}+p_{0}^{1}-p_{0}^{2}+p_{3}^{3}-p_{3}^{4}+p_{4}^{3}-p_{4}^{4}+p_{5}^{3}-p_{5}^{4}+p_{4}^{5}-p_{4}^{6}+p_{5}^{5}-p_{5}^{6}+p_{4}^{7}-p_{4}^{8}+p_{3}^{9}-p_{3}^{10}+p_{0}^{5}-p_{0}^{6}\\
&&+p_{3}^{11}-p_{3}^{12}+p_{2}^{8}-p_{2}^{9}+p_{3}^{13}-p_{3}^{14}+p_{0}^{7}-p_{0}^{8}+p_{3}^{15}-p_{3}^{16}+p_{4}^{13}-p_{4}^{14}+p_{5}^{11}-p_{5}^{12}+p_{4}^{15}-p_{4}^{16}+p_{3}^{19}-p_{3}^{20}\\
&&+p_{0}^{10}-p_{0}^{11}+p_{3}^{21}-p_{3}^{22}+p_{2}^{15}-p_{2}^{16}+p_{1}^{9}-p_{1}^{10}+p_{2}^{17}-p_{2}^{18},\\
P_8^3&=&-p_{3}^{1}+p_{4}^{1}-p_{4}^{2}+p_{3}^{2}-p_{3}^{3}+p_{2}^{2}-p_{2}^{3}+p_{3}^{4}-p_{3}^{5}+p_{4}^{4}-p_{4}^{5}+p_{3}^{6}-p_{3}^{7}+p_{4}^{6}-p_{4}^{7}+p_{3}^{8}-p_{3}^{9}+p_{2}^{6}-p_{2}^{7}+p_{3}^{10}-p_{3}^{11}\\
&&+p_{4}^{9}-p_{4}^{10}+p_{3}^{12}-p_{3}^{13}+p_{4}^{11}-p_{4}^{12}+p_{3}^{14}-p_{3}^{15}+p_{2}^{10}-p_{2}^{11}+p_{3}^{16}-p_{3}^{17}+p_{4}^{14}-p_{4}^{15}+p_{3}^{18}-p_{3}^{19}+p_{2}^{13}-p_{2}^{14}\\
&&+p_{3}^{20}-p_{3}^{21}+p_{4}^{17}-p_{4}^{18}+p_{3}^{22}-p_{3}^{23}+p_{2}^{16}-p_{2}^{17}+p_{3}^{24}-p_{3}^{25}+p_{2}^{18}-p_{2}^{19}+p_{3}^{26}-p_{3}^{27}+p_{4}^{21}-p_{4}^{22},\\
P_8^4&=&-p_{4}^{1}+p_{5}^{1}-p_{5}^{2}+p_{4}^{2}-p_{4}^{3}+p_{3}^{3}-p_{3}^{4}+p_{0}^{2}-p_{0}^{3}+p_{3}^{5}-p_{3}^{6}+p_{2}^{4}-p_{2}^{5}+p_{3}^{7}-p_{3}^{8}+p_{0}^{4}-p_{0}^{5}+p_{3}^{9}-p_{3}^{10}+p_{4}^{8}-p_{4}^{9}\\
&&+p_{5}^{7}-p_{5}^{8}+p_{4}^{10}-p_{4}^{11}+p_{5}^{9}-p_{5}^{10}+p_{4}^{12}-p_{4}^{13}+p_{3}^{15}-p_{3}^{16}+p_{0}^{8}-p_{0}^{9}+p_{3}^{17}-p_{3}^{18}+p_{2}^{12}-p_{2}^{13}+p_{1}^{7}-p_{1}^{8}+p_{2}^{14}\\
&&-p_{2}^{15}+p_{3}^{21}-p_{3}^{22}+p_{0}^{11}-p_{0}^{12}+p_{3}^{23}-p_{3}^{24}+p_{4}^{19}-p_{4}^{20}+p_{3}^{25}-p_{3}^{26}+p_{0}^{13}-p_{0}^{14}+p_{3}^{27}-p_{3}^{28}+p_{4}^{22}-p_{4}^{23},\\
P_8^5&=&-p_{5}^{1}+p_{6}^{1}-p_{6}^{2}+p_{5}^{2}-p_{5}^{3}+p_{4}^{3}-p_{4}^{4}+p_{3}^{4}-p_{3}^{5}+p_{2}^{3}-p_{2}^{4}+p_{1}^{2}-p_{1}^{3}+p_{2}^{5}-p_{2}^{6}+p_{3}^{8}-p_{3}^{9}+p_{4}^{7}-p_{4}^{8}+p_{5}^{6}-p_{5}^{7}\\
&&+p_{6}^{5}-p_{6}^{6}+p_{5}^{8}-p_{5}^{9}+p_{6}^{7}-p_{6}^{8}+p_{5}^{10}-p_{5}^{11}+p_{4}^{13}-p_{4}^{14}+p_{3}^{16}-p_{3}^{17}+p_{2}^{11}-p_{2}^{12}+p_{1}^{6}-p_{1}^{7},\\
P_8^6&=&-p_6^1+p_7^1-p_7^2+p_6^2-p_6^4+p_5^3-p_5^5+p_4^4-p_4^6+p_3^5-p_3^7+p_2^4-p_2^5+p_0^3-p_0^4+p_3^6-p_3^8+p_4^5-p_4^7+p_5^4-p_5^6\\
&&+p_6^3-p_6^5+p_6^6-p_6^7.\\
\end{eqnarray*}
}
Then we have
{\tiny
\begin{eqnarray*}
E_0&=&[u_{0}^{1}-u_{3}^{1}]e(-p_{0}^{1})+[u_{3}^{2}-u_{2}^{1}]e(P_8^0(...p_{3}^{2}))+[u_{2}^{2}-u_{1}^{1}]e(P_8^0(...p_{2}^{2}))+[u_{1}^{2}-u_{2}^{3}]e(P_8^0(...p_{1}^{2}))\\
&&+[u_{2}^{4}-u_{3}^{5}]e(P_8^0(...p_{2}^{4}))+[u_{3}^{6}-u_{0}^{3}]e(P_8^0(...p_{3}^{6}))+[u_{0}^{4}-u_{3}^{7}]e(P_8^0(...p_{0}^{4}))+[u_{3}^{8}-u_{2}^{5}]e(P_8^0(...p_{3}^{8}))\\
&&+[u_{2}^{6}-u_{1}^{3}]e(P_8^0(...p_{2}^{6}))+[u_{1}^{4}-u_{2}^{7}]e(P_8^0(...p_{1}^{4}))+[u_{2}^{8}-u_{3}^{11}]e(P_8^0(...p_{2}^{8}))+[u_{3}^{12}-u_{0}^{6}]e(P_8^0(...p_{3}^{12}))\\
&&+[u_{0}^{7}-u_{3}^{13}]e(P_8^0(...p_{0}^{7}))+[u_{3}^{14}-u_{2}^{9}]e(P_8^0(...p_{3}^{14}))+[u_{2}^{10}-u_{1}^{5}]e(P_8^0(...p_{2}^{10}))+[u_{1}^{6}-u_{2}^{11}]e(P_8^0(...p_{1}^{6}))\\
&&+[u_{2}^{12}-u_{3}^{17}]e(P_8^0(...p_{2}^{12}))+[u_{3}^{18}-u_{0}^{9}]e(P_8^0(...p_{3}^{18}))+[u_{0}^{10}-u_{3}^{19}]e(P_8^0(...p_{0}^{10}))+[u_{3}^{20}-u_{4}^{16}]e(P_8^0(...p_{3}^{20}))\\
&&+[u_{4}^{17}-u_{5}^{13}]e(P_8^0(...p_{4}^{17}))+[u_{5}^{14}-u_{4}^{18}]e(P_8^0(...p_{5}^{14}))+[u_{4}^{19}-u_{3}^{23}]e(P_8^0(...p_{4}^{19}))+[u_{3}^{24}-u_{0}^{12}]e(P_8^0(...p_{3}^{24}))\\
&&+[u_{0}^{13}-u_{3}^{25}]e(P_8^0(...p_{0}^{13}))+[u_{3}^{26}-u_{4}^{20}]e(P_8^0(...p_{3}^{26}))+[u_{4}^{21}]e(P_8^0),\\
E_1&=&[u_{1}^{1}-u_{2}^{1}]e(-p_{1}^{1})+[u_{2}^{2}-u_{3}^{2}]e(P_8^1(...p_{2}^{2}))+[u_{3}^{3}-u_{4}^{2}]e(P_8^1(...p_{3}^{3}))+[u_{4}^{3}-u_{5}^{2}]e(P_8^1(...p_{4}^{3}))\\
&&+[u_{5}^{3}-u_{6}^{2}]e(P_8^1(...p_{5}^{3}))+[u_{6}^{3}-u_{5}^{4}]e(P_8^1(...p_{6}^{3}))+[u_{5}^{5}-u_{6}^{4}]e(P_8^1(...p_{5}^{5}))+[u_{6}^{5}-u_{5}^{6}]e(P_8^1(...p_{6}^{5}))\\
&&+[u_{5}^{7}-u_{4}^{8}]e(P_8^1(...p_{5}^{7}))+[u_{4}^{9}-u_{3}^{10}]e(P_8^1(...p_{4}^{9}))+[u_{3}^{11}-u_{2}^{7}]e(P_8^1(...p_{3}^{11}))+[u_{2}^{8}-u_{1}^{4}]e(P_8^1(...p_{2}^{8}))\\
&&+[u_{1}^{5}-u_{2}^{9}]e(P_8^1(...p_{1}^{5}))+[u_{2}^{10}-u_{3}^{14}]e(P_8^1(...p_{2}^{10}))+[u_{3}^{15}-u_{4}^{12}]e(P_8^1(...p_{3}^{15}))+[u_{4}^{13}-u_{5}^{10}]e(P_8^1(...p_{4}^{13}))\\
&&+[u_{5}^{11}-u_{6}^{8}]e(P_8^1(...p_{5}^{11}))+[u_{6}^{9}-u_{5}^{12}]e(P_8^1(...p_{6}^{9}))+[u_{5}^{13}-u_{4}^{16}]e(P_8^1(...p_{5}^{13}))+[u_{4}^{17}-u_{3}^{20}]e(P_8^1(...p_{4}^{17}))\\
&&+[u_{3}^{21}-u_{2}^{14}]e(P_8^1(...p_{3}^{21}))+[u_{2}^{15}-u_{1}^{8}]e(P_8^1(...p_{2}^{15}))+[u_{1}^{9}]e(P_8^1),\\
E_2&=&
[u_{2}^{1}-u_{3}^{1}]e(-p_{2}^{1})+[u_{3}^{2}-u_{0}^{1}]e(P_8^2(...p_{3}^{2}))+[u_{0}^{2}-u_{3}^{3}]e(P_8^2(...p_{0}^{2}))+[u_{3}^{4}-u_{4}^{3}]e(P_8^2(...p_{3}^{4}))\\
&&+[u_{4}^{4}-u_{5}^{3}]e(P_8^2(...p_{4}^{4}))+[u_{5}^{4}-u_{4}^{5}]e(P_8^2(...p_{5}^{4}))+[u_{4}^{6}-u_{5}^{5}]e(P_8^2(...p_{4}^{6}))+[u_{5}^{6}-u_{4}^{7}]e(P_8^2(...p_{5}^{6}))\\
&&+[u_{4}^{8}-u_{3}^{9}]e(P_8^2(...p_{4}^{8}))+[u_{3}^{10}-u_{0}^{5}]e(P_8^2(...p_{3}^{10}))+[u_{0}^{6}-u_{3}^{11}]e(P_8^2(...p_{0}^{6}))+[u_{3}^{12}-u_{2}^{8}]e(P_8^2(...p_{3}^{12}))\\
&&+[u_{2}^{9}-u_{3}^{13}]e(P_8^2(...p_{2}^{9}))+[u_{3}^{14}-u_{0}^{7}]e(P_8^2(...p_{3}^{14}))+[u_{0}^{8}-u_{3}^{15}]e(P_8^2(...p_{0}^{8}))+[u_{3}^{16}-u_{4}^{13}]e(P_8^2(...p_{3}^{16}))\\
&&+[u_{4}^{14}-u_{5}^{11}]e(P_8^2(...p_{4}^{14}))+[u_{5}^{12}-u_{4}^{15}]e(P_8^2(...p_{5}^{12}))+[u_{4}^{16}-u_{3}^{19}]e(P_8^2(...p_{4}^{16}))+[u_{3}^{20}-u_{0}^{10}]e(P_8^2(...p_{3}^{20}))\\
&&+[u_{0}^{11}-u_{3}^{21}]e(P_8^2(...p_{0}^{11}))+[u_{3}^{22}-u_{2}^{15}]e(P_8^2(...p_{3}^{22}))+[u_{2}^{16}-u_{1}^{9}]e(P_8^2(...p_{2}^{16}))+[u_{1}^{10}-u_{2}^{17}]e(P_8^2(...p_{1}^{10}))\\
&&+[u_{2}^{18}]e(P_8^2),\\
E_3&=&[u_{3}^{1}-u_{4}^{1}]e(-p_{3}^{1})+[u_{4}^{2}-u_{3}^{2}]e(P_8^3(...p_{4}^{2}))+[u_{3}^{3}-u_{2}^{2}]e(P_8^3(...p_{3}^{3}))+[u_{2}^{3}-u_{3}^{4}]e(P_8^3(...p_{2}^{3}))\\
&&+[u_{3}^{5}-u_{4}^{4}]e(P_8^3(...p_{3}^{5}))+[u_{4}^{5}-u_{3}^{6}]e(P_8^3(...p_{4}^{5}))+[u_{3}^{7}-u_{4}^{6}]e(P_8^3(...p_{3}^{7}))+[u_{4}^{7}-u_{3}^{8}]e(P_8^3(...p_{4}^{7}))\\
&&+[u_{3}^{9}-u_{2}^{6}]e(P_8^3(...p_{3}^{9}))+[u_{2}^{7}-u_{3}^{10}]e(P_8^3(...p_{2}^{7}))+[u_{3}^{11}-u_{4}^{9}]e(P_8^3(...p_{3}^{11}))+[u_{4}^{10}-u_{3}^{12}]e(P_8^3(...p_{4}^{10}))\\
&&+[u_{3}^{13}-u_{4}^{11}]e(P_8^3(...p_{3}^{13}))+[u_{4}^{12}-u_{3}^{14}]e(P_8^3(...p_{4}^{12}))+[u_{3}^{15}-u_{2}^{10}]e(P_8^3(...p_{3}^{15}))+[u_{2}^{11}-u_{3}^{16}]e(P_8^3(...p_{2}^{11}))\\
&&+[u_{3}^{17}-u_{4}^{14}]e(P_8^3(...p_{3}^{17}))+[u_{4}^{15}-u_{3}^{18}]e(P_8^3(...p_{4}^{15}))+[u_{3}^{19}-u_{2}^{13}]e(P_8^3(...p_{3}^{19}))+[u_{2}^{14}-u_{3}^{20}]e(P_8^3(...p_{2}^{14}))\\
&&+[u_{3}^{21}-u_{4}^{17}]e(P_8^3(...p_{3}^{21}))+[u_{4}^{18}-u_{3}^{22}]e(P_8^3(...p_{4}^{18}))+[u_{3}^{23}-u_{2}^{16}]e(P_8^3(...p_{3}^{23}))+[u_{2}^{17}-u_{3}^{24}]e(P_8^3(...p_{2}^{17}))\\
&&+[u_{3}^{25}-u_{2}^{18}]e(P_8^3(...p_{3}^{25}))+[u_{2}^{19}-u_{3}^{26}]e(P_8^3(...p_{2}^{19}))+[u_{3}^{27}-u_{4}^{21}]e(P_8^3(...p_{3}^{27}))+[u_{4}^{22}]e(P_8^3),\\
E_4&=&[u_{4}^{1}-u_{5}^{1}]e(-p_{4}^{1})+[u_{5}^{2}-u_{4}^{2}]e(P_8^4(...p_{5}^{2}))+[u_{4}^{3}-u_{3}^{3}]e(P_8^4(...p_{4}^{3}))+[u_{3}^{4}-u_{0}^{2}]e(P_8^4(...p_{3}^{4}))\\
&&+[u_{0}^{3}-u_{3}^{5}]e(P_8^4(...p_{0}^{3}))+[u_{3}^{6}-u_{2}^{4}]e(P_8^4(...p_{3}^{6}))+[u_{2}^{5}-u_{3}^{7}]e(P_8^4(...p_{2}^{5}))+[u_{3}^{8}-u_{0}^{4}]e(P_8^4(...p_{3}^{8}))\\
&&+[u_{0}^{5}-u_{3}^{9}]e(P_8^4(...p_{0}^{5}))+[u_{3}^{10}-u_{4}^{8}]e(P_8^4(...p_{3}^{10}))+[u_{4}^{9}-u_{5}^{7}]e(P_8^4(...p_{4}^{9}))+[u_{5}^{8}-u_{4}^{10}]e(P_8^4(...p_{5}^{8}))\\
&&+[u_{4}^{11}-u_{5}^{9}]e(P_8^4(...p_{4}^{11}))+[u_{5}^{10}-u_{4}^{12}]e(P_8^4(...p_{5}^{10}))+[u_{4}^{13}-u_{3}^{15}]e(P_8^4(...p_{4}^{13}))+[u_{3}^{16}-u_{0}^{8}]e(P_8^4(...p_{3}^{16}))\\
&&+[u_{0}^{9}-u_{3}^{17}]e(P_8^4(...p_{0}^{9}))+[u_{3}^{18}-u_{2}^{12}]e(P_8^4(...p_{3}^{18}))+[u_{2}^{13}-u_{1}^{7}]e(P_8^4(...p_{2}^{13}))+[u_{1}^{8}-u_{2}^{14}]e(P_8^4(...p_{1}^{8}))\\
&&+[u_{2}^{15}-u_{3}^{21}]e(P_8^4(...p_{2}^{15}))+[u_{3}^{22}-u_{0}^{11}]e(P_8^4(...p_{3}^{22}))+[u_{0}^{12}-u_{3}^{23}]e(P_8^4(...p_{0}^{12}))+[u_{3}^{24}-u_{4}^{19}]e(P_8^4(...p_{3}^{24}))\\
&&+[u_{4}^{20}-u_{3}^{25}]e(P_8^4(...p_{4}^{20}))+[u_{3}^{26}-u_{0}^{13}]e(P_8^4(...p_{3}^{26}))+[u_{0}^{14}-u_{3}^{27}]e(P_8^4(...p_{0}^{14}))+[u_{3}^{28}-u_{4}^{22}]e(P_8^4(...p_{3}^{28}))\\
&&+[u_{4}^{23}]e(P_8^4),\\
E_5&=&[u_{5}^{1}-u_{6}^{1}]e(-p_{5}^{1})+[u_{6}^{2}-u_{5}^{2}]e(P_8^5(...p_{6}^{2}))+[u_{5}^{3}-u_{4}^{3}]e(P_8^5(...p_{5}^{3}))+[u_{4}^{4}-u_{3}^{4}]e(P_8^5(...p_{4}^{4}))\\
&&+[u_{3}^{5}-u_{2}^{3}]e(P_8^5(...p_{3}^{5}))+[u_{2}^{4}-u_{1}^{2}]e(P_8^5(...p_{2}^{4}))+[u_{1}^{3}-u_{2}^{5}]e(P_8^5(...p_{1}^{3}))+[u_{2}^{6}-u_{3}^{8}]e(P_8^5(...p_{2}^{6}))\\
&&+[u_{3}^{9}-u_{4}^{7}]e(P_8^5(...p_{3}^{9}))+[u_{4}^{8}-u_{5}^{6}]e(P_8^5(...p_{4}^{8}))+[u_{5}^{7}-u_{6}^{5}]e(P_8^5(...p_{5}^{7}))+[u_{6}^{6}-u_{5}^{8}]e(P_8^5(...p_{6}^{6}))\\
&&+[u_{5}^{9}-u_{6}^{7}]e(P_8^5(...p_{5}^{9}))+[u_{6}^{8}-u_{5}^{10}]e(P_8^5(...p_{6}^{8}))+[u_{5}^{11}-u_{4}^{13}]e(P_8^5(...p_{5}^{11}))+[u_{4}^{14}-u_{3}^{16}]e(P_8^5(...p_{4}^{14}))\\
&&+[u_{3}^{17}-u_{2}^{11}]e(P_8^5(...p_{3}^{17}))+[u_{2}^{12}-u_{1}^{6}]e(P_8^5(...p_{2}^{12}))+[u_{1}^{7}]e(P_8^5),\\
E_6&=&
[u_{6}^{1}-u_{7}^{1}]e(-p_{6}^{1})+[u_{7}^{2}-u_{6}^{3}-u_{6}^{2}]e(P_8^6(...p_7^2))+[u_{6}^{4}+u_{6}^{3}-u_{5}^{4}-u_{5}^{3}]e(P_8^6(...p_6^4))\\
&&+[u_{6}^{4}-u_{6}^{2}]e(P_8^6(...p_6^4)+p_6^3-p_6^2)+[u_{5}^{5}+u_{5}^{4}-u_{4}^{5}-u_{4}^{4}]e(P_8^6(...p_5^5))+[u_{5}^{5}-u_{5}^{3}]e(P_8^6(...p_5^5)+p_5^4-p_5^3)\\
&&+[u_{4}^{6}+u_{4}^{5}-u_{3}^{6}-u_{3}^{5}]e(P_8^6(...p_4^6))+[u_{4}^{6}-u_{4}^{4}]e(P_8^6(...p_4^6)+p_4^5-p_4^4)+[u_{3}^{7}+u_{3}^{6}-u_{2}^{4}-u_{0}^{3}]e(P_8^6(...p_3^7))\\
&&+[u_{3}^{7}-u_{3}^{5}]e(P_8^6(...p_3^7)+p_3^6-p_3^5)+[u_{2}^{5}-u_{0}^{3}]e(P_8^6(...p_2^5))+[u_{0}^{4}+u_{2}^{5}-u_{3}^{7}-u_{3}^{6}]e(P_8^6(...p_0^4))\\
&&+[u_{0}^{4}-u_{2}^{4}]e(P_8^6(...p_0^4)+p_2^5-p_2^4)+[u_3^8+u_{3}^{7}-u_{4}^{6}-u_{4}^{5}]e(P_8^6(...p_3^8))+[u_{3}^{8}-u_{3}^{6}]e(P_8^6(...p_3^8)+p_3^7-p_3^6)\\
&&+[u_{4}^{7}+u_{4}^{6}-u_{5}^{5}-u_{5}^{4}]e(P_8^6(...p_4^7))+[u_{4}^{7}-u_{4}^{5}]e(P_8^6(...p_4^7)+p_4^6-p_4^5)+[u_{5}^{6}+u_{5}^{5}-u_{6}^{4}-u_{6}^{3}]e(P_8^6(...p_5^6))\\
&&+[u_{5}^{6}-u_{5}^{4}]e(P_8^6(...p_5^6)+p_5^5-p_5^4)+[u_{6}^{5}+u_{6}^{4}-u_{7}^{2}]e(P_8^6(...p_6^5))+[u_{6}^{5}-u_{6}^{3}]e(P_8^6(...p_6^5)+p_6^4-p_6^3)\\
&&+[u_{7}^{3}-u_{6}^{6}]e(P_8^6(...p_6^5)+p_7^2-p_7^3)+[u_{6}^{7}]e(P_8^6+p_7^2-p_7^3),\\
E_7&=&[u_{7}^{1}]e(-p_{7}^{1}).\\
\end{eqnarray*}
}
Note that the action of the generator $E_6$ in this case is quite different from the other operators.
\end{appendices}


\begin{thebibliography}{99}
\bibitem{BZ}
A. Berenstein, A. Zelevinsky,
\textit{Quantum cluster algebra},
Adv. in Math., \textbf{195}(2), 405-455, (2005)
\bibitem{D}
V. G. Drinfeld,
\textit{Hopf algebras and the quantum Yang-Baxter equation},
Doklady Akademii Nauk SSSR, \textbf{283} (5), 1060-1064, (1985)
\bibitem{EGGHS}
P. Etingof, V. Ginzburg, N. Guay, D. Hernandez, A. Savage,
\textit{Twenty-five years of representation theory of quantum groups},
Final reports, BIRS, Banff, (2011)
 \bibitem{Fa}
L. D. Faddeev, 
\textit{Modular double of a quantum group}, 
Quantization, deformation, and symmetries, Conf\'{e}rence Mosh\'{e} Flato 1999, v. I, Math. Phys. Stud., 21, Kluwer Acad. Publ., Dordrecht, 149–156, (2000)
\bibitem{FKa}
  L.D. Faddeev, R.M. Kashaev,
  \textit{Quantum dilogarithm},
  Modern Phys. Lett. \textbf{A9}, 427-434, (1994)
\bibitem{FG}
V. Fock, A. Goncharov,
\textit{The quantum dilogarithm and representations of quantum cluster varieties},
Invent. Math. \textbf{175}(2), 223-286, (2009)
  \bibitem{FI}
  I. B. Frenkel, I. Ip,
  \textit{Positive representations of split real quantum groups and future perspectives},
  Int. Math. Res. Not. doi: \textbf{2014} (8), 2126-2164, (2014)
\bibitem{GKLO}
A. Gerasimov, S. Kharchev, D. Lebedev, S. Oblezin, \textit{On a class of representations of quantum groups},
Contemp. Math., \textbf{391}, 101-110, (2005)
\bibitem{GS}
A. Goncharov, L. Shen,
\textit{Quantum geometry of moduli spaces of local systems and representation theory},
arXiv:1904.10491, (2019)
\bibitem{Hall}
B. C. Hall, \textit{Quantum theory for mathematicians}, 
Graduate Texts in Mathematics, \textbf{267}, Springer, New York, (2013)
 \bibitem{Ip}
  I. Ip,
  \textit{Representation of the quantum plane, its quantum double and harmonic analysis on $GL_q^+(2,R)$},  Selecta Mathematica New Series, \textbf{19} (4), 987–1082, (2013)
\bibitem{Ip1}
I. Ip,
\textit{Positive representations, multiplier Hopf algebra, and continuous canonical basis},
"String theory, integrable systems and representation theory", Proceedings of 2013 RIMS Conference, RIMS Kokyuroku Bessatsu \textbf{B62}, 71-86, (2017)
\bibitem{Ip2}
I. Ip,
\textit{Gauss-Lusztig decomposition of $GL_q^+(n,\R)$ and representations by $q$-tori},
Journal of Pure and Applied Algebra, \textbf{219} (12), 5650-5672, (2015)
\bibitem{Ip3}
I. Ip,
\textit{Positive representations and harmonic analysis of split real quantum groups},
Ph.D. Thesis, Yale University (2012)
\bibitem{Ip4}
I. Ip,
\textit{Positive representations of non-simply-laced split real quantum groups},
Journal of Algebra, \textbf{425}, 245-276, (2015)
\bibitem{Ip5}
I. Ip,
\textit{Positive representations of split real quantum groups: The universal $R$ operator},
 Int. Math. Res. Not. \textbf{2015} (1), 240-287, (2015)
\bibitem{Ip6}
I. Ip,
\textit{On tensor products of decomposition of positive representations of $\cU_{q\til[q]}(\sl(2,\R))$},
arXiv: 1511.07970, (2015)
 \bibitem{Ip7}
I. Ip,
\textit{Positive Casimir and central characters of split real quantum groups},
Commun. Math. Phys., \textbf{344} (3), 857-888, (2016)
\bibitem{Ip8}
I. Ip,
\textit{Cluster realization of $\cU_q(\g)$ and factorization of universal $R$-matrix},
Sel. Math. New Ser. \textbf{24}(5), 4461-4553, (2018)
\bibitem{J}
M. Jimbo,
\textit{A $q$-difference analogue of $\cU(\g)$ and the Yang-Baxter equation},
Lett. Math. Phys., \textbf{10}, 63-69, (1985)
\bibitem{Kn}
A. Knapp,
\textit{Representation theory of semisimple groups: An overview based on examples (Reprint of the 1986 original)}, 
Princeton Landmarks in Mathematics, Princeton, NJ: Princeton University Press (2001)
\bibitem{Li}
P. Littelmann,
\textit{Cones, crystals, and patterns},
Transformation groups \textbf{3}(2), 145-179, (1998)
\bibitem{Lu1}
G. Lusztig, 
\textit{Finite dimensional Hopf algebras arising from quantized universal enveloping algebras}, 
J. Amer. Math. Soc. \textbf{3} (1990), 257-296. 
\bibitem{Lu}
G. Lusztig,
\textit{Total positivity in reductive groups},
 in: ``Lie theory and geometry: in honor of B. Kostant",  Progr. in Math., \textbf{123}, Birkhauser, 531-568, (1994)
\bibitem{PT1}
  B. Ponsot, J. Teschner,
  \textit{Liouville bootstrap via harmonic analysis on a noncompact quantum group},
  arXiv: hep-th/9911110, (1999)
\bibitem{PT2}
  B. Ponsot, J. Teschner,
  \textit{Clebsch-Gordan and Racah-Wigner coefficients for a continuous series of representations of $\cU_q(\mathfrak{sl}(2,\R))$},
  Comm. Math. Phys., \textbf{224}, 613-655, (2001)
\bibitem{R}
M. Reineke, 
\textit{Feigin's map and monomial bases for quantized enveloping algebras},
Mathematische Zeitschrift \textbf{237} (3) 639-667, (2001)
\bibitem{SS}
G. Schrader, A. Shapiro,
\textit{Continuous tensor categories from quantum groups I: algebraic aspects},
arXiv:1708.08107 (2017)
\bibitem{T}
T. Timmermann,
\textit{An invitation to quantum groups and duality},
EMS Textbooks in Mathematics, (2008)
\bibitem{Wo}
S. L. Woronowicz,
\textit{$C^*$-algebras generated by unbounded elements},
Rev. Math. Phys. \textbf{7}(3) (1995), 481-521.
\end{thebibliography}
\end{document}